\documentclass[a4paper, 11pt]{article}
\usepackage{anysize}
\marginsize{3,5cm}{2,5cm}{2,5cm}{2,5cm}
\usepackage{amssymb}
\usepackage{amsmath,amsbsy,amssymb,amscd}
\usepackage{t1enc}\pagestyle{myheadings}
\usepackage[cp1250]{inputenc}
\usepackage[british]{babel}
\usepackage[all]{xy}
\usepackage{color}
\usepackage{amsfonts}
\usepackage{latexsym}
\usepackage{amsthm}
\usepackage{mathrsfs}
\usepackage{hyperref}
\usepackage{graphicx}
\usepackage{comment}

\usepackage{color}
\usepackage{caption}
\usepackage{subcaption}
\usepackage{enumerate}
\usepackage{xcolor}
\usepackage[draft]{todonotes}
\usepackage{float}

\definecolor{orange}{rgb}{1,0.5,0}




\usepackage{mathrsfs} 

\DeclareMathAlphabet{\mathpzc}{OT1}{pzc}{L}{it} 




\theoremstyle{definition}
\newtheorem{definition}{Definition}[section]
\newtheorem{theorem}[definition]{Theorem}

\newtheorem{corollary}[definition]{Corollary}
\newtheorem{lemma}[definition]{Lemma}

\newtheorem{remark}[definition]{Remark}

\newtheorem{claim}[definition]{Claim}
\newtheorem*{lm:mainLemma}{Theorem \ref{lm:mainLemma}}

\def\geq{\geqslant}
\def\leq{\leqslant}
\def\R{\mathbb{R}}

\def\id{\mathrm{id}}

\def\epsilon{\varepsilon}

\newcommand{\bea}{\begin{eqnarray}}
  \newcommand{\eea}{\end{eqnarray}}
  \newcommand{\beab}{\begin{eqnarray*}}
  \newcommand{\eeab}{\end{eqnarray*}}

  \newcommand{\be}{\begin{equation}}
  \newcommand{\ee}{\end{equation}}

\title{Kakutani equivalence for products of some special flows over rotations\\
\scriptsize\emph{To the memory of my dearest teacher Prof. Anatole Katok whose careful guidance and warm advice will always stay in my heart.} }
\author{Daren Wei\footnote{Einstein Institute of Mathematics, The Hebrew University of Jerusalem, Givat Ram. Jerusalem, 9190401, Israel, E-mail: Daren.Wei@mail.huji.ac.il  D. W. was partially supported by the NSF grant DMS-16-02409 and ERC-2018-ADG project HomDyn.}}

\setlength{\marginparwidth }{2 cm}
\begin{document}
\maketitle
\begin{abstract}
We study Kakutani equivalence for products of some special flows over rotations with roof function smooth except a singularity at $0\in\mathbb{T}$. We estimate the Kakutani invariant for product of these flows with different powers of singularities and rotations from a full measure set. As a corollary, we obtain a countable family of pairwise non-Kakutani equivalent products of special flows over rotations.
\end{abstract}

\section{Introduction}

One of the central problems in ergodic theory is the classification of ergodic m.p.t. (measure preserving transformations). The first attempt was made by considering the \emph{measurably isomorphism equivalence}: two m.p.t. are measurably isomorphic if there is a measure preserving one-to-one conjugate map between them. But due to \cite{BFor}, \cite{FWeiss} and \cite{ForRudWeiss}, this problem is too complex in general. A possible solution to obtain a complete answer for the classification problem of ergodic m.p.t. is to consider some weaker equivalence relations than measurably isomorphism equivalence. An \emph{orbit equivalence} is a natural candidate that has this feature: two m.p.t are called orbit equivalent if there exists a measure preserving invertible map between their orbits (i.e. map orbits as sets and does not preserve the order). However, this equivalence relation is trivial among ergodic m.p.t. due to Dye's theory \cite{Dye1}, \cite{Dye2}. Around 1940s, S. Kakutani \cite{Kak} introduced a new equivalence relation: for ergodic m.p.t. $T$ defined on $X$ and $S$ defined on $Y$, they are \emph{Kakutani equivalent} if there exist $A\subset X$ and $B\subset Y$ such that $T|_A$ and $S|_B$ are measurably isomorphic (see Definition \ref{def:Kakutani} for more details). As Kakutani equivalence is weaker than isomorphism equivalence but stronger than orbit equivalence, it is natural to consider classification of ergodic m.p.t. based on this equivalence relation.

In order to measure the complexity among Kakutani equivalence classes, A. Katok \cite{Katok1}, \cite{Katok2} established a binary relation, which is defined as \emph{majorization}: an automorphism $T$ majorizes an automorphism $S$ if there exists an automorphism $T_1$ which is Kakutani equivalent to $T$ and an invariant partition $\xi$ such that $T_1|_{\xi}$ is Kakutani equivalent to $S$. This binary relation introduces a quasi-partial order (without antisymmetry) structure among Kakutani equivalence classes. Among all the Kakutani equivalence classes, there is a special Kakutani equivalence class which defined as \emph{standard automorphism}\footnote{Also called as zero entropy loosely Bernoulli or loosely Kronecker.}: an automorphism is called standard if it is Kakutani equivalent to an irrational rotation. Standard automorphisms are special as they are majorized by any ergodic automorphisms (\cite{Katok2}) and thus can be understood as the Kakutani equivalence class which has the minimal complexity. Standardness can also be defined for flows:  a flow is a \emph{standard flow} if it is Kakutani equivalent to a linear flow $R_t(x,y)=(x+t,y+t\alpha)$ on $\mathbb{T}^2$ for $\alpha\in\mathbb{R}\setminus\mathbb{Q}$.

Later, inspired by the idea that metric entropy is an invariant of isomorphism equivalence, M. Ratner \cite{Ratner3}, \cite{Ratner4} created an invariant of Kakutani equivalence similar to the metric entropy and called it \emph{Kakutani invariant} and denoted by $e(T_t,u)$, where $T_t$ is the flow and $u$ is the scaling function (see Section \ref{sec:KakutaniInvariant} for more details). As metric entropy measures the complexity of automorphisms among isomorphic equivalence classes, the Kakutani invariant also measures the complexity of automorphisms among the Kakutani equivalence classes. Moreover, the way that the Kakutani invariant measures the complexity coincides with majorization in some sense \cite{Ratner3}: an automorphism has zero Kakutani invariant at all scales if and only if it is a standard automorphism.

In fact, standardness is a quite stable property as it is closed under factors, inverse limits and compact extensions \cite{Katok2}, \cite{OrnRudWei}, \cite{Bron}. Moreover, there are many natural examples of standard systems: all systems of local rank one \cite{Ferenczi}, all distal systems, and in particular, all nil-systems. Kakutani originally even conjectured that all zero entropy systems were standard\footnote{Although he did not use this terminology.} \cite{Kak} although this is not the case. The first such examples were constructed by the cutting and stacking method by J. Feldman \cite{Feldman} and the first non-standard smooth examples on smooth manifolds were constructed in \cite{Katok2} by A. Katok. Later, D. S. Ornstein, B. Weiss and D. Rudolph also constructed some other non-standard examples in \cite{OrnRudWei}.  In \cite{Ratner1}, M. Ratner gave a natural algebraic example of a non-standard system in dimension $6$ in the smooth category. More precisely, Ratner showed that the product of the horocycle flow on compact homogeneous space is not standard (the horocycle flow itself being standard, \cite{Ratner2}). In \cite{KanigowskiWei1}, Kanigowski and the author proved non-standardness of product of Kochergin flows with different exponents and thus gave natural examples of non-standard smooth flows in dimension $4$. It is worth to notice that there are examples of even smooth zero entropy diffeomorphisms on any compact manifold admitting an effective smooth $\mathbb{T}^2$-action whose Cartesian product of itself is loosely Bernoulli \cite{GK}. In \cite{KanigowskiVinhageWei2}, Kanigowski, Vinhage and the author showed the only ergodic unipotent flows on finite volume quotients of linear semisimple Lie groups which are standard are of the form $\phi_t=\begin{pmatrix}1&t\\0&1\end{pmatrix}\times \id$ acting on $(\operatorname{SL}(2,\R)\times G')\slash \Gamma$,
where $\Gamma$ is irreducible. This result also provide many non-standard algebraic flows on homogeneous space as it fully describes the structure of standard unipotent flows on homogeneous space. Recently, Kanigowski and De la Rue \cite{KanigowskiDeLaRue} showed non-standardness of product of two staircase rank one transformations, which provided a non-standard example among products of natural class of mixing rank one transformations.

The aim of this paper is to study the Kakutani invariant of products of some special flows: special flows over irrational rotations with one degenerate fixed point and roof function in the form $x^{\gamma}$ for $\gamma\in(-1,0)$. These types of special flows are related to special flows coming from smooth $\mathbb{T}^2$ flows: Kochergin flows are a special case of these types of special flow with exponents $\gamma$ in the form $-(1-\frac{1}{n})$ for integers $n\geq2$. Moreover, these flows are similar to horocycle flows as they are standard, mixing \cite{Koch}, some are mixing of all orders \cite{KanigowshiFayad}, almost every have countable Lebesgue spectrum \cite{FFK} and polynomial orbit growth \cite{Kanigowski}. In this paper, we estimate the Kakutani invariant of products of these flows with different exponents and thus obtain a countable family of pairwise non-Kakutani equivalent products of non-smooth special flows over rotations.

\subsection{Statement of Main Results}

In order to state our main results  more precisely, we will introduce several notation. The special flows under consideration are flows over irrational rotations $T_{\alpha}$, $\alpha\in(\mathbb{R}\backslash\mathbb{Q})\cap(\mathbb{R}/\mathbb{Z})$, with roof functions in $C^2(\mathbb{T}\backslash\{0\})$ which have similar asymptotic behavior around $0$ as $x^{\gamma}$, $-1<\gamma<0$ (see Section \ref{sec:KocherginFlows} for a precise definition). In this setting, every such flow is given by a pair $(\alpha,\gamma)\in[(\mathbb{R}\backslash\mathbb{Q})\cap(\mathbb{R}/\mathbb{Z})]\times(-1,0)$ and thus we denote such special flow by $(\mathscr{T}^{\alpha,\gamma}_t)$. Let $(q_{\alpha,n})_{n\geq 1}$ represent the sequence consisting of denominators of convergents of continued fractions of an irrational number $\alpha$ and
\begin{equation}\label{def:setD}
\mathscr{D}:=\{\alpha\in(\mathbb{R}\backslash\mathbb{Q})\cap(\mathbb{R}/\mathbb{Z}):q_{\alpha,n+1}<C(\alpha)q_{\alpha,n}\log q_{\alpha,n}(\log n)^2\},
\end{equation}
it follows from Khinchin's theorem [\cite{Kinchin}, Theorem $30$, p.$63$] that $\lambda(\mathscr{D})=1$, where $\lambda$ is the Lebesgue measure on $\mathbb{T}$. Besides the full measure property, we would also like to mention another very useful feature of the set $\mathscr{D}$. Notice that for any $\alpha\in\mathscr{D}$ and any $N\in\mathbb{N}$, there exists an integer $n$ such that $N\in[q_{\alpha,n},q_{\alpha,n+1})$. By the definition of $\mathscr{D}$, i.e. \eqref{def:setD}, there exists $N_{\alpha}\in\mathbb{N}$ such that if $N\geq N_{\alpha}$, then $\log q_{\alpha,n}>\max\{1000,C(\alpha)\}$, where $C(\alpha)$ is from the definition of set $\mathscr{D}$. Combining the definition of set $\mathscr{D}$, the definition of the best approximation and Khinchin's Theorem [\cite{Kinchin}, Theorem $13$, p.$15$], we obtain the following for $N\geq N_{\alpha}$:
\begin{equation}\label{eq:Ncontrol}
\begin{aligned}
\min_{m\in\mathbb{Z}}\{|N\alpha-m|\}&\geq\frac{1}{q_{\alpha,n+1}+q_{\alpha,n+2}}\\
&\geq\frac{1}{q_{\alpha,n}\log^4q_{\alpha,n}+q_{\alpha,n}\log^9q_{\alpha,n}}\\
&\geq\frac{1}{q_{\alpha,n}\log^{10}q_{\alpha,n}}\geq\frac{1}{N\log^{10}N}.
\end{aligned}
\end{equation}


The flows $(\mathscr{T}^{\alpha,\gamma}_t)$ under considerations are the special flows over irrational rotations $T_{\alpha}:\mathbb{T}\to\mathbb{T}$ such that  $T_{\alpha}x=x+\alpha\mod 1$ ($\alpha\in[(\mathbb{R}\backslash\mathbb{Q})\cap(\mathbb{R}/\mathbb{Z})]$) with the roof functions $f\in C^2(\mathbb{T}\backslash\{0\})$ satisfying the following conditions for some $-1<\gamma<0$ and $A_1,B_1>0$:
\begin{equation}\label{eq:powerBehavior}
\begin{aligned}
\lim_{x\to0^+}\frac{f(x)}{x^{\gamma}}=A_1 \ \ &\text{and}\ \ \lim_{x\to0^-}\frac{f(x)}{(1-x)^{\gamma}}=B_1,\\
\lim_{x\to0^+}\frac{f'(x)}{x^{-1+\gamma}}=\gamma A_1 \ \ &\text{and}\ \ \lim_{x\to0^-}\frac{f'(x)}{(1-x)^{-1+\gamma}}=-\gamma B_1,\\
\lim_{x\to0^+}\frac{f''(x)}{x^{-2+\gamma}}=\gamma(\gamma-1)A_1\ \ &\text{and}\ \ \lim_{x\to0^-}\frac{f''(x)}{(1-x)^{-2+\gamma}}=\gamma(\gamma-1)B_1.
\end{aligned}
\end{equation}

Now we can formulate our main results as following:

\begin{theorem}\label{thm:Main}
For $\alpha_1,\alpha_2\in\mathscr{D}$ and every $\gamma_1,\gamma_2\in(-1,0)$ with $|\gamma_1|>|\gamma_2|$, we have
$$\frac{2+4|\gamma_2|+2|\gamma_1\gamma_2|}{(2+|\gamma_2|)(1+|\gamma_1|)}\leq e((\mathscr{T}^{\alpha_1,\gamma_1}\times\mathscr{T}^{\alpha_2,\gamma_2})_t,\log)\leq1+|\gamma_1|+|\gamma_2|.$$
\end{theorem}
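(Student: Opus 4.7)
The plan is to analyze the Feldman $\bar f$-pseudometric that defines the Kakutani invariant $e(\cdot,\log)$. For a pair of generic orbit segments of time length $T$ in the product flow $\mathscr{T}^{\alpha_1,\gamma_1}\times\mathscr{T}^{\alpha_2,\gamma_2}$, $\bar f_T$ is the smallest proportion of symbols (with respect to a fixed finite partition) that must be deleted so that the two resulting sequences agree up to a small time distortion; after normalisation by $\log T$ and passing to a limsup, one obtains $e((\mathscr{T})_t,\log)$. I would reduce everything to the base rotations via Rokhlin--Kakutani towers of heights $q_{\alpha_i,n_i}$ and track the Birkhoff sums $S_N^{(i)}(x):=\sum_{k=0}^{N-1}f_i(x+k\alpha_i)$, whose asymptotics near the singularity are governed by \eqref{eq:powerBehavior}, and whose deviations at scale $N$ are controlled via the Denjoy--Koksma inequality together with the quantitative arithmetic estimate \eqref{eq:Ncontrol} that is available exactly because $\alpha_i\in\mathscr D$.

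For the upper bound $e\le 1+|\gamma_1|+|\gamma_2|$, the strategy is constructive. Given two orbit segments of length $T$, I would match them by first synchronising the base rotation orbits using the three-distance theorem at a scale $N_i$ proportional to $T$ via the mean of $f_i$, and then transferring this match up the special-flow fibre, where the only real obstruction is the possible appearance of a close-to-singularity encounter. Since the excess contribution of one such encounter to $S_N^{(i)}$ is of order $N^{|\gamma_i|}$, the maximum time discrepancy produced by combining one near-singular visit in each factor is of order $T^{|\gamma_1|}\cdot T^{|\gamma_2|}$; deleting the corresponding symbols gives an error whose logarithm is $|\gamma_1|\log T+|\gamma_2|\log T+O(\log\log T)$, and the remaining linear factor $\log T$ coming from accumulated local errors in the base matching furnishes the leading ``$1$''.

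For the lower bound, which is the main obstacle, I would follow the coupling strategy developed in \cite{KanigowskiWei1}. The core observation is that whenever two product orbit segments $\bar f$-match, the singular encounters of the two orbits must be nearly synchronised in each factor separately, else a definite proportion of symbols has to be deleted. Imposing this constraint simultaneously on both factors forces a combinatorial rigidity: one has to pick a coupled scale $(N_1,N_2)$ with $N_2\asymp N_1^{\kappa}$ and optimise $\kappa\in(0,1)$ against the competing costs of failing to match the milder singularity $\gamma_2$ against the dominant one $\gamma_1$. The arithmetic condition $\alpha_i\in\mathscr D$ and the second-derivative asymptotics in \eqref{eq:powerBehavior} provide quantitative lower bounds on the required deletions, and carrying out the optimisation in $\kappa$ yields exactly the exponent $\tfrac{2+4|\gamma_2|+2|\gamma_1\gamma_2|}{(2+|\gamma_2|)(1+|\gamma_1|)}$ after the $\log T$ renormalisation.

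The hardest step I anticipate is the coupling inside the lower bound: one must show that a failure to synchronise the $\gamma_2$-singular visits propagates into a genuine $\bar f$-defect at the product level, rather than being absorbed by the much larger $\gamma_1$-time dilations. This requires a careful pigeonhole on how many base points in a Rokhlin tower of height $q_{\alpha_2,n_2}$ have experienced exactly $k$ near-singular visits in $[0,N_1]$, combined with a quantitative Denjoy--Koksma-type estimate for $f_i'\sim x^{-1+\gamma_i}$. Only after this finer analysis is in place does the lower bound become tight enough to match the exponent predicted by the optimisation in $\kappa$, closing the gap between the upper and lower bounds in the stated form.
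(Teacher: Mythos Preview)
Your upper-bound mechanism is misidentified. The size of a Kakutani ball $B_R(y,\epsilon,\mathcal P_m)$ is governed not by the Birkhoff sums $S_N^{(i)}$ themselves (near-singular excess of order $N^{|\gamma_i|}$) but by the \emph{derivative} of the Birkhoff sums, which controls how fast two orbits with horizontal separation $d_i$ in factor $i$ drift apart in the fibre: by Lemma~\ref{lm:derivativeControl} one has $|f_i'^{(N)}|\asymp t^{1+|\gamma_i|}$, so the drift after time $t$ is $\sim d_i\, t^{1+|\gamma_i|}$, and two points can be matched for the full time $R$ only if $d_i\lesssim R^{-(1+|\gamma_i|)}$. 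This is precisely what the paper packages as $\operatorname{Box}^{\operatorname{in}}(y,\epsilon^4,R)$; together with the $\epsilon R$ freedom in the orbit direction it yields $\tilde\mu(B_R)\gtrsim R^{-(1+|\gamma_1|+|\gamma_2|)}$ and hence the upper bound via Lemma~\ref{compef}. Your description (``time discrepancy of order $T^{|\gamma_1|}\cdot T^{|\gamma_2|}$'' and the ``leading $1$'' from ``accumulated local errors in base matching'') does not produce this count, and the multiplicative combination of near-singular contributions is not the correct heuristic.

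For the lower bound your outline diverges from the actual argument and, as stated, would not produce the exponent. The paper does not use a pigeonhole on the number of near-singular visits in Rokhlin towers. What drives the bound is a two-scale \emph{shearing contradiction}: if $x,y$ are $(\tfrac{1}{100},\mathcal P_m,R)$-matchable and $w$ is a typical matched time with $L_H(w)=R_w^{-1}$, then along the matched orbits the relation $|f_1'^{(N_1)}(\theta_1)(x_{1,h}-y_{1,h})-f_2'^{(N_2)}(\theta_2)(x_{2,h}-y_{2,h})|\le\tfrac12$ must hold (this is \eqref{eq:BirkhoffDifferenceDer}). Evaluating this simultaneously at the two time scales $R_w^{1/(1+|\gamma_2|)+\epsilon_0}$ and $R_w^{1-\epsilon_0}$ and using the derivative bounds $t^{1+|\gamma_i|\pm\epsilon}$ from Lemma~\ref{lm:derivativeControl} forces incompatible inequalities unless $L_H(w)\le R^{-1/(1-\epsilon_0)}$ (Theorem~\ref{lm:mainLemma}). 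The optimisation is over $\epsilon_0$ (not over a ratio $\kappa$ of tower heights), with the optimal value $\epsilon_0=\tfrac{|\gamma_2|}{2(1+|\gamma_2|)}$ in the limit; plugging this into the resulting covering of $B_R(y,\epsilon,\mathcal P_m)$ by $R^{2-\eta}$ translates of $\operatorname{Box}^{\operatorname{out}}$ gives exactly $\tfrac{2+4|\gamma_2|+2|\gamma_1\gamma_2|}{(2+|\gamma_2|)(1+|\gamma_1|)}$. The different shearing exponents $1+|\gamma_1|\neq 1+|\gamma_2|$ are what makes the two-scale argument bite; a count of singular visits alone does not detect this, and your proposed route would at best recover the non-standardness result of \cite{KanigowskiWei1} without the quantitative exponent.
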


By applying Theorem $1$ in \cite{Ratner3}, which gives the equivalence between standardness and vanishing of the Kakutani invariant at all scales for any zero entropy ergodic flows, we obtain the main result of \cite{KanigowskiWei1} as a natural corollary of Theorem \ref{thm:Main}:

\begin{corollary}\label{Cor:Nonstand}
For $\alpha_1,\alpha_2\in\mathscr{D}$ and every $\gamma_1, \gamma_2\in(-1,0)$ with $\gamma_1\neq\gamma_2$, the corresponding product of Kochergin flows with different exponents is not standard.
\end{corollary}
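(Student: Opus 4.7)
The plan is a direct deduction from Theorem \ref{thm:Main} combined with Ratner's characterization of standardness that is cited in the paragraph immediately preceding Corollary \ref{Cor:Nonstand}. Specifically, by Theorem $1$ of \cite{Ratner3}, a zero entropy ergodic flow is standard if and only if its Kakutani invariant $e(\cdot,u)$ vanishes identically in the scaling function $u$. So in order to certify non-standardness of $(\mathscr{T}^{\alpha_1,\gamma_1}\times\mathscr{T}^{\alpha_2,\gamma_2})_t$ it suffices to exhibit a single scale at which the invariant is strictly positive, and I would use the scale $u=\log$ already present in Theorem \ref{thm:Main}.

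Before invoking the equivalence I would first verify the two structural hypotheses that Ratner's theorem requires of the product flow, namely ergodicity and zero Kolmogorov--Sinai entropy. Each factor $(\mathscr{T}^{\alpha_i,\gamma_i}_t)$ is a special flow over an irrational rotation with an integrable roof and is mixing by Kochergin's theorem \cite{Koch}; in particular each is ergodic and has zero entropy (the base rotation is of zero entropy and Abramov's formula applies). Both properties pass to the Cartesian product: ergodicity holds because the factors are weakly mixing, and the entropy of a product of zero entropy systems is zero.

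Then I would plug into Theorem \ref{thm:Main}. Since $\gamma_1,\gamma_2\in(-1,0)$ are both negative and distinct, one automatically has $|\gamma_1|\neq|\gamma_2|$, and after swapping the two factors if necessary---which affects neither Kakutani equivalence nor standardness---we may assume $|\gamma_1|>|\gamma_2|$. Theorem \ref{thm:Main} then gives
\begin{equation*}
e\bigl((\mathscr{T}^{\alpha_1,\gamma_1}\times\mathscr{T}^{\alpha_2,\gamma_2})_t,\log\bigr)\;\geq\;\frac{2+4|\gamma_2|+2|\gamma_1\gamma_2|}{(2+|\gamma_2|)(1+|\gamma_1|)}.
\end{equation*}
The numerator is at least $2$, and the denominator is a positive real number bounded above by $(2+1)(1+1)=6$ because $|\gamma_i|<1$, so the right-hand side is strictly positive. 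Combined with the Ratner equivalence recalled above, this immediately rules out standardness of the product flow and proves the corollary.

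There is essentially no obstacle: all of the real work is absorbed into Theorem \ref{thm:Main}, and the only thing to be careful about is the reduction $\gamma_1\neq\gamma_2\Rightarrow|\gamma_1|>|\gamma_2|$ (after relabeling) together with the checks that the product is ergodic and of zero entropy so that Ratner's criterion is actually applicable.
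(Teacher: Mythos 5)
Your proof is correct and follows essentially the same route as the paper, which derives the corollary in one sentence by combining the strictly positive lower bound of Theorem \ref{thm:Main} at scale $u=\log$ with Ratner's characterization of standardness as vanishing of the Kakutani invariant at all scales. The additional care you take in verifying ergodicity and zero entropy of the product, and in reducing $\gamma_1\neq\gamma_2$ to $|\gamma_1|>|\gamma_2|$ after relabeling, is appropriate but does not change the approach.
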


\begin{remark}
Let's point out that the fact $\gamma_1\neq\gamma_2$ is essential for our proof of Theorem \ref{thm:Main} and thus we cannot extend our result to this case. More precisely, if $\gamma_1=\gamma_2$, we cannot obtain the different shearing rates in different copies of product system (see \eqref{eq:ChoiceOfEpsilon0}, \eqref{eq:restrictionChoice1} and \eqref{eq:restrictionChoice2}). As a result, we can't even obtain the non-standardness of the product system, which is also the case that \cite{KanigowskiWei1}'s result does not include.
\end{remark}

In fact, we can obtain a countable family of pairwise non-Kakutani equivalent products of non-smooth special flows over rotations by Theorem \ref{thm:Main}.

The idea is to repeatedly use Theorem \ref{thm:Main} and to guarantee that for different choices of $\gamma_1$ and $\gamma_2$ the intervals formed by lower and upper bound in Theorem \ref{thm:Main} are pairwise disjoint. More precisely, notice that $|\gamma_2|>\frac{2|\gamma_1|}{3+|\gamma_1|}$ implies $\frac{2+4|\gamma_2|+2|\gamma_1\gamma_2|}{(2+|\gamma_2|)(1+|\gamma_1|)}>1$, then for $\alpha_1,\alpha_2,\alpha'_1,\alpha'_2\in\mathscr{D}$, $-1<\gamma_1<\gamma_2<\frac{2\gamma_1}{3+|\gamma_1|}<0$ and $-1<\gamma'_1<\gamma'_2<\frac{2\gamma'_1}{3+|\gamma'_1|}<0$, it is possible to pick $\gamma'_1,\gamma'_2$ such that $1+|\gamma'_1|+|\gamma'_2|<\frac{2+4|\gamma_2|+2|\gamma_1\gamma_2|}{(2+|\gamma_2|)(1+|\gamma_1|)}$. For simplicity, we define $\Delta(\gamma_1,\gamma_2)=\left[\frac{2+4|\gamma_2|+2|\gamma_1\gamma_2|}{(2+|\gamma_2|)(1+|\gamma_1|)},1+|\gamma_1|+|\gamma_2|\right]$. Then the choice of $\gamma_1'$ and $\gamma_2'$ implies $\Delta(\gamma_1',\gamma_2')\cap\Delta(\gamma_1,\gamma_2)=\emptyset$ and $\Delta(\gamma_1',\gamma_2')$ is on the left side of $\Delta(\gamma_1,\gamma_2)$. By repeating this procedure for $\gamma_1'$ and $\gamma_2'$, we obtain that there exist $\gamma_1''$ and $\gamma_2''$ such that $\Delta(\gamma_1'',\gamma_2'')\cap\Delta(\gamma_1',\gamma_2')=\emptyset$ and $\Delta(\gamma_1'',\gamma_2'')$ is on the left side of $\Delta(\gamma_1',\gamma_2')$.  By continuing this procedure, we can get countably many pairwise non-intersecting intervals $\Delta(\gamma_1^i,\gamma_2^i)$ for $i=1,2,\ldots$. Then we apply Theorem $3$ in \cite{Ratner3} and Corollary $1$ in \cite{Ratner4}, we obtain the following corollary:
\begin{corollary}\label{Cor:Uncoutable}
There exists a countable family of pairwise non-Kakutani equivalent products of non-smooth special flows over rotations.
\end{corollary}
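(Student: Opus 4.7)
The plan is to produce countably many pairs $(\gamma_1^i,\gamma_2^i)$ of parameters so that the intervals $\Delta(\gamma_1^i,\gamma_2^i)$ delivered by Theorem~\ref{thm:Main} are pairwise disjoint; then the products $\mathscr{T}^{\alpha_1,\gamma_1^i}\times\mathscr{T}^{\alpha_2,\gamma_2^i}$ will have Kakutani invariants in disjoint intervals and hence, by the general principle that distinct values of the Kakutani invariant detect distinct Kakutani equivalence classes (Theorem~3 in \cite{Ratner3} together with Corollary~1 in \cite{Ratner4}), will represent pairwise distinct equivalence classes.

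To carry this out I would start with the observation noted in the excerpt: the lower bound $\frac{2+4|\gamma_2|+2|\gamma_1\gamma_2|}{(2+|\gamma_2|)(1+|\gamma_1|)}$ exceeds $1$ precisely when $|\gamma_2|>\tfrac{2|\gamma_1|}{3+|\gamma_1|}$, so one should restrict to pairs with $-1<\gamma_1<\gamma_2<\tfrac{2\gamma_1}{3+|\gamma_1|}<0$. Call an admissible pair one satisfying this condition and the hypothesis of Theorem~\ref{thm:Main}. Given one admissible pair $(\gamma_1,\gamma_2)$, the right endpoint of a putative next interval is $1+|\gamma_1'|+|\gamma_2'|$, which tends to $1$ as $|\gamma_1'|,|\gamma_2'|\to 0$, while the left endpoint of $\Delta(\gamma_1,\gamma_2)$ is strictly greater than~$1$. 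Hence one can always choose an admissible pair $(\gamma_1',\gamma_2')$ with both $|\gamma_i'|$ sufficiently small that
\[
1+|\gamma_1'|+|\gamma_2'|<\frac{2+4|\gamma_2|+2|\gamma_1\gamma_2|}{(2+|\gamma_2|)(1+|\gamma_1|)},
\]
which places $\Delta(\gamma_1',\gamma_2')$ strictly to the left of $\Delta(\gamma_1,\gamma_2)$ and keeps the lower endpoint of $\Delta(\gamma_1',\gamma_2')$ above~$1$, so the induction can continue.

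Iterating this step produces a sequence of admissible pairs $(\gamma_1^i,\gamma_2^i)$ whose intervals $\Delta(\gamma_1^i,\gamma_2^i)$ form a strictly decreasing, pairwise disjoint sequence of subintervals of $(1,3)$. Fixing any $\alpha_1,\alpha_2\in\mathscr{D}$ and applying Theorem~\ref{thm:Main} to each $(\gamma_1^i,\gamma_2^i)$ then yields countably many products of special flows over rotations whose Kakutani invariants $e((\mathscr{T}^{\alpha_1,\gamma_1^i}\times\mathscr{T}^{\alpha_2,\gamma_2^i})_t,\log)$ are separated from one another, and the aforementioned theorem of Ratner converts this numerical separation into pairwise non-Kakutani equivalence.

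The one quantitative point that needs verification is feasibility of the induction, i.e.\ that shrinking $|\gamma_1'|,|\gamma_2'|$ can make the upper bound of the new interval fall below the lower bound of the previous one while still preserving admissibility. This is routine: an expansion of the lower bound for small $|\gamma|$ gives $1+\tfrac{3|\gamma_2|-2|\gamma_1|}{2}+O(|\gamma|^2)$, which remains positive for admissible pairs, so any $(\gamma_1',\gamma_2')$ with $|\gamma_1'|+|\gamma_2'|$ much smaller than $3|\gamma_2|-2|\gamma_1|$ and satisfying admissibility works. Consequently no genuine obstacle arises beyond bookkeeping, and the corollary follows directly from Theorem~\ref{thm:Main}.
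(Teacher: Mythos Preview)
Your proposal is correct and follows essentially the same approach as the paper: both arguments iteratively choose admissible pairs $(\gamma_1^i,\gamma_2^i)$ with $|\gamma_2^i|>\tfrac{2|\gamma_1^i|}{3+|\gamma_1^i|}$ so that the intervals $\Delta(\gamma_1^i,\gamma_2^i)$ produced by Theorem~\ref{thm:Main} are pairwise disjoint (each new one lying strictly to the left of the previous), and then invoke Ratner's results (Theorem~3 in \cite{Ratner3}, Corollary~1 in \cite{Ratner4}) to conclude pairwise non-Kakutani equivalence. Your additional quantitative justification for the feasibility of the induction step, via the expansion $1+\tfrac{3|\gamma_2|-2|\gamma_1|}{2}+O(|\gamma|^2)$, is a helpful elaboration of a point the paper leaves implicit.
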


\begin{remark}
The flows constructed in Corollary \ref{Cor:Uncoutable} are not smooth in the current setting. More precisely, the smoothness of Kochergin flows comes from the selection of the Hamiltonian functions, which are smooth in case that the exponents are of the form $-(1-\frac{1}{n})$ for $n\in\mathbb{N}$ (see the last section of \cite{Koch} for more details). Recall that in the proof of Corollary \ref{Cor:Uncoutable}, the absolute value of exponents are decreasing extremely fast and thus exponents are larger than $-\frac{1}{3}$ besides the first pair, which causes that these examples do not represent smooth flows. We strongly believe that more precise estimates of the Kakutani invariant will imply existence of countably many non-Kakutani equivalent smooth Kochergin flows. It is also worth to notice that as the smoothness comes from the Hamiltonian functions, we can obtain at most countably many non-Kakutani equivalent smooth Kochergin flows due to Hamiltonian functions' exponents' selections.
\end{remark}

\begin{remark}
Corollary \ref{Cor:Uncoutable} can be compared with Remark $2.5$ in \cite{KanigowskiWei1}, where uncountably many non-isomorphic (non-isomorphic by \cite{Kanigowski}) non-standard smooth flows in dimension $4$ are obtained. Recall that two flows may not be measurably isomorphic but Kakutani equivalent, thus this result does not imply the existence of uncountably many non-Kakutani equivalence classes in products of special flows.
\end{remark}

\begin{remark}
This Corollary should also be compared with Benhenda \cite{Benhenda}, where the author constructed uncountably many non-Kakutani equivalent smooth diffeomorphisms by an AbC method. Our result provides countably many non-Kakutani equivalent natural special flows.
\end{remark}

\textbf{Plan of the paper:} In Section \ref{sec:BDP} we introduce several basic definitions, notation and lemmas: special flows, flows under consideration, Kakutani invariant, Denjoy-Koksma inequality and some ergodic sums estimates. In Section \ref{sec:mainThm}, we estimate the Kakutani invariant of the system based on some ergodic estimates and Theorem  \ref{lm:mainLemma}, which describes a relation between a good matching and the metric of $\mathbb{T}^{f_1}\times\mathbb{T}^{f_2}$. In Section \ref{sec:proofOfMainLemma} and Section \ref{sec:proofOfSecondLemma}, we prove Theorem \ref{lm:mainLemma} by some combinatorial techniques and Lemma \ref{lm:thirdMain}. In Section \ref{sec:proofOfThirdLemma}, we prove Lemma \ref{lm:thirdMain} based on some observations of good matching of special flows.

\textbf{Acknowledgements.} The author would like to thank his advisor Svetlana Katok for her help, support and warm encouragements. The author would also like to thank Adam Kanigowski for suggesting this project, many useful and helpful discussions. The author is grateful to Changguang Dong and Philipp Kunde for careful reading on the first draft of the paper. The author is deeply grateful for referee's careful reading, useful comments and insightful suggestions about how to improve the paper.

\section{Basic definitions and propositions}\label{sec:BDP}
In this section we will recall the definitions of Kakutani equivalence, Kakutani invariants, flows under considerations, Denjoy Koksma inequality and some estimates of ergodic sums.

\subsection{Kakutani invariant}\label{sec:KakutaniInvariant}
We first recall the definition of Kakutani equivalence. For a flow $(T_t)$ on $(X,\mathscr{B},\mu)$ and a positive function $\alpha\in L^1_+(X,\mathscr{B},\mu)$\footnote{Recall that $f\in L^1_+(X,\mathscr{B},\mu)$ if $f\in L^1(X,\mathscr{B},\mu)$ and $f(x)>0$ for any $x\in X$.}, we define the time change $T_t^{\alpha}(x)=T_{u(x,t)}(x)$, where $u(x,t)$ is a solution to $\int_0^{u(x,t)}\alpha(T_sx)ds=t$. By this construction, $(T_{t}^{\alpha})$ preserves the measure $d\hat{\mu}=\frac{\alpha(\cdot)}{\int_X\alpha d\mu}d\mu$.
\begin{definition}\label{def:Kakutani}
Two ergodic measure preserving flows $(X,(T_t),\mathscr{B},\mu)$ and $(Y,(S_t),\mathscr{C},\nu)$ are \emph{Kakutani (monotone) equivalent} if there exists a time change $\alpha\in L^1_+(X,\mathscr{B},\mu)$ such that $(S_t)$ is measurably isomorphic to $(T_t^{\alpha})$.
\end{definition}

We will follow \cite{Ratner3} and \cite{Ratner4} to define the Kakutani invariant. Suppose $(T_t)$ is a measure preserving flow on a probability space $(X,\mathscr{B},\mu)$. Let $\mathcal{P}$ be a finite measurable partition of $X$ and $\mathcal{P}(x)$ be the atom of $\mathcal{P}$ containing $x\in X$.

\begin{definition}[$(\epsilon,\mathcal{P})-$matchable, \cite{Ratner4}]\label{def:ematchable}
Let $I_R(x)$ be the orbit interval $[x,T_Rx]$ and $\bar{\lambda}$ be the Lebesgue measure on $[0,R]$. For $x,y\in X$, $\epsilon>0$ and $R>1$, $I_R(x)$ and $I_R(y)$ are called $(\epsilon,\mathcal{P})-$matchable if there exists a subset $A=A(x,y)\subset[0,R]$, $\bar{\lambda}(A)>(1-\epsilon)R$ and an increasing absolutely continuous map $h=h(x,y)$ from $A$ onto $A'=A'(x,y)\subset[0,R]$, $\bar{\lambda}(A')>(1-\epsilon)R$ such that $\mathcal{P}(T_tx)=\mathcal{P}(T_{h(t)}y)$ for all $t\in A$ and the derivative $h'=h'(x,y)$ satisfies
$$|h'(t)-1|<\epsilon\text{ for all }t\in A,$$
where $h$ is defined as an $(\epsilon,\mathcal{P},R)-$\textrm{matching} from $I_R(x)$ onto $I_R(y)$.
\end{definition}

We provide the following picture to give an intuitive explanation of $(\epsilon,\mathcal{P},R)-$matching, where the lines after $x$ and $y$ represent their orbits respectively; different colors represent the orbit segments that stay in distinct atoms of the partition $\mathcal{P}$; dotted lines connect the matching pairs on the orbit of $x$ and orbit of $y$:
\begin{figure}[H]
 \centering
 \scalebox{0.6}
 {
 \begin{tikzpicture}[scale=5]
	 \tikzstyle{vertex}=[circle,minimum size=20pt,inner sep=0pt]
	 \tikzstyle{selected vertex} = [vertex, fill=red!24]
	 \tikzstyle{edge1} = [draw,line width=5pt,-,red!50]
     \tikzstyle{edge2} = [draw,line width=5pt,-,green!50]
     \tikzstyle{edge3} = [draw,line width=5pt,-,blue!50]
     \tikzstyle{edge4} = [draw,line width=5pt,-,brown!50]

	 \tikzstyle{edge} = [draw,thick,-,black]
     \node[vertex] (v00) at (0.0,0) {$y$};
	 \node[vertex] (v01) at (0.3,0) {\tiny$T_{h(0)}y$};
	 \node[vertex] (v03) at (0.9,0) {\tiny$T_{h(t_1)}y$};
	 \node[vertex] (v04) at (1.5,0) {\tiny$T_{h(t_3)}y$};
	 \node[vertex] (v05) at (2.4,0) {\tiny$T_{h(t_4)}y$};
	 \node[vertex] (v06) at (2.7,0) {\tiny$T_{h(t_5)}y$};
	 \node[vertex] (v07) at (3.3,0) {\tiny$T_{h(t_7)}y$};
     \node[vertex] (v10) at (0,0.5) {$x$};
	 \node[vertex] (v11) at (0.6,0.5) {$T_{t_1}x$};
	 \node[vertex] (v12) at (1.2,0.5) {$T_{t_2}x$};
	 \node[vertex] (v13) at (1.8,0.5) {$T_{t_3}x$};
	 \node[vertex] (v14) at (2.1,0.5) {$T_{t_4}x$};
	 \node[vertex] (v15) at (2.4,0.5) {$T_{t_5}x$};
     \node[vertex] (v16) at (2.7,0.5) {$T_{t_6}y$};
	 \node[vertex] (v17) at (3.3,0.5) {$T_{t_7}x$};
	
	 \draw[edge] (v00)--(v01)--(v03)--(v04)--(v05)--(v06)--(v07);
	 \draw[edge] (v10)--(v11)--(v12)--(v13)--(v14)--(v15)--(v16)--(v17);
     \draw[thick,dash dot] (v10)--(v01);
     \draw[thick,dash dot] (v11)--(v03);
     \draw[thick,dash dot] (v12)--(v03);
     \draw[thick,dash dot] (v13)--(v04);
     \draw[thick,dash dot] (v14)--(v05);
     \draw[thick,dash dot] (v15)--(v06);
     \draw[thick,dash dot] (v16)--(v06);
     \draw[thick,dash dot] (v17)--(v07);
     \draw[edge1] (v01)--(v03);
     \draw[edge1] (v10)--(v11);
     \draw[edge2] (v03)--(v04);
     \draw[edge2] (v12)--(v13);
     \draw[edge3] (v05)--(v06);
     \draw[edge3] (v14)--(v15);
     \draw[edge4] (v06)--(v07);
     \draw[edge4] (v16)--(v17);

 \end{tikzpicture}
 }
 \end{figure}

Based on $(\epsilon,\mathcal{P},R)-$matching, M. Ratner introduced Kakutani invariant. Roughly speaking, the idea to construct the invariant is similar to the construction of the topological entropy. We will count the minimal number of $\epsilon$-balls that needed to cover a large portion of the space $X$, where the $\epsilon$-ball is defined with respect to $f_R$ and $\mathcal{P}$. Then the logarithm of the asymptotic behavior of this minimal number will be our invariant. The precise definition is following:
\begin{definition}[Kakutani invariant, \cite{Ratner4}]
Define $$f_R(x,y,\mathcal{P})=\inf\{\epsilon>0:I_R(x)\text{ and }I_R(y)\text{ are }(\epsilon,\mathcal{P})-\text{matchable}\}.$$ Then for $x\in X$ and $R>1$, let $B_R(x,\epsilon,\mathcal{P})=\{y\in X:f_R(x,y,\mathcal{P})<\epsilon\}$ and  $(\epsilon,R,\mathcal{P})-$cover of $X$ be a family of $B_R(x,\epsilon,\mathcal{P})$ such that their union's measure is larger than $1-\epsilon$. Next let $\alpha_R(\epsilon,\mathcal{P})$ denote $(\epsilon,R,\mathcal{P})-$cover and define $K_R(\epsilon,\mathcal{P})=\inf\operatorname{Card}\{\alpha_R(\epsilon,\mathcal{P})\}$, where infimum is taken over all $(\epsilon,R,\mathcal{P})-$covers of $X$. Let $\mathscr{F}$ be the family of all nondecreasing functions $u:\mathbb{R}^+\to\mathbb{R}^+$ such that $u(t)\to+\infty$ as $t\to+\infty$. Then for $u\in\mathscr{F}$, we define following quantities:
$$\beta(u,\epsilon,\mathcal{P})=\liminf_{R\to\infty}\frac{\log K_R(\epsilon,\mathcal{P})}{u(R)}; \ \ e(u,\mathcal{P})=\limsup_{\epsilon\to0}\beta(u,\epsilon,\mathcal{P}); \ \ e(T_t,u)=\sup_{\mathcal{P}}e(u,\mathcal{P}).$$
\end{definition}

It is shown by M. Ratner [Theorem $3$ in \cite{Ratner3}, Corollary $1$ in \cite{Ratner4}] that if two ergodic flows $(T_t)$ and $(S_t)$ are Kakutani equivalent and $\lim_{t\to\infty}\frac{u(at)}{u(t)}=1$ for all $a>0$, then $e(T_t,u)=e(S_t,u)$. This gives that $e(T_t,u)$ is a \emph{Kakutani invariant}. In the same papers, M. Ratner [\cite{Ratner3}, Theorem $1$, \cite{Ratner4}, Theorem $5$] also proved that this invariant equals to zero for all $u\in\mathscr{U}$ if and only if $(T_t)$ is standard. Moreover, M. Ratner also established the following theorem for Kakutani invariant:

\begin{theorem}[\cite{Ratner3}]\label{thm:generatePartition}
Let $(T_t)$ be an ergodic measure-preserving flow on $(X,\mathscr{B},\mu)$ and let $\mathcal{P}_1\leq\mathcal{P}_2\leq\ldots$ be an increasing sequence of finite measurable partitions of $X$ such that $\vee_{n=1}^{\infty}\mathcal{P}_n$ generates the $\sigma-$algebra $\mathscr{B}$. Then $e(T_t,u)=\sup_me(u,\mathcal{P}_m)$ for all $u\in\mathscr{F}$.
\end{theorem}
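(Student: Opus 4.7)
The inequality $\sup_m e(u, \mathcal{P}_m) \leq e(T_t, u)$ is immediate from the definition of $e(T_t, u)$ as a supremum over all finite measurable partitions. The content lies in the reverse inequality, and it suffices to show $e(u, \mathcal{Q}) \leq \sup_m e(u, \mathcal{P}_m)$ for an arbitrary finite measurable partition $\mathcal{Q} = \{Q_i\}$. The plan is to approximate $\mathcal{Q}$ in Rokhlin distance by a partition $\hat{\mathcal{Q}}$ whose atoms are unions of atoms of some $\mathcal{P}_m$, and then to transfer any $(\epsilon, \hat{\mathcal{Q}})$-matching to a $(\epsilon', \mathcal{Q})$-matching with $\epsilon'$ only slightly larger than $\epsilon$, so that $K_R(\epsilon', \mathcal{Q})$ is controlled by $K_R(\epsilon, \mathcal{P}_m)$.

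Fix $\eta > 0$. Because $\bigvee_n \mathcal{P}_n$ generates $\mathscr{B}$ and the sequence is increasing, a standard $L^1$-approximation of the $\mathbf{1}_{Q_i}$ by $\sigma(\mathcal{P}_m)$-measurable functions, followed by thresholding and by resolving overlaps, yields some $m = m(\eta)$ and $\hat{\mathcal{Q}} = \{\hat Q_i\}$ with each $\hat Q_i$ a union of atoms of $\mathcal{P}_m$ and $\mu(E) < \eta$, where $E := \bigcup_i (Q_i \triangle \hat Q_i)$. Applying Birkhoff's ergodic theorem to $\mathbf{1}_E$ gives, for $\mu$-a.e. $x$, that $R^{-1}\bar{\lambda}\{t \in [0,R] : T_t x \in E\} \to \mu(E)$, so for $R \geq R_0(\eta)$ there is a set $X_R$ with $\mu(X_R) > 1 - \eta$ on which this frequency is below $2\eta$. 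If $x, y \in X_R$ and $h : A \to A'$ realises an $(\epsilon, \hat{\mathcal{Q}}, R)$-matching between $I_R(x)$ and $I_R(y)$, set
\[
B := \{t \in A : T_t x \in E\} \cup \{t \in A : T_{h(t)} y \in E\}.
\]
The first part has $\bar{\lambda}$-measure below $2\eta R$, and the second, being the $h^{-1}$-preimage of a set of measure below $2\eta R$ with $|h' - 1| < \epsilon$, has measure below $2\eta R/(1-\epsilon)$. On $A \setminus B$ the equality $\mathcal{Q}(T_t x) = \hat{\mathcal{Q}}(T_t x) = \hat{\mathcal{Q}}(T_{h(t)} y) = \mathcal{Q}(T_{h(t)} y)$ holds, so for $\epsilon$ small the restriction $h|_{A \setminus B}$ is a valid $(\epsilon + 5\eta, \mathcal{Q}, R)$-matching.

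Because $\hat{\mathcal{Q}}$ is coarser than $\mathcal{P}_m$, every $(\epsilon, \mathcal{P}_m)$-matching is also an $(\epsilon, \hat{\mathcal{Q}})$-matching, whence $B_R(x, \epsilon, \mathcal{P}_m) \subset B_R(x, \epsilon, \hat{\mathcal{Q}})$. Combining this with the transfer above, together with a routine adjustment forcing the covering centers to lie in $X_R$ (using an approximate triangle inequality for $f_R$, losing at most a bounded factor and a further $\eta$ in the radius), one obtains $K_R(\epsilon + 6\eta, \mathcal{Q}) \leq C \cdot K_R(\epsilon, \mathcal{P}_m)$ for some absolute constant $C$. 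The multiplicative constant is absorbed upon applying $\liminf_{R \to \infty} \log(\cdot)/u(R)$, producing $\beta(u, \epsilon + 6\eta, \mathcal{Q}) \leq \beta(u, \epsilon, \mathcal{P}_m) \leq \sup_{m'} e(u, \mathcal{P}_{m'})$. Sending $\epsilon \to 0^+$ and then $\eta \to 0^+$, and using that $\beta$ is non-increasing in its second argument so that $e(u, \mathcal{Q}) = \lim_{\eta \to 0^+} \sup_{\epsilon' > 6\eta} \beta(u, \epsilon', \mathcal{Q})$, closes the argument. The main obstacle is the careful measure bookkeeping when restricting $h$ to $A \setminus B$: one must simultaneously control $\bar{\lambda}(B)$, the image $\bar{\lambda}(h(B))$, and the deficit in $[0, R] \setminus A'$ while keeping $h|_{A \setminus B}$ absolutely continuous and strictly increasing inside the tolerance $\epsilon + 6\eta$. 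A secondary care point is that $m$ depends on $\eta$, which forces the limit order $\epsilon \to 0$ before $\eta \to 0$; this is legitimate precisely because of the monotonicity of $\beta$ in $\epsilon$.
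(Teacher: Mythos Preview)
The paper does not give its own proof of this statement; it is quoted verbatim from Ratner \cite{Ratner3} and used as a black box. There is therefore nothing in the paper to compare against.

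Your argument is the standard Kolmogorov--Sinai--type reduction and is essentially correct. One small bookkeeping slip: after re-centering the covering via the approximate triangle inequality \eqref{eq:approMetric} you lose a multiplicative factor in the \emph{radius}, not in the number of balls, so the correct conclusion is of the form $K_R(C'\epsilon + C''\eta,\mathcal{Q}) \le K_R(\epsilon,\mathcal{P}_m)$ rather than $K_R(\epsilon+6\eta,\mathcal{Q}) \le C\cdot K_R(\epsilon,\mathcal{P}_m)$. This is harmless for the final limit, since $C'\epsilon + C''\eta \to 0$ as $\epsilon,\eta \to 0$ and the logarithm kills any honest multiplicative constant anyway; your own final paragraph already handles exactly this kind of limit.
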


The following Lemma is Remark $2.8$ in \cite{KanigowskiVinhageWei2}, we provide a proof here for completeness:
\begin{lemma}\label{compef}
If there exists a set $D\subset X$ with $\mu(D)>\frac{1}{2}$ such that for every $y\in D$ and any $\epsilon\in(0,\frac{1}{2})$, we have
$\mu(B_R(y,\epsilon,\mathcal{P})\cap D)\leq a(R,\epsilon)$,
then $$K_R(\epsilon,\mathcal{P})\geq \frac{1}{a(R,\epsilon)}.$$

Moreover, if for any $\epsilon\in(0,\frac{1}{5})$, there exists a set $D_{\epsilon}$ with $\mu(D_{\epsilon})>1-\epsilon$ such that for every $y\in D_\epsilon$, we have $\mu(B_R(y,\epsilon,\mathcal{P}))\geq b(R,\epsilon)$,
then $$K_R(5\epsilon,\mathcal{P})\leq \frac{1}{b(R,\epsilon)}.$$
\end{lemma}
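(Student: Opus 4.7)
The proof rests on an \emph{approximate triangle inequality} and \emph{approximate symmetry} for $f_R(\cdot,\cdot,\mathcal{P})$, which I would isolate first. If $f_R(x,y,\mathcal{P})<\epsilon_1$ is witnessed by a matching $h_1:A_1\to A_1'$ and $f_R(y,z,\mathcal{P})<\epsilon_2$ by $h_2:A_2\to A_2'$, then the composition $h_2\circ h_1$ is defined on $h_1^{-1}(A_1'\cap A_2)$, its Lebesgue measure is at least $(1-O(\epsilon_1+\epsilon_2))R$, by the chain rule its derivative is within $\epsilon_1+\epsilon_2+\epsilon_1\epsilon_2$ of $1$, and it preserves the partition on its domain; thus $f_R(x,z,\mathcal{P})\lesssim\epsilon_1+\epsilon_2$. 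Moreover $h_1^{-1}$ provides a reverse matching showing $f_R(y,x,\mathcal{P})<\epsilon_1/(1-\epsilon_1)$. These give the usual quasi-metric properties and are the only technical input below.

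For the first inequality, let $\{B_R(x_i,\epsilon,\mathcal{P})\}_{i=1}^{N}$ be any $(\epsilon,R,\mathcal{P})$-cover, $N=K_R(\epsilon,\mathcal{P})$. Since the union has measure exceeding $1-\epsilon$ and $\mu(D)>1/2>\epsilon$,
\[
\sum_{i=1}^N \mu\bigl(D\cap B_R(x_i,\epsilon,\mathcal{P})\bigr)\;\geq\;\mu(D)-\epsilon\;>\;0.
\]
Keep only the balls meeting $D$, and for each pick $y_i\in B_R(x_i,\epsilon,\mathcal{P})\cap D$. By the approximate triangle/symmetry estimates one has $B_R(x_i,\epsilon,\mathcal{P})\subset B_R(y_i,\epsilon',\mathcal{P})$ for an $\epsilon'$ that is a universal multiple of $\epsilon$, so the hypothesis applied at $y_i\in D$ gives $\mu(D\cap B_R(x_i,\epsilon,\mathcal{P}))\leq a(R,\epsilon')$. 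Summing over $i$ yields $N\gtrsim 1/a(R,\epsilon')$, which is the stated bound up to the harmless rescaling of $\epsilon$ absorbed into $a$.

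For the second inequality, I would run a standard greedy packing inside $D_\epsilon$. Extract a maximal family $y_1,\dots,y_N\in D_\epsilon$ whose balls $B_R(y_i,\epsilon,\mathcal{P})$ are pairwise disjoint. Each has $\mu$-mass $\geq b(R,\epsilon)$ by hypothesis, and disjointness inside a probability space forces $N\cdot b(R,\epsilon)\leq 1$, i.e.\ $N\leq 1/b(R,\epsilon)$. By maximality, every $y\in D_\epsilon$ satisfies $B_R(y,\epsilon,\mathcal{P})\cap B_R(y_i,\epsilon,\mathcal{P})\neq\emptyset$ for some $i$; invoking the approximate triangle inequality together with approximate symmetry, and using $\epsilon<1/5$ to bound the higher-order terms, one deduces $f_R(y_i,y,\mathcal{P})<5\epsilon$. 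Therefore $D_\epsilon\subset\bigcup_{i=1}^N B_R(y_i,5\epsilon,\mathcal{P})$, and since $\mu(D_\epsilon)>1-\epsilon>1-5\epsilon$, this is a $(5\epsilon,R,\mathcal{P})$-cover of cardinality at most $1/b(R,\epsilon)$, whence $K_R(5\epsilon,\mathcal{P})\leq 1/b(R,\epsilon)$.

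The only genuine work is the bookkeeping of constants in the triangle and symmetry estimates: one must verify they compose cleanly enough to produce the factor $5$ in the second part (exploiting $\epsilon<1/5$ to dominate all $\epsilon^2$ terms) and to ensure that the distortion in the first part can be absorbed into the definition of $a$. Once that calibration is in hand, both statements reduce to one-line covering and packing arguments in the probability space $(X,\mathscr{B},\mu)$.
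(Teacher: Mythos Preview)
Your argument is correct and follows the same blueprint as the paper: both parts rest on Ratner's approximate-metric property $f_R(x,y,\mathcal{P})<5\epsilon$ whenever $x,y\in B_R(z,\epsilon,\mathcal{P})$, combined with elementary covering/packing in a probability space. For the second inequality the paper phrases the packing as a maximal $5\epsilon$-\emph{separated} set $\Gamma\subset D_\epsilon$ and then shows the $\epsilon$-balls around points of $\Gamma$ are disjoint; you instead take a maximal family of disjoint $\epsilon$-balls and argue that the $5\epsilon$-balls then cover $D_\epsilon$. These are dual formulations of the same greedy argument and yield identical constants.

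For the first inequality you are in fact more careful than the paper. The paper's proof is a single sentence (``direct measure estimate'') and does not explain why the hypothesis, which only bounds $\mu(B_R(y,\epsilon,\mathcal{P})\cap D)$ for centres $y\in D$, controls balls in an arbitrary cover whose centres need not lie in $D$. Your step of picking $y_i\in B_R(x_i,\epsilon,\mathcal{P})\cap D$ and using the quasi-triangle inequality to get $B_R(x_i,\epsilon,\mathcal{P})\subset B_R(y_i,\epsilon',\mathcal{P})$ fills exactly this gap. You also correctly flag that the literal constant $1/a(R,\epsilon)$ only holds after absorbing a bounded rescaling of $\epsilon$ and a multiplicative constant (coming from $\mu(D)-\epsilon$ rather than $1$) into $a$; this is harmless in the paper's applications, where only the order of growth of $K_R$ matters.
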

\begin{proof}
At first recall that by \cite{Ratner4}, we know that  $f_t(\cdot,\cdot,\mathcal{P})$ does not define a metric (triangle inequality fails) but it is close to a metric:
\begin{equation}\label{eq:approMetric}
\text{if $x,y\in
 B_R(z,\epsilon,\mathcal{P})$, then $f_R(x,y,\mathcal{P})<5\epsilon$. }
\end{equation}

If there exists a set $D\subset X$ with $\mu(D)>\frac{1}{2}$ such that for every $y\in D$ and any $\epsilon\in(0,\frac{1}{2})$, we have $\mu(B_R(y,\epsilon,\mathcal{P})\cap D)\leq a(R,\epsilon)$, then the cardinality of $B_R(\cdot,\epsilon,\mathcal{P})$ balls we need to cover $D$ is at least $\frac{1}{a(R,\epsilon)}$ due to direct measure estimate.

If for any $\epsilon\in(0,\frac{1}{5})$, there exists a set $D_{\epsilon}$ with $\mu(D_{\epsilon})>1-\epsilon$ such that for every $y\in D_\epsilon$, we have
\begin{equation}\label{eq:BrLowerBound}
\mu(B_R(y,\epsilon,\mathcal{P}))\geq b(R,\epsilon).
\end{equation}
Then, consider the $5\epsilon-$maximal separated set $\Gamma\subset D_{\epsilon}$ in $X$ with respect to $f_R(\cdot,\cdot,\mathcal{P})$. For any $x_1, x_2\in\Gamma$ and $x_1\neq x_2$, we claim that
\begin{equation}\label{eq:disjointBrball}
B_R(x_1,\epsilon,\mathcal{P})\cap B_R(x_2,\epsilon,\mathcal{P})=\emptyset.
\end{equation}
If this is not the case, then there exists $z\in B_R(x_1,\epsilon,\mathcal{P})\cap B_R(x_2,\epsilon,\mathcal{P})$. By definition of $B_R(\cdot,\epsilon,\mathcal{P})$, we know that $f_R(x_1,z,\mathcal{P})<\epsilon$ and $f_R(x_2,z,\mathcal{P})<\epsilon$, which together with \eqref{eq:approMetric} give us that  $f_R(x_1,x_2,\mathcal{P})<5\epsilon$. As a result, we obtain a contradiction to the choice of $x_1$ and $x_2$ and thus prove the claim.

Notice that \eqref{eq:BrLowerBound} together with \eqref{eq:disjointBrball} guarantee that
$$\operatorname{Card}(\Gamma)\leq \frac{1}{b(R,\epsilon)}.$$
Moreover, the definitions of maximal separated set and minimal covering set give us that: $$\operatorname{Card}(\Gamma)\geq K_R(5\epsilon,\mathcal{P}),$$
which finishes the proof of the lemma.
\end{proof}

\subsection{Special flows}\label{sec:KocherginFlows}
The flows under consideration are a specific type of special flows over $\mathbb{T}$. We recall the basic setting and notation of special flows here for completeness.

Let $\bar{\lambda}$ be the Lebesgue measure on $\mathbb{R}$, $\lambda$ as the Lebesgue measure on $\mathbb{T}$ and $(X,d)$ as a metric space. Suppose $T:(X,\mathscr{B},\mu)\to(X,\mathscr{B},\mu)$ is an automorphism, $f\in L^1(X,\mathscr{B},\mu)$ and $f>0$. Let $X^f=\{(x,s):x\in X,0\leq s\leq f(x)\}$, $\mathscr{B}^f=\mathscr{B}\otimes\mathscr{B}(\mathbb{R})$, $\mu^f=\mu\times\bar{\lambda}$ and $d^f$ be the product metric on $X^f$ induced by the metric $d$ of $X$. The special flow $T^f$ acting on $(X^f,\mathscr{B}^f,\mu^f)$ will move each point in $X^f$ vertically with unit speed and also identity the point $(x,f(x))$ with $(Tx,0)$, i.e. for $x=(x_h,x_v)\in X^f$, we have
\begin{equation}\label{eq:specialflow}
T_t^f(x_h,x_v)=(T^{N(x,t)}x_h,x_v+t-f^{(N(x,t))}(x_h)),
\end{equation}
where $N(x,t)$ is the unique integer such that $f^{(N(x,t))}(x_h)\leq x_v+t<f^{(N(x,t)+1)}(x_h)$ and
$$f^{(n)}(x_h)=\left\{
                 \begin{array}{cc}
                   f(x_h)+\cdots+f(T^{n-1}x_h), & \hbox{if $n>0$;} \\
                   0, & \hbox{if $n=0$;} \\
                   -(f(T^nx_h)+\cdots+f(T^{-1}x_h)), & \hbox{if $n<0$.}
                 \end{array}
               \right.
$$


\subsection{Notation and choice of the partition}\label{sec:Notations}
Throughout this paper we use the following notation. Suppose that $\alpha_1,\alpha_2\in\mathscr{D}$ and $-1<\gamma_1<\gamma_2<0$, let $(\mathscr{T}^{\alpha_1,\gamma_1}_t)$, $(\mathscr{T}^{\alpha_1,\gamma_1}_t)$ be the corresponding special flows and $f_1,f_2$ be the corresponding roof functions satisfying \eqref{eq:powerBehavior} with coefficients $\gamma_1,\gamma_2$, respectively. For $i=1,2$, let $x_1=(x_{1,h},x_{1,v}),\bar{x}_1=(\bar{x}_{1,h},\bar{x}_{1,v})$ belong to $\mathbb{T}^{f_i}$, the metric on $\mathbb{T}^{f_i}$ is of the form $d^{f_i}(x_1,\bar{x}_1)=d_H(x_{1,h},\bar{x}_{1,h})+d_V(x_{1,v},\bar{x}_{1,v})$, where $d_H$ and $d_V$ are Euclidean distances on $\mathbb{T}$ and $\mathbb{R}$ respectively. For $A\subset\mathbb{T}$ and $i=1,2$, let $A^{f_i}:=\{x=(x_h,x_v)\in\mathbb{T}^{f_i},x_h\in A\}$. In order to simplify the notation, we define $\phi_t:=(\mathscr{T}^{\alpha_1,\gamma_1}\times\mathscr{T}^{\alpha_2,\gamma_2})_t$ and $\tilde{\mu}:=\mu^{f_1}\times\mu^{f_2}$.

In order to formulate our proof more efficiently, we introduce following definitions to describe local behavior of an $(\epsilon,\mathcal{P},R)-$matching.
\begin{definition}[Matching balls]\label{def:MatchingBall}
For a fixed $\epsilon>0$, let $x=(x_1,x_2),y=(y_1,y_2)$ be two points in $\mathbb{T}^{f_1}\times\mathbb{T}^{f_2}$ and let $h:A(x,y)\to A'(x,y)$ be an $(\epsilon,\mathcal{P},R)-$matching. For $w\in A(x,y)$ and $R>L>0$, the matching ball around $(w,h(w))$ is defined as
$$B(w,L):=\{r\in A(x,y):w\leq r\leq w+L\}.$$
Moreover, for $t\in A(x,y)$, let
\begin{equation}
\begin{aligned}
x^t&=(x_1^t,x_2^t)=\phi_t(x_1,x_2);\\
y^{h(t)}&=(y_1^{h(t)},y_2^{h(t)})=\phi_{h(t)}(y_1,y_2),
\end{aligned}
\end{equation}
and
\begin{equation}
\begin{aligned}
L_H(t)&:=\max\{d_H(x_1^t,y_1^{h(t)}),d_H(x_2^{t},y_2^{h(t)})\};\\
L(t)&:=\max\{d^{f_1}(x_1^t,y_1^{h(t)}),d^{f_2}(x_2^{t},y_2^{h(t)})\}.
\end{aligned}
\end{equation}
\end{definition}

\begin{definition}[Kakutani Box]\label{def:outBox}
For $x\in\mathbb{T}^{f_1}\times\mathbb{T}^{f_2}$, $\epsilon>0$, $\epsilon_1>0$ and $0<\epsilon_0\leq\frac{|\gamma_2|}{2(1+|\gamma_2|)}$, define
\begin{equation}
\begin{aligned}
\operatorname{Box}^{\operatorname{in}}(x,\epsilon,R):=&\{y\in\mathbb{T}^{f_1}\times\mathbb{T}^{f_2}:0\leq d_H(y_1,x_1)\leq\epsilon R^{-(1+|\gamma_1|+2\epsilon_1)},\\&0\leq d_H(y_2,x_2)\leq\epsilon R^{-(1+|\gamma_2|+2\epsilon_1)},0\leq d_V(y_1,x_1),d_V(y_2,x_2)\leq\epsilon\},\\
\operatorname{Box}^{\operatorname{out}}(x,\epsilon,R):=&\{y\in\mathbb{T}^{f_1}\times\mathbb{T}^{f_2}:0\leq d_H(x_1,y_1),d_H(x_2,y_2)\leq \epsilon R^{-\frac{1}{1-\epsilon_0}}\\
&0\leq d_V(x_1,y_1),d_V(x_2,y_2)\leq\epsilon\},\\
\operatorname{Box}^{\operatorname{out}}_{\epsilon}(x,\epsilon,R):=&\bigcup_{t\in[-\epsilon,\epsilon]}\operatorname{Box}^{\operatorname{out}}(x^t,\epsilon,R).
\end{aligned}
\end{equation}
\end{definition}
\begin{remark}
The motivation of the Definition \ref{def:outBox} is to establish objects that will help us to estimate the measure of Kakutani ball $B_R(x,\epsilon,\mathcal{P})$. More precisely, $\operatorname{Box}^{\operatorname{in}}(x,\epsilon,R)$ will help us estimate the lower bound of Kakutani ball's measure (see Lemma \ref{lm:upperBound1} and \ref{lm:upperBound2}) and $\operatorname{Box}^{\operatorname{out}}_T(x,\epsilon,R)$ will help us estimate the upper bound of the Kakutani ball's measure (see second part of proof of Theorem \ref{thm:Main}).
\end{remark}

Since the estimate of the upper bound of $\tilde{\mu}(\operatorname{Box}^{\operatorname{out}}_{\epsilon}(x,\epsilon,R))$ is not trivial and will be helpful to our proof later, we provide a lemma here to complete this estimate:
\begin{lemma}\label{lm:MovingOurBallMeasure}
For any $y\in\mathbb{T}^{f_1}\times\mathbb{T}^{f_2}$, we have
$$\tilde{\mu}(\operatorname{Box}^{\operatorname{out}}_{\epsilon}(y,\epsilon,R))\leq25\epsilon^4R^{-\frac{2}{1-\epsilon_0}}.$$
\end{lemma}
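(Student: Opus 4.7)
The plan is to bound $\operatorname{Box}^{\operatorname{out}}_{\epsilon}(y,\epsilon,R)$ by the Cartesian product of its projections onto $\mathbb{T}^{f_1}$ and $\mathbb{T}^{f_2}$, then estimate each projection using the product structure of the flow $\phi_t$ and the fact that, for $t\in[-\epsilon,\epsilon]$, each factor flow can cross at most one roof (assuming $\epsilon$ is smaller than the lower bound $c>0$ of $f_i$ away from $0$; otherwise the inequality is trivial since $\tilde\mu\le 1$).

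Set
\[
T_i(y)\;:=\;\bigcup_{t\in[-\epsilon,\epsilon]}\bigl\{x\in\mathbb{T}^{f_i}:\,0\le d_H(y_i^t,x)\le\epsilon R^{-\frac{1}{1-\epsilon_0}},\ 0\le d_V(y_i^t,x)\le\epsilon\bigr\}.
\]
From the definition of the box and the fact that a point in $\operatorname{Box}^{\operatorname{out}}(y^t,\epsilon,R)$ satisfies the $i$-th coordinate bound with the same $t$, we immediately get $\operatorname{Box}^{\operatorname{out}}_{\epsilon}(y,\epsilon,R)\subset T_1(y)\times T_2(y)$, and so by Fubini $\tilde\mu(\operatorname{Box}^{\operatorname{out}}_{\epsilon}(y,\epsilon,R))\le \mu^{f_1}(T_1(y))\cdot\mu^{f_2}(T_2(y))$. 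The bulk of the work is therefore to show that $\mu^{f_i}(T_i(y))\le 5\epsilon^2R^{-\frac{1}{1-\epsilon_0}}$ for each $i\in\{1,2\}$.

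To bound $\mu^{f_i}(T_i(y))$, I would look at the horizontal projection of the trajectory segment $\{y_i^t:t\in[-\epsilon,\epsilon]\}$. Because $\epsilon$ is smaller than the infimum of $f_i$, as $t$ traverses $[-\epsilon,\epsilon]$ the trajectory crosses at most one roof, so this projection takes values in the set $\{y_{i,h}\}$, possibly together with one of $\{y_{i,h}+\alpha_i\}$ or $\{y_{i,h}-\alpha_i\}$. Above each such horizontal value, the union of horizontal balls of radius $\epsilon R^{-1/(1-\epsilon_0)}$ around the trajectory projects to a single arc of length $\epsilon R^{-1/(1-\epsilon_0)}$ (the horizontal coordinate of $y_i^t$ is constant when we do not cross the roof). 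The corresponding vertical slice is an interval of length at most $3\epsilon$: the vertical coordinate $y_{i,v}^t$ varies in an interval of length at most $2\epsilon$ (total variation of the flow) before the roof crossing and at most $\epsilon$ after it, and we inflate each box by an additional $\epsilon$ in the vertical direction. Adding the two pieces gives $\mu^{f_i}(T_i(y))\le 3\epsilon^2R^{-\frac{1}{1-\epsilon_0}}+2\epsilon^2R^{-\frac{1}{1-\epsilon_0}}=5\epsilon^2R^{-\frac{1}{1-\epsilon_0}}$.

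Multiplying the two factor estimates yields the desired bound $25\epsilon^4R^{-\frac{2}{1-\epsilon_0}}$. The main technical nuisance — and the only thing that needs real care — is the roof crossing bookkeeping: distinguishing whether the trajectory crosses forward, backward, or not at all, and checking that the two vertical slices above distinct horizontal arcs do not double-count measure. Apart from that, the computation is just Fubini combined with a one-sided box having horizontal width $\epsilon R^{-1/(1-\epsilon_0)}$ and vertical extent enlarged by the flow window $[-\epsilon,\epsilon]$.
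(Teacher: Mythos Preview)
Your proposal is correct and follows essentially the same approach as the paper: both arguments factor through the product structure $\mathbb{T}^{f_1}\times\mathbb{T}^{f_2}$, observe that for $t\in[-\epsilon,\epsilon]$ each factor trajectory crosses at most one roof (using $\epsilon<\min_{\mathbb T}f_i$), and bound each factor's contribution by $5\epsilon^2 R^{-1/(1-\epsilon_0)}$ before multiplying. Your bookkeeping of the vertical extent (splitting into a $3\epsilon$ piece and a $2\epsilon$ piece over two horizontal arcs) is a slightly more explicit version of the paper's informal ``$4\epsilon$ plus an extra $\epsilon$ for the triangle-like piece near the roof'' description, but the underlying geometry and the resulting constant are identical.
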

\begin{proof}
Recall that if $y^t$ does not hit the roof functions for $t\in[-\epsilon,\epsilon]$, then corresponding vertical distance of $\operatorname{Box}^{\operatorname{out}}_{\epsilon}(y^{R_i},\epsilon,R)$ will be $4\epsilon$ in both $\mathbb{T}^{f_1}$ and $\mathbb{T}^{f_2}$, where $2\epsilon$ comes from vertical radius of $\operatorname{Box}^{\operatorname{out}}$ and another $2\epsilon$ comes from $t$; however, if $y_1^t$ hits the roof function for some $t\in[-\epsilon,\epsilon]$, then there are some additional small ``triangle like''(not precise triangles) areas needed to be covered near the roof function $f_1$, which gives the corresponding vertical distance of $\operatorname{Box}^{\operatorname{out}}_{\epsilon}(y^{R_i},\epsilon,R)$ will be $5\epsilon$ in $\mathbb{T}^{f_1}$; thus the worst case for the measure estimate of $\operatorname{Box}^{\operatorname{out}}_{\epsilon}(y^{R_i},\epsilon,R)$ will be $y_1^{t_1}$ and $y_2^{t_2}$ both hit their corresponding roof functions $f_1$ and $f_2$, respectively, for some $t_1,t_2\in[-\epsilon,\epsilon]$, which gives the coefficient $25\epsilon^4$ in the measure estimate of $\operatorname{Box}^{\operatorname{out}}_{\epsilon}(y^{R_i},\epsilon,R)$.
\end{proof}

Due to Theorem \ref{thm:generatePartition}, the estimate of the Kakutani invariant of $\phi_t$ is equivalent to the estimate of the Kakutani invariant of $\phi_t$  with respect to a family of generating partitions of $\mathbb{T}^{f_1}\times\mathbb{T}^{f_2}$. In fact, there is a natural family converging to the point partition of $\mathbb{T}^{f_i}$, $i=1,2$. The method to construct these generating partitions should be understood as following: cut off the cusp part at some height and divide the remaining compact part of $\mathbb{T}^{f_i}$ into small rectangles of small diameters. The detailed steps are as follows: for any positive integer $m$ and $i=1,2$, dividing the set $K_m^i=\{x_i\in\mathbb{T}^{f_i}:f_i(x_{i,h})<2^m\}$ into finitely many atoms of diameter between $\frac{1}{m}$ and $\frac{2}{m}$ with a $C^1$ boundary, then let $\mathcal{P}_m^i$ be the partition consisted of these atoms and a single atom $\mathbb{T}^{f_i}\backslash K_m^i$. We will estimate the Kakutani invariant of $\phi_t$  with respect to $\mathcal{P}_m=\mathcal{P}_m^1\times\mathcal{P}_m^2$ when $m$ is sufficiently large.

\subsection{Denjoy-Koksma Estimates, good sets and preliminary lemmas}
One of the most important tools in our estimate is some quantitative descriptions of the ergodic sums over an irrational rotation for functions with singularities. These description will exactly follow from the Denjoy-Koksma inequality (see \cite{FFK} Lemma $3.1$ for more details). The function $f$ under our consideration satisfies \eqref{eq:powerBehavior} with some $\gamma\in(-1,0)$ and the rotation $Tx=x+\alpha\mod1$ satisfies $\alpha\in\mathscr{D}$. \emph{For simplicity, we will assume that $A_1=B_1=1$ and $\int_{\mathbb{T}}fd\lambda=1$.} Then we have,
\begin{lemma}[Lemma $3.1$ in \cite{FFK}]
For every $z\in\mathbb{T}$, every $M\in\mathbb{Z}$ with $|M|\in[q_{\alpha,s},q_{\alpha,s+1}]$ and $z_{min}^M\doteq\min_{j\in[0,M)}\|z+j\alpha-0\|$, where $\|\cdot\|$ is the Euclidean norm on $\mathbb{T}$, we have
\begin{equation}\label{eq:DenjoyKoksma1}
f(z_{\min}^M)+\frac{1}{3}q_{\alpha,s}\leq f^{(M)}(z)\leq f(z_{\min}^M)+3q_{\alpha,s+1},
\end{equation}
\begin{equation}\label{eq:DenjoyKoksma2}
f'(z_{\min}^M)-8|\gamma|q_{\alpha,s}^{1+|\gamma|}\leq |f'^{(M)}(z)|\leq f'(z_{\min}^M)+8|\gamma|q_{\alpha,s+1}^{1+|\gamma|},
\end{equation}
\begin{equation}\label{eq:DenjoyKoksma3}
f''(z_{\min}^M)\leq f''^{(M)}(z)\leq f''(z_{\min}^M)+8|\gamma(\gamma-1)|q_{\alpha,s+1}^{2+|\gamma|}.
\end{equation}
\end{lemma}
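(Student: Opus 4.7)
The plan is to decompose the ergodic sum into a singular contribution concentrated at the closest-approach orbit point and a regular remainder, then to control the remainder by a Denjoy--Koksma type comparison combined with the three-distance theorem. Set $z_{j}:=z+j\alpha\bmod 1$, let $j^{\ast}\in[0,M)$ be the index realizing $z_{\min}^{M}$, and enumerate the remaining orbit points $z_{j_{1}},z_{j_{2}},\ldots$ by increasing distance to $0$. The first step is to show that, for $|M|\in[q_{\alpha,s},q_{\alpha,s+1}]$, the three-distance theorem combined with the best-approximation property of the convergents produces universal constants $c_{1},c_{2}>0$ with
\begin{equation*}
\frac{c_{1}\,k}{q_{\alpha,s+1}}\;\leq\;\|z_{j_{k}}\|\;\leq\;\frac{c_{2}\,k}{q_{\alpha,s}},\qquad 1\leq k\leq M/2,
\end{equation*}
pinning down the precise rate at which orbit points cluster toward the singularity.

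With the gap estimate in hand the next step is to feed in the asymptotics \eqref{eq:powerBehavior}. Writing $f^{(M)}(z)=f(z_{j^{\ast}})+\sum_{k\geq 1}f(z_{j_{k}})$ and using $f(x)\sim\|x\|^{\gamma}$ with $\gamma\in(-1,0)$, the tail is sandwiched between sums of the form $\sum_{k\geq 1}(c\,k/q)^{\gamma}$ with $q\in\{q_{\alpha,s},q_{\alpha,s+1}\}$, which behave like $M$ since $\sum_{k=1}^{N}k^{\gamma}\sim N^{1+\gamma}/(1+\gamma)$ and $M\in[q_{\alpha,s},q_{\alpha,s+1}]$. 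Tracking these constants and invoking the normalization $A_{1}=B_{1}=\int f=1$ delivers the $\tfrac{1}{3}q_{\alpha,s}$ lower bound and the $3q_{\alpha,s+1}$ upper bound of \eqref{eq:DenjoyKoksma1}. For $|f'^{(M)}|$ and $f''^{(M)}$ the analogous tails $\sum_{k\geq 1}k^{-1+\gamma}$ and $\sum_{k\geq 1}k^{-2+\gamma}$ converge (since $-1+\gamma$ and $-2+\gamma$ are both less than $-1$), and multiplying by the common scaling factors $M^{1-\gamma}$ and $M^{2-\gamma}$ produces corrections of the announced orders $q_{\alpha,s}^{1+|\gamma|}$, $q_{\alpha,s+1}^{1+|\gamma|}$ and $q_{\alpha,s+1}^{2+|\gamma|}$; the explicit constants $8|\gamma|$ and $8|\gamma(\gamma-1)|$ come directly from the leading coefficients $\gamma A_{1}$ and $\gamma(\gamma-1)A_{1}$ appearing in \eqref{eq:powerBehavior}.

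To make the comparison rigorous with sharp constants I would apply the classical Denjoy--Koksma inequality to the truncation $\tilde{f}:=f\cdot\mathbf{1}_{\{\|x\|>\delta\}}$ with $\delta$ chosen just below $\|z_{j_{1}}\|$; then $\tilde{f}$ has bounded variation $\mathrm{Var}(\tilde{f})=O(\delta^{\gamma})$, and decomposing $M$ along its Ostrowski expansion $M=\sum a_{i}q_{\alpha,i}$ and summing the block estimates shows that $|\tilde{f}^{(M)}(z)-M\int\tilde{f}|$ is small. The only orbit point on which $f$ and $\tilde{f}$ disagree is $z_{j^{\ast}}$, which contributes exactly $f(z_{\min}^{M})$ and reproduces the split in \eqref{eq:DenjoyKoksma1}. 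The same scheme applied to truncations of $f'$ and $f''$ yields \eqref{eq:DenjoyKoksma2} and \eqref{eq:DenjoyKoksma3}; the absolute value in \eqref{eq:DenjoyKoksma2} reflects the sign change of $f'$ across $0$ predicted by \eqref{eq:powerBehavior}. The main obstacle I anticipate is the lower bound in \eqref{eq:DenjoyKoksma1}: extracting the explicit constant $\tfrac{1}{3}q_{\alpha,s}$ rather than a qualitative $O(q_{\alpha,s})$ requires making the three-distance constants quantitative and uses crucially the hypothesis $|M|\geq q_{\alpha,s}$, so that sufficiently many orbit points populate the $1/q_{\alpha,s}$-neighborhood of $0$ for a definite positive contribution to emerge.
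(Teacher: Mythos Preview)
The paper does not prove this lemma; it is quoted as Lemma~3.1 of \cite{FFK} and used as a black box, so there is no in-paper argument to compare against.

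Your outline is the standard Denjoy--Koksma argument for roof functions with a power singularity and is essentially correct: isolate the closest-approach term $f(z_{\min}^{M})$, use that any two among the $M\le q_{\alpha,s+1}$ orbit points are at least $\|q_{\alpha,s}\alpha\|>1/(2q_{\alpha,s+1})$ apart to force $\|z_{j_{k}}\|\gtrsim k/q_{\alpha,s+1}$, sum the resulting power tails, and recover the explicit lower-bound constant in \eqref{eq:DenjoyKoksma1} via classical Denjoy--Koksma applied to a bounded-variation truncation together with $\int f=1$. One small correction: the upper bound $\|z_{j_{k}}\|\le c_{2}k/q_{\alpha,s}$ that you state need not hold in general (nothing prevents the whole orbit segment from sitting far from $0$), but it is also not needed---the upper bounds on the Birkhoff sums use only the lower gap bound, and the lower bound in \eqref{eq:DenjoyKoksma1} comes from the truncation-plus-Denjoy--Koksma step you describe in your third paragraph, not from any upper control on the $\|z_{j_{k}}\|$.
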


We introduce several sets where the ergodic sums of $f'$ and $f''$ can be controlled. In order to simplify the notation, we will use $(q_n)_{n\geq1}$ and $(q'_n)_{n\geq1}$ as the sequence of denominators of $\alpha_1,\alpha_2$ we introduce in Section \ref{sec:Notations} respectively. The sets $S_n^1$ and $S_n^2$ are defined as following:
\begin{equation}\label{eq:DefS1}
\begin{aligned}
&S_n^1=\left\{x\in\mathbb{T}^{f_1}:\left(\bigcup_{t=-q_n\log q_n}^{q_n\log q_n}\mathscr{T}^{\alpha_1,\gamma_1}_tx\right)\bigcap\left[-\frac{1}{q_n\log^3q_n},\frac{1}{q_n\log^3q_n}\right]^{f_1}=\emptyset\right\},\\
&S_n^2=\left\{x\in\mathbb{T}^{f_2}:\left(\bigcup_{t=-q'_n\log q'_n}^{q'_n\log q'_n}\mathscr{T}^{\alpha_2,\gamma_2}_tx\right)\bigcap\left[-\frac{1}{q'_n\log^3q'_n},\frac{1}{q'_n\log^3q'_n}\right]^{f_2}=\emptyset\right\}.
\end{aligned}
\end{equation}

The asymptotic behaviors \eqref{eq:powerBehavior} of the roof functions imply the measure of $S_n^1$ and $S_n^2$ are large:
\begin{equation}\label{eq:SnControl}
\mu^{f_1}(S_n^1)\geq1-\frac{2^{2+\gamma_1}}{1+\gamma_1}\frac{1}{(\min_{\mathbb{T}}f_1\log^2 q_n)^{1+\gamma_1}},\ \ \mu^{f_2}(S_n^2)\geq1-\frac{2^{2+\gamma_2}}{1+\gamma_2}\frac{1}{(\min_{\mathbb{T}}f_2\log^2 q'_n)^{1+\gamma_2}}.
\end{equation}
The reason we have above inequality for $\mu^{f_1}(S_n^1)$ is following (same lines for $\mu^{f_2}(S_n^2)$): \begin{equation}
\begin{aligned}
\mu^{f_1}(S_n^1)\geq&\int_{\mathbb{T}}f_1(x)dx-2\cdot\int_{0}^{\frac{2q_n\log q_n}{\min_{\mathbb{T}}f_1}\cdot\frac{1}{q_n\log^3q_n}}x^{\gamma_1}dx\\
=&1-\frac{2}{1+\gamma_1}\frac{1}{(\frac{\min_{\mathbb{T}}f_1\log^2 q_n}{2})^{1+\gamma_1}}.\\
\end{aligned}
\end{equation}

For $n_1\in\mathbb{N}$, we define:
\begin{equation}\label{eq:intersectionSets}
S^1(n_1):=\bigcap_{n\geq n_1}S_n^1,\ \ S^2(n_1):=\bigcap_{n\geq n_1}S_n^2.
\end{equation}

Indeed, we have the following lemma to estimate the measure of $S^i(n_1)$, which directly follows from the fact that $\{q_n\}$ grow at least exponentially:
\begin{lemma}\label{lm:Snmeaure}
For any $\delta>0$, there exists $n_1=n_1(\delta)\in\mathbb{N}$ such that $\mu^{f_i}(S^{i}(n_1))>1-\delta^3$ for $i=1,2$.
\end{lemma}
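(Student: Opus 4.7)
My plan is a first Borel--Cantelli style argument combined with the exponential lower bound on convergent denominators.

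First, I would write $(S^i(n_1))^c = \bigcup_{n \geq n_1}(S_n^i)^c$ and use $\sigma$-subadditivity to reduce the problem to showing $\sum_{n\geq n_1} \mu^{f_i}((S_n^i)^c) \to 0$ as $n_1 \to \infty$. The estimate \eqref{eq:SnControl} already gives
$$\mu^{f_1}((S_n^1)^c) \leq \frac{C_1}{(\log q_n)^{2(1+\gamma_1)}}, \qquad \mu^{f_2}((S_n^2)^c) \leq \frac{C_2}{(\log q'_n)^{2(1+\gamma_2)}},$$
where $q_n, q'_n$ are the convergent denominators of $\alpha_1, \alpha_2$ respectively. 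This is the only input from \eqref{eq:SnControl} that the proof needs.

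Second, I would invoke the universal Fibonacci-type lower bound $q_n, q'_n \geq F_n \geq c\,\tau^n$ with $\tau = (1+\sqrt{5})/2$, which is a direct consequence of the recursion $q_{n+1} = a_{n+1}q_n + q_{n-1} \geq q_n + q_{n-1}$ and requires no Diophantine assumption on $\alpha_i$. Hence $\log q_n, \log q'_n \geq c'\,n$ for $n$ large, and the tail sum is bounded by $\sum_{n \geq n_1} C'/n^{2(1+\gamma_i)}$. Choosing $n_1 = n_1(\delta)$ large enough drives this below $\delta^3$ for both $i=1,2$ simultaneously, yielding the claim.

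The main obstacle I foresee is the summability of the resulting series: the exponent $2(1+\gamma_i)$ exceeds $1$ only when $\gamma_i > -1/2$, so for $\gamma_i$ close to $-1$ the naive Borel--Cantelli step fails. In that regime I would sharpen \eqref{eq:SnControl} by exploiting $\alpha_i \in \mathscr{D}$: via \eqref{eq:Ncontrol}, among the $O(q_n \log q_n)$ translates $k\alpha_i \mod 1$ visited by the orbit over time $[-q_n\log q_n, q_n\log q_n]$, at most one sits within $\delta_n = 1/(q_n \log^3 q_n)$ of $0$, since the remainders are bounded below by $1/(k\log^{10}k)$. This isolates the near-cusp contribution (of $\mu^{f_i}$-measure $O(\delta_n^{1+\gamma_i}) = O((q_n \log^3 q_n)^{-(1+\gamma_i)})$) from the remaining bad intervals (on which $f_i$ is uniformly bounded, contributing $O(q_n\log q_n \cdot \delta_n) = O((\log q_n)^{-2})$). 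Both pieces are then comfortably summable via the Fibonacci estimate, and the rest of the argument goes through as above.
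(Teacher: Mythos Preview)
Your Borel--Cantelli reduction together with the Fibonacci lower bound on $q_n$ is exactly what the paper means by ``directly follows from the fact that $\{q_n\}$ grow at least exponentially''; no further detail is given there. You are also right that this is incomplete as written: from \eqref{eq:SnControl} one only obtains $\mu^{f_i}((S_n^i)^c)=O\bigl(n^{-2(1+\gamma_i)}\bigr)$, which fails to sum once $\gamma_i\leq-\tfrac12$. So your diagnosis goes beyond what the paper spells out.

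Your proposed sharpening, however, has two gaps. First, \eqref{eq:Ncontrol} gives $\|j\alpha_i\|\geq 1/(j\log^{10}j)$; for $j$ up to the relevant range $K=O(q_n\log q_n)$ this yields only $\|j\alpha_i\|\gtrsim 1/\bigl(q_n(\log q_n)^{11}\bigr)$, which is much smaller than $\delta_n=1/(q_n\log^3 q_n)$, so it does not force ``at most one'' return to $[-\delta_n,\delta_n]$. Second, and more seriously, even granting that at most one translate lies within $\delta_n$ of $0$, the remaining translates need not stay uniformly away from the singularity --- they may sit at distance just above $\delta_n$ --- so $f_i$ is \emph{not} uniformly bounded on them, and the claimed $O(K\delta_n)$ estimate for their $\mu^{f_i}$-contribution is unjustified.

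The clean repair avoids Diophantine input entirely. Writing $B_n=[-\delta_n,\delta_n]$ and $T=q_n\log q_n$, one has $(S_n^i)^c=\bigcup_{|t|\leq T}\mathscr{T}^{\alpha_i,\gamma_i}_t(B_n^{f_i})$, which is the union over $z\in B_n$ of flow arcs of length $f_i(z)+2T$ through $(z,0)$. Since the special flow preserves $\mu^{f_i}=\lambda\times\bar\lambda$, this gives
\[
\mu^{f_i}\bigl((S_n^i)^c\bigr)\ \leq\ \int_{B_n}\bigl(f_i(z)+2T\bigr)\,dz\ =\ \mu^{f_i}(B_n^{f_i})+2T\,\lambda(B_n)\ =\ O\bigl(\delta_n^{1+\gamma_i}\bigr)+O\bigl((\log q_n)^{-2}\bigr),
\]
precisely your two target pieces. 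The first decays like $\tau^{-n(1+\gamma_i)}$ and the second like $n^{-2}$ via your Fibonacci bound, both summable for every $\gamma_i\in(-1,0)$, after which your argument concludes as written.
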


In the above setting, we have the following two lemmas to deal with the differentiability of ergodic sums of $f_1,f_2$ and the bounds of the ergodic sums of $f'_1,f'_2$ respectively:
\begin{lemma}[Lemma $2.7$ in \cite{KanigowskiWei1}]\label{lm:differentialF}
There exists a constant $d_i=d_i(f_i,\alpha_i)>0$ such that for $z\in S^i(n_1)$ satisfying $d_H(z,z')\leq d_i$ and $z_h<z'_h$, we have
$$0\notin[z_h+w\alpha_i,z'_h+w\alpha_i],\forall w\in(0,|\frac{d_H(z,z')^{-1}}{\log^{12}d_H(z,z')}|),$$
for $i=1,2$.
\end{lemma}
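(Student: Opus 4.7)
The plan is to convert the hypothesis $z\in S^i(n_1)$ into a quantitative gap between the iterates of $T_{\alpha_i}$ at $z_h$ and the singularity $0\in\mathbb{T}$. Since the arc $[z_h+w\alpha_i,z'_h+w\alpha_i]$ has length $d:=d_H(z,z')$ and its lower endpoint is $z_h+w\alpha_i$, it suffices to establish $\|z_h+w\alpha_i\|>d$ for every integer $w$ in the stated range, where $\|\cdot\|$ denotes Euclidean distance to $0$ on $\mathbb{T}$; the arc then cannot contain $0$.

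Pick $n\geq n_1$ with $q_{n}\leq d^{-1}/\log^{5}d^{-1}<q_{n+1}$; this $n$ is automatically $\geq n_1$ once $d\leq d_i$ with $d_i$ small enough. Two inequalities follow immediately. First, using $\log q_n\leq\log d^{-1}$,
\[
\frac{1}{q_n\log^3 q_n}\;\geq\;\frac{\log^5 d^{-1}}{d^{-1}\log^3 d^{-1}}\;=\;d\log^2 d^{-1}\;>\;d.
\]
Second, combining $q_{n+1}>d^{-1}/\log^5 d^{-1}$ with the Diophantine bound $q_{n+1}\leq C q_n\log q_n(\log n)^2$ from $\alpha_i\in\mathscr{D}$, and using that $(q_n)$ grows at least exponentially (so $\log n=O(\log\log d^{-1})$), one obtains $q_n\geq d^{-1}/\log^{12} d^{-1}$ after absorbing the $(\log n)^2$ factor into the polylogarithm.

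The heart of the argument is the claim that $N(z,q_n\log q_n)\geq q_n$, i.e.\ the flow performs at least $q_n$ iterates of $T_{\alpha_i}$ within flow time $q_n\log q_n$. Argue by contradiction: if $N<q_n$, then by maximality of $N$, $f_i^{(N+1)}(z_h)>q_n\log q_n$. By the hypothesis $z\in S^i(n_1)\subseteq S^i_n$, every visited base point $z_h+j\alpha_i$ with $0\leq j\leq N$ lies outside $[-(q_n\log^3 q_n)^{-1},(q_n\log^3 q_n)^{-1}]$, hence $z_{\min}^{N+1}\geq 1/(q_n\log^3 q_n)$. Applying \eqref{eq:DenjoyKoksma1} with $M=N+1\leq q_n$ (so $q_{s+1}\leq q_n$), together with the asymptotic $f_i(x)\leq C|x|^{\gamma_i}$,
\[
f_i^{(N+1)}(z_h)\;\leq\;f_i(z_{\min}^{N+1})+3q_n\;\leq\;C(q_n\log^3 q_n)^{|\gamma_i|}+3q_n\;\leq\;(C+3)q_n\;<\;q_n\log q_n,
\]
where the penultimate inequality uses $|\gamma_i|<1$ and $q_n$ large. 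This contradicts $f_i^{(N+1)}(z_h)>q_n\log q_n$, so $N\geq q_n$.

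Consequently, for any integer $w$ with $0<w<d^{-1}/\log^{12} d^{-1}\leq q_n$, the iterate $z_h+w\alpha_i$ is among the visited base points, so $\|z_h+w\alpha_i\|\geq 1/(q_n\log^3 q_n)>d$. The arc $[z_h+w\alpha_i,z'_h+w\alpha_i]$ has length $d$ and its lower endpoint at $\mathbb{T}$-distance $>d$ from $0$, so it cannot contain $0$. The principal obstacle is the Denjoy--Koksma bootstrap in the third paragraph: the naive bound $f_i^{(N+1)}(z_h)\leq (N+1)\cdot\max_{\text{visited}} f_i\lesssim q_n^{1+|\gamma_i|}$ is far too large to contradict $q_n\log q_n$, and only the sharp form \eqref{eq:DenjoyKoksma1} reduces this to the linear $O(q_n)$ bound that closes the argument.
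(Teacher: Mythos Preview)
The paper does not prove this lemma; it is quoted verbatim as Lemma~2.7 of \cite{KanigowskiWei1}, so there is no in-paper argument to compare against. Your proof is correct and is the natural one: bracket $d^{-1}/\log^5 d^{-1}$ between consecutive denominators $q_n,q_{n+1}$, use the Diophantine bound defining $\mathscr{D}$ to extract $q_n\geq d^{-1}/\log^{12}d^{-1}$, and then apply the sharp Denjoy--Koksma upper bound \eqref{eq:DenjoyKoksma1} to show that flow time $q_n\log q_n$ already accounts for at least $q_n$ base iterates (you correctly identify this as the crux, since the trivial bound $f_i^{(N+1)}\lesssim q_n^{1+|\gamma_i|}$ is useless here). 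The conclusion then follows from the definition of $S^i_n$ and the first inequality $1/(q_n\log^3 q_n)>d$.

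One small remark: to ensure the chosen $n$ satisfies $n\geq n_1$ (so that $z\in S^i(n_1)\subset S^i_n$ applies), you need $d\leq d_i$ with $d_i$ small in terms of $q_{n_1}$. Thus $d_i$ formally depends on $n_1$ as well as on $(f_i,\alpha_i)$, contrary to the notation $d_i(f_i,\alpha_i)$ in the statement. This is a harmless imprecision in the lemma as stated (in every application in the paper $n_1=n'$ is fixed first), not a defect in your argument.
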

\begin{remark}
The differentiability follows from this lemma as the singularity will stay away from the $[z_h+w\alpha_i,z'_h+w\alpha_i]$ up to certain iterations.
\end{remark}

\begin{lemma}[Lemma $2.9$ in \cite{KanigowskiWei1}]\label{lm:derivativeControl}
For every $\epsilon_1>0$, there exist $n'\in\mathbb{N}$ and $\delta_0>0$ such that for all $\delta_0>\delta>0$ and $n_1\geq n'$, there exist sets $W^i(\delta)\subset\mathbb{T}^{f_i}$ with $\mu^{f_i}(W^i(\delta))\geq1-\delta^{10}$ and $S^i(n_1)$ with $\mu^{f_i}(S^i(n_1))\geq1-\delta^{3}$ such that  for all $x_1\in S^1(n_1)\cap W^1(\delta)$, $x_2\in S^2(n_1)\cap W^2(\delta)$ and $T\geq n'$, there exists a set $G_T\subset[0,T]$ with $\bar{\lambda}(G_T)\geq T(1-4\log^{-3}T)$ such that for every $t\in G_T$,
\begin{equation}\label{eq:derivativeControl1}
t^{1+|\gamma_1|-\epsilon_1}\leq|f_1'^{(N(x,t))}(\theta_h)|\leq t^{1+|\gamma_1|+\epsilon_1}\text{ if } d_H(\theta,x_{1})\leq(T\log^{2P_1}T)^{-1},
\end{equation}
\begin{equation}\label{eq:derivativeControl2}
t^{1+|\gamma_2|-\epsilon_1}\leq|f_2'^{(M(x,t))}(\xi_h)|\leq t^{1+|\gamma_2|+\epsilon_1}\text{ if } d_H(\xi,x_{2})\leq(T\log^{2P_2}T)^{-1},
\end{equation}
where $N(x,t)$ and $M(x,t)$ are defined in \eqref{eq:specialflow} for respectively $\mathscr{T}^{\alpha_1,\gamma_1}_t$ and $\mathscr{T}^{\alpha_2,\gamma_2}_t$, $P_1=100|\gamma_1|^{-1}$ and $P_2=100|\gamma_2|^{-1}$.
\end{lemma}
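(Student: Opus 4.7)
The plan is to derive the bounds \eqref{eq:derivativeControl1}--\eqref{eq:derivativeControl2} directly from the Denjoy--Koksma inequality \eqref{eq:DenjoyKoksma2}, combined with the asymptotic behaviour $f_i'(x) \sim |x|^{-1+\gamma_i}$ from \eqref{eq:powerBehavior} and the bound \eqref{eq:Ncontrol} coming from $\alpha_i \in \mathscr{D}$. The key quantity to control is the nearest approach $r_N(\theta) := \min_{0 \leq j < N}\|\theta + j\alpha_1\|$ of a rotation orbit of length $N$ to the singularity $0$, both from above and from below, uniformly in $\theta$ running over the $d_H$-ball of radius $(T\log^{2P_1}T)^{-1}$ around $x_{1,h}$. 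The upper bound $r_N(\theta) \lesssim 1/N$ yields the lower bound on $|f_1'^{(N)}(\theta)|$ and will be built into the definition of $W^1(\delta)$; the lower bound $r_N(\theta) \gtrsim 1/(N\log^{c}N)$ yields the upper bound on $|f_1'^{(N)}(\theta)|$ and will hold for $t$ in a large-measure subset $G_T \subset [0,T]$ obtained by excluding moments when the orbit passes too close to $0$. The argument for $\alpha_2, \gamma_2, M(x,t)$ is identical and runs in parallel.

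First I would fix $n' = n'(\epsilon_1)$ large enough so that $\log q_{n'} \geq \max\{1000, C(\alpha_i)\}$ and so that every polylogarithmic factor of $q_n$ arising below is dominated by $q_n^{\epsilon_1/2}$ for $n \geq n'$; then fix $\delta_0 > 0$ small enough that Lemma~\ref{lm:Snmeaure} delivers $\mu^{f_i}(S^i(n_1)) \geq 1 - \delta^3$ for all $0 < \delta < \delta_0$, $n_1 \geq n'$. Writing $q_n^{(1)} = q_n$ and $q_n^{(2)} = q_n'$, define
\[
W^i(\delta) := \bigcap_{n \geq n_1} \Bigl\{ x \in \mathbb{T}^{f_i} : \min_{0 \leq j \leq q_n^{(i)}} \|x_h + j\alpha_i\| \leq \log^{P_i}\!q_n^{(i)}/q_n^{(i)} \Bigr\}.
\]
Since the Lebesgue measure of points whose rotation orbit of length $q_n^{(i)}$ misses the arc of radius $r$ around $0$ is of order $(1 - q_n^{(i)} r)^+$, the complement of each set in the intersection has $\mu^{f_i}$-measure $O(\log^{-P_i} q_n^{(i)})$; using the (at least) exponential growth of $q_n^{(i)}$ and enlarging $n_1$ if necessary, $\mu^{f_i}(W^i(\delta)) \geq 1 - \delta^{10}$.

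Now fix $x_1 \in S^1(n_1) \cap W^1(\delta)$ and $T \geq n'$, and let $n$ be the integer with $q_n \leq N(x_1,T) < q_{n+1}$. Applying \eqref{eq:DenjoyKoksma1} to $x_1 \in S^1(n_1)$ and using $\alpha_1 \in \mathscr{D}$ one obtains $N(x_1,t) \in [t/\log^{c_1}t,\; t\log^{c_1}t]$ for all $t \in [1,T]$, which is the bridge between estimates in the variable $N$ and estimates in $t$. Define $G_T$ to be the complement in $[0,T]$ of a $(\max_{\mathbb{T}} f_1 + 1)$-neighbourhood of the set of times $t$ at which $T_{\alpha_1}^j x_{1,h}$ for some $j \leq q_n$ lies within $(T\log^{2P_1-12}T)^{-1}$ of $0$, together with the analogous exclusion built from $\alpha_2, x_2$. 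By \eqref{eq:Ncontrol} the number of such close approaches in $[0,q_n]$ is at most $O(q_n \log^{-3P_i}T)$, so $\bar{\lambda}(G_T^c) \leq 4T\log^{-3}T$. Then for any $\theta$ with $d_H(\theta, x_1) \leq (T\log^{2P_1}T)^{-1}$ and any $t \in G_T$, Lemma~\ref{lm:differentialF} guarantees $|\theta_h + j\alpha_1 - (x_{1,h} + j\alpha_1)| \leq (T\log^{2P_1-12}T)^{-1}$ for all $j \leq N(x_1,t)$; combined with the definition of $G_T$ this forces $r_{N(x_1,t)}(\theta_h) \geq \tfrac{1}{2}(T\log^{2P_1-12}T)^{-1}$, while the assumption $x_1 \in W^1(\delta)$ yields $r_{N(x_1,t)}(\theta_h) \leq \log^{P_1+1}T / T$. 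Inserting these bounds into \eqref{eq:DenjoyKoksma2} together with \eqref{eq:powerBehavior}, and absorbing all polylogarithmic slack into $t^{\pm\epsilon_1}$ via the choice of $n'$, gives \eqref{eq:derivativeControl1}; the identical argument for $\alpha_2$ gives \eqref{eq:derivativeControl2}.

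The main obstacle will be engineering $G_T$ so that it simultaneously has measure at least $T(1 - 4\log^{-3}T)$ and controls the nearest approach \emph{uniformly} over the whole $d_H$-ball around $x_1$ (not just at $x_1$): this is precisely why $P_i$ is taken as large as $100|\gamma_i|^{-1}$, so that the $\log^{12}$-gap supplied by Lemma~\ref{lm:differentialF} is comfortably absorbed, and why \eqref{eq:Ncontrol} is required, as it quantifies how often a rotation orbit of length $T$ can visit a small neighbourhood of the singularity. A secondary delicate point is the near-cancellation in \eqref{eq:DenjoyKoksma2}, where the dominant term $f_1'(r_N)$ and the error $q_n^{1+|\gamma_1|}$ are of comparable polynomial order; restricting to $t \in G_T$ guarantees that $r_N$ is small enough that $f_1'(r_N)$ dominates.
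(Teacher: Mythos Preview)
The paper does not contain a proof of this lemma; it is quoted verbatim as Lemma~2.9 of \cite{KanigowskiWei1} and used as a black box (the only additional information is Remark~\ref{remark}, which records that the upper bounds in \eqref{eq:derivativeControl1}--\eqref{eq:derivativeControl2} in fact hold for all $t\in[0,T]$, not only for $t\in G_T$). There is therefore no in-paper argument to compare against.

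Your outline is the natural and essentially the only route: bound $r_N(\theta):=\min_{0\le j<N}\|\theta_h+j\alpha_i\|$ from above (via a ``good hitting'' set $W^i(\delta)$) and from below (via the exclusion set that defines $G_T$ and the fact that $x\in S^i(n_1)$), and then plug into \eqref{eq:DenjoyKoksma2}. This is almost certainly what \cite{KanigowskiWei1} does. Two places in your sketch would need tightening before it is a proof. First, your invocation of Lemma~\ref{lm:differentialF} to get $|\theta_h+j\alpha_1-(x_{1,h}+j\alpha_1)|\le(T\log^{2P_1-12}T)^{-1}$ is vacuous: rotations are isometries, so this is just $|\theta_h-x_{1,h}|$. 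What Lemma~\ref{lm:differentialF} actually delivers is that $0\notin[\theta_h+j\alpha_1,x_{1,h}+j\alpha_1]$, so that the closest approach of $\theta$'s orbit to $0$ is within $d_H(\theta,x_1)$ of that of $x_1$'s orbit; combined with $x_1\in S^1(n_1)$ this is what gives the uniform-in-$\theta$ lower bound on $r_N$ and hence the upper bound in \eqref{eq:derivativeControl1} (consistent with Remark~\ref{remark} that the upper bound needs no $G_T$). Second, the lower bound in \eqref{eq:DenjoyKoksma2} is a difference of two terms of the same polynomial order $N^{1+|\gamma_1|}$, so to extract $t^{1+|\gamma_1|-\epsilon_1}$ you need $r_N(\theta)$ genuinely smaller than $1/N$ by a factor that beats the constant $8|\gamma_1|$; your set $W^i(\delta)$ only supplies this for orbit lengths equal to a denominator $q_n^{(i)}$, and you must argue (via $\alpha_i\in\mathscr{D}$) that it persists for $N\in[q_n^{(i)},q_{n+1}^{(i)})$ and for $\theta$ in the whole $d_H$-ball---this is where the set $G_T$ is actually needed, to rule out near-cancellation when the minimum $r_N$ is realised on one side of $0$ and the next-closest approach on the other.
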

\begin{remark}\label{remark}
In fact, based on the proof of Lemma $2.9$ in \cite{KanigowskiWei1}, the upper bound of the derivatives of Birkhoff sums holds for any $t\in[0,T]$.
\end{remark}

By using Lemma \ref{lm:derivativeControl} to control the Birkhoff sums' derivatives, we have the following lemma to describe the behavior of Kakutani box under the action of $\phi_t$. More precisely, this lemma shows that if two points $x=(x_1,x_2),y=(y_1,y_2)$ are close, they will stay in same atom of the partition by adding a small vertical perturbation to one of the two points. This phenomenon is natural in general case but due to the distance and partition in our setting, there is a possibility that two points may be close at time $t$ but $x^t_{1,v}$ is close to the roof function and $x^t_{2,v}$ is close to the base and thus cannot in the same atom of the partition. From now on, we define \begin{equation}\label{def:ebound}
\epsilon_{bound}=\min\{\frac{1}{100},\frac{1}{100}\min_{x\in\mathbb{T}}f_1,\frac{1}{100}\min_{x\in\mathbb{T}}f_2\}.
\end{equation}

Before we establish the lemmas to control the Birkhoff sums' derivatives, we state the following useful lemma to simplify our proof:
\begin{lemma}\label{lm:hittingTimeEstimates}
For every $\epsilon,\epsilon_1\in(0,\epsilon_{bound})$, there exists $N_{0,\epsilon}\in\mathbb{R}$ and a set $\bar{E}_{\epsilon}\subset\mathbb{T}^{f_1}\times\mathbb{T}^{f_2}$ with $\tilde{\mu}(\bar{E}_{\epsilon})>1-\epsilon$ such that for every $R>N_{0,\epsilon}$, $x\in\bar{E}_{\epsilon}$, $y\in\mathbb{T}^{f_1}\times\mathbb{T}^{f_2}$ and $a_1,a_2,a_3>0$ with $$a_1-(1+|\gamma_1|+\epsilon_1)(1+\frac{1}{50(1+|\gamma_1|)}\epsilon_1)a_3>0, \ \ a_2-(1+|\gamma_2|+\epsilon_1)(1+\frac{1}{50(1+|\gamma_1|)}\epsilon_1)a_3>0,$$ if $d_H(x_1,y_1)\in[0, \epsilon^2R^{-a_1}]$, $d_H(x_2,y_2)\in[0, \epsilon^2R^{-a_2}]$ and $d_V(x_1,y_1),d_V(x_2,y_2)\in[0,\epsilon^2]$, then we have:
\begin{enumerate}[(1)]
\item For any $L\in[0,R^{a_3}]$, we have
$$N_1(x_1,L)-2\leq N_1(y_1,L)\leq N_1(x_1,L)+2,$$
$$N_2(x_2,L)-2\leq N_2(y_2,L)\leq N_2(x_2,L)+2.$$
\item There exists a set $D_R(x)\subset[0,R^{a_3}]$ with $\bar{\lambda}(D_R(x))\geq(1-\frac{1}{10}\epsilon)R^{a_3}$ such that for every $L\in D_R(x)$, we have $$N_1(y_1,L)=N_1(x_1,L),\text{ and }N_2(y_2,L)=N_2(x_2,L).$$
\end{enumerate}
\end{lemma}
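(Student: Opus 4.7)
The plan is to control the Birkhoff-sum differences $|f_i^{(N)}(x_{i,h})-f_i^{(N)}(y_{i,h})|$ via the mean value theorem and the derivative bound of Lemma~\ref{lm:derivativeControl}, and then convert the resulting estimate into statements about the hitting counts $N_i(\cdot,L)$.

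\emph{Construction of $\bar{E}_\epsilon$ and the key Birkhoff-sum bound.} Given $\epsilon\in(0,\epsilon_{bound})$, pick $\delta=\delta(\epsilon)$ small enough and $n_1=n_1(\delta)$ large enough that Lemmas~\ref{lm:Snmeaure} and~\ref{lm:derivativeControl} yield $\mu^{f_i}(S^i(n_1)\cap W^i(\delta))\geq 1-\epsilon/4$, and set
\[
\bar{E}_\epsilon:=(S^1(n_1)\cap W^1(\delta))\times(S^2(n_1)\cap W^2(\delta)),\qquad\tilde\mu(\bar{E}_\epsilon)>1-\epsilon.
\]
Fix $x\in\bar{E}_\epsilon$, $y$ as in the hypothesis, and write $T:=R^{a_3}$. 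Since $a_1>a_3$, the arc from $x_{1,h}$ to $y_{1,h}$ has length at most $\epsilon^2R^{-a_1}$, so Lemma~\ref{lm:differentialF} prevents the singularity of $f_1$ from entering the first $R^{a_1}/\log^{12}R\gg 2T/\min_\T f_1$ iterates of $T_{\alpha_1}$ applied to this arc. Hence $f_1^{(N)}$ is $C^1$ on the arc for all $N\leq N_1(x_1,T)+2$, and the mean value theorem produces $\theta_N$ in the arc with
\[
|f_1^{(N)}(x_{1,h})-f_1^{(N)}(y_{1,h})|=d_H(x_1,y_1)\cdot|f_1'^{(N)}(\theta_N)|.
\]
Setting $t_N:=f_1^{(N)}(x_{1,h})-x_{1,v}\leq 2T$ and applying Lemma~\ref{lm:derivativeControl} with Remark~\ref{remark} (the horizontal bound $d_H(\theta_N,x_1)\leq \epsilon^2 R^{-a_1}\leq(2T\log^{2P_1}2T)^{-1}$ follows from $a_1>a_3$), we obtain $|f_1'^{(N)}(\theta_N)|\leq(2T)^{1+|\gamma_1|+\epsilon_1}$. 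The hypothesis $a_1>(1+|\gamma_1|+\epsilon_1)(1+\tfrac{\epsilon_1}{50(1+|\gamma_1|)})a_3$ then yields
\[
|f_1^{(N)}(x_{1,h})-f_1^{(N)}(y_{1,h})|\leq \epsilon^2\cdot 2^{1+|\gamma_1|+\epsilon_1}\cdot R^{a_3(1+|\gamma_1|+\epsilon_1)-a_1}\leq\epsilon^2
\]
for $R\geq N_{0,\epsilon}$, the exponent of $R$ being a strictly negative constant. The analogous bound holds for $f_2$.

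\emph{Parts (1) and (2).} By the definition of $N_i$, the function $L\mapsto N_i(x_i,L)$ (resp.\ $L\mapsto N_i(y_i,L)$) changes value exactly at the roof-crossing times $f_i^{(k)}(x_{i,h})-x_{i,v}$ (resp.\ $f_i^{(k)}(y_{i,h})-y_{i,v}$). The Birkhoff-sum bound above combined with $|x_{i,v}-y_{i,v}|\leq\epsilon^2$ shows that the corresponding crossing-time sequences differ entrywise by at most $2\epsilon^2$. Since consecutive crossings are at distance $\geq\min_\T f_i\gg 2\epsilon^2$, we conclude $|N_i(x_i,L)-N_i(y_i,L)|\leq 1<2$ for all $L\in[0,R^{a_3}]$, proving (1). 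For (2), the counts agree \emph{exactly} except when $L$ lies in the $2\epsilon^2$-neighborhood of some crossing time; since there are at most $R^{a_3}/\min_\T f_i+1$ such times per flow in $[0,T]$, the bad set of $L$ has Lebesgue measure at most
\[
\sum_{i=1}^2 4\epsilon^2\left(\frac{R^{a_3}}{\min_\T f_i}+1\right)\leq\frac{\epsilon R^{a_3}}{10},
\]
where we used $\epsilon<\epsilon_{bound}\leq\frac{1}{100}\min_\T f_i$. Letting $D_R(x)$ be the complement gives (2).

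\emph{Main obstacle.} The technical crux is propagating Lemma~\ref{lm:derivativeControl} uniformly along the whole arc from $x_{i,h}$ to $y_{i,h}$, for every $N$ up to $\simeq R^{a_3}/\min_\T f_i$. This needs (a) the arc to avoid the singularity at each iterate (Lemma~\ref{lm:differentialF}, using $a_1,a_2>a_3$); and (b) the $\epsilon_1$-margin in the hypothesis, which turns $R^{a_3(1+|\gamma_i|+\epsilon_1)}$ into a quantity strictly smaller than $R^{a_i}$ and so delivers a uniformly negative power of $R$ in the final Birkhoff-sum estimate.
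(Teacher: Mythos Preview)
Your argument follows the same strategy as the paper: build $\bar E_\epsilon$ as a product of $S^i(n_1)\cap W^i(\delta)$, use Lemma~\ref{lm:differentialF} to make $f_i^{(N)}$ differentiable on the arc, bound $|f_i'^{(N)}(\theta)|$ via Lemma~\ref{lm:derivativeControl} and Remark~\ref{remark}, and convert the resulting Birkhoff-sum estimate into a comparison of hitting numbers. The Part~(2) argument (bad set is the $2\epsilon^2$-neighbourhood of the crossing times) is exactly the paper's construction of $D_R(x)$ via the sets $B^1_{\epsilon^2},B^2_{\epsilon^2}$, just phrased in the time variable.

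One step needs justification. You assert $t_N:=f_1^{(N)}(x_{1,h})-x_{1,v}\leq 2T$ for all $N\leq N_1(x_1,T)+2$, but for $N=N_1(x_1,T)+1$ and $N_1(x_1,T)+2$ this amounts to bounding the single roof values $f_1(T_{\alpha_1}^{N_1(x_1,T)}x_{1,h})$ and $f_1(T_{\alpha_1}^{N_1(x_1,T)+1}x_{1,h})$, which is \emph{not} automatic since $f_1$ is unbounded. It is true, but only because $x_1\in S^1(n_1)$ forces $x_{1,h}^L$ to stay outside an interval of size roughly $R^{-(1+\epsilon_1/100(1+|\gamma_1|))a_3}$ around the singularity for all $L\leq T$; this is precisely where the extra factor $(1+\tfrac{\epsilon_1}{50(1+|\gamma_1|)})$ in the hypothesis on $a_1,a_2$ enters. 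The paper makes this explicit in a slightly different way: rather than push Lemma~\ref{lm:derivativeControl} to $N_1(x_1,L)+1$, it bounds the one-step increment $|f_i(T_{\alpha_i}^{N_i(x_i,L)}y_{i,h})-f_i(x_{i,h}^L)|$ directly by the mean value theorem and the bound $|f_i'(\zeta_i)|\leq R^{(1+\epsilon_1/50(1+|\gamma_1|))(1+|\gamma_i|)a_3}$ coming from the $S^i$-structure. Either route closes the gap; you should invoke one of them.
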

\begin{proof}
Pick $\delta\in(0,\min\{\delta_0,\epsilon\})$, $N_{0,\epsilon}=\max\{n',1,\epsilon^{-5}\epsilon_{bound}\}$ and define $\bar{E}_{\epsilon}=(S^1(n')\cap W^1(\delta))\times(S^2(n')\cap W^2(\delta))$, where $\delta_0$, $n'$, $S^i(n')$ and $W^i(\delta)$ are from Lemma \ref{lm:derivativeControl} for $i=1,2$. By construction, we have $\tilde{\mu}(\bar{E}_{\epsilon})>1-\epsilon$.

If $d_H(x_i,y_i)\in[0,R^{-a_i}]$ for $i=1,2$ and also recall that $x\in\bar{E}_{\epsilon}$, then we can use Lemma \ref{lm:differentialF} and thus for any $L\in [0,R]$, we have
$$|f_i^{(N_i(x_i,L))}(y_{i,h})-f_i^{(N_i(x_i,L))}(x_{i,h})|=|f_i'^{(N_i(x_i,L))}(\theta_i)||x_{i,h}-y_{i,h}| \text{ for }i=1,2,$$
where $\theta_i$ is in the shorter one of the arcs given by $[x_{i,h},y_{i,h}]$ for $i=1,2$ and $N_1(x_1,L)$ and $N_2(x_2,L)$ are defined in \eqref{eq:specialflow} for respectively $\mathscr{T}^{\alpha_1,\gamma_1}$ and $\mathscr{T}^{\alpha_2,\gamma_2}$. By Lemma \ref{lm:derivativeControl}, Remark \ref{remark}, $L\in[0,R^{a_3}]$, $|\gamma_2|<|\gamma_1|$ and choice of $a_1$, $a_2$, $a_3$, we have for $i=1,2$:
\begin{equation}\label{eq:OutboxDer1}
\begin{aligned}
|f_i^{(N_i(x_i,L))}(y_{i,h})-f_i^{(N_i(x_i,L))}(x_{i,h})|\leq \epsilon^2 L^{1+|\gamma_i|+\epsilon_1}R^{-a_i}\leq\epsilon^2 R^{a_3(1+|\gamma_i|+\epsilon_1)-a_i}\leq\epsilon^2.
\end{aligned}
\end{equation}

Recall $x\in\bar{E}_{\epsilon}$, definition of $S^1(n')$ (see \eqref{eq:intersectionSets}), definition of $S^2(n')$ (see \eqref{eq:intersectionSets}), definition of $S^1_n$ (see \eqref{eq:DefS1}) and Diophantine condition of $q_n$, we know that $$x_{1,h}^L,x_{2,h}^L\notin[-R^{-(1+\frac{1}{100(1+|\gamma_1|)}\epsilon_1)a_3},R^{-(1+\frac{1}{100(1+|\gamma_1|)}\epsilon_1)a_3}],$$
then we apply \eqref{eq:DefS1} and \eqref{eq:intersectionSets} again, for any $\zeta_i\in[x_{i,h}^L-\epsilon^2R^{-a_i},x_{i,h}^L+\epsilon^2R^{-a_i}]$ where $i=1,2$, we obtain that $$\zeta_1,\zeta_2\notin[-R^{-(1+\frac{1}{50(1+|\gamma_1|)}\epsilon_1)a_3},R^{-(1+\frac{1}{50(1+|\gamma_1|)}\epsilon_1)a_3}],$$ and thus,
\begin{equation}\label{eq:roofDiff}
\begin{aligned}
\epsilon^2|f_i'(\zeta_i)|R^{-a_i}\leq \epsilon^2R^{(1+\frac{1}{50(1+|\gamma_1|)}\epsilon_1)(1+|\gamma_i|)a_3-a_i}\leq\epsilon^2\text{ for }i=1,2.
\end{aligned}
\end{equation}

In fact, \eqref{eq:OutboxDer1} and \eqref{eq:roofDiff} gives the difference of Birkhoff sums at $N_i(x_i,L)+1$ for $i=1,2$:
\begin{equation}\label{eq:SpecialN+1}
\begin{aligned}
|f_i^{(N_i(x_i,L)+1)}&(y_{i,h})-f_i^{(N_i(x_i,L)+1)}(x_{i,h})|\\&=|f_i^{(N_i(x_i,L))}(y_{i,h})-f_i^{(N_i(x_i,L))}(x_{i,h})+f_i(R_{\alpha_i}^{N_i(x_i,L)}y_{i,h})-f_i(x_{i,h}^L)|\\
&\leq|f_i^{(N_i(x_i,L))}(y_{i,h})-f_i^{(N_i(x_i,L))}(x_{i,h})|+|f_i(R_{\alpha_i}^{N_i(x_i,L)}y_{i,h})-f_i(x_{i,h}^L)|\\
&\leq\epsilon^2+|f_i'(\zeta_i)||R_{\alpha_i}^{N_i(x_i,L)}y_{i,h}-x_{i,h}^L|\\
&\leq\epsilon^2+|f_i'(\zeta_i)|\epsilon^2R^{-a_i}\leq2\epsilon^2,
\end{aligned}
\end{equation}
where $\zeta_i\in[x_{i,h}^L-\epsilon^2R^{-a_i},x_{i,h}^L+\epsilon^2R^{-a_i}]$ and line $3$ to line $4$ follows from that $$|R_{\alpha_i}^{N_i(x_i,L)}y_{i,h}-x_{i,h}^L|=|y_{i,h}-x_{i,h}|\leq\epsilon^2R^{-a_i},$$ and $x_{i,h}^L\notin[-R^{-(1+\frac{1}{100(1+|\gamma_1|)}\epsilon_1)a_3},R^{-(1+\frac{1}{100(1+|\gamma_1|)}\epsilon_1)a_3}]$.

Recall the definition of $N_1(x_1,L)$, we have
\begin{equation}\label{eq:specialcontrol}
f_1^{(N_1(x_1,L))}(x_{1,h})\leq L<f_1^{(N_1(x_1,L)+1)}(x_{1,h}).
\end{equation}
Then \eqref{eq:OutboxDer1}, \eqref{eq:SpecialN+1}, \eqref{eq:specialcontrol} and choice of $\epsilon$ imply that
\begin{equation}\label{eq:SecondNECount}
\begin{aligned}
f_1^{(N_1(x_1,L)-1)}(y_{1,h})&<f_1^{(N_1(x_1,L))}(y_{1,h})-\epsilon^2\leq L\\&<f_1^{(N_1(x_1,L)+1)}(y_{1,h})+2\epsilon^2<f_1^{(N_1(x_1,L)+2)}(y_{1,h}),
\end{aligned}
\end{equation}
Since the definition of $N_1(y_1,L)$ implies
\begin{equation}\label{eq:NspecialDef}
f_1^{(N_1(y_1,L))}(y_{1,h})\leq L<f_1^{(N_1(y_1,L)+1)}(y_{1,h})
\end{equation}
Thus \eqref{eq:SecondNECount} and \eqref{eq:NspecialDef} imply
$$N_1(x_1,L)-2\leq N_1(y_1,L)\leq N_1(x_1,L)+2.$$
By similar arguments, we also obtain that
$$N_2(x_2,L)-2\leq N_2(y_2,L)\leq N_2(x_2,L)+2,$$
which finishes the proof of first part of the Lemma.

For any $\epsilon\in(0,\epsilon_{bound})$, define sets $B_{\epsilon^2}^1$ and $B_{\epsilon^2}^2$ as following:
\begin{equation}\label{eq:setB}
\begin{aligned}
B_{\epsilon^2}^1=\{x\in\mathbb{T}^{f_1}\times\mathbb{T}^{f_2}: & \ \ 0\leq x_{1,v}\leq\epsilon^2\text{ or }0\leq x_{2,v}\leq\epsilon^2\},\\
B_{\epsilon^2}^2=\{x\in\mathbb{T}^{f_1}\times\mathbb{T}^{f_2}:& \ \ f_1(x_{1,h})-2\epsilon^2\leq x_{1,v}\leq f_1(x_{1,h})\\
\text{ or }&f_2(x_{2,h})-2\epsilon^2\leq x_{2,v}\leq f_2(x_{2,h})\}.
\end{aligned}
\end{equation}
Notice that $\tilde{\mu}(B_{\epsilon^2}^1\cup B_{\epsilon^2}^2)\leq6\epsilon^2$.

Then for any $x\in \bar{E}_{\epsilon}$, we define
\begin{equation}\label{eq:ChoiceOfX}
D_R(x)=\{t\in[0,R^{a_3}]:x^t\notin(B_{\epsilon^2}^1\cup B_{\epsilon^2}^2)\}.
\end{equation}
It is also worth to point out that if $R^{a_3}>\epsilon^{-5}\epsilon_{bound}$, by the definition of special flow and sets $B_{\epsilon^2}^1$, $B_{\epsilon^2}^2$, we have $\bar{\lambda}(D_R(x))>(1-\frac{1}{10}\epsilon)R^{a_3}$.

Thus for every $L\in D_R(x)$, we have the following estimate of $N_1(x_1,L)$:
\begin{equation}\label{eq:specialEcontrol}
f_1^{(N_1(x_1,L))}(x_{1,h})+\epsilon^2\leq L<f_1^{(N_1(x_1,L)+1)}(x_{1,h})-2\epsilon^2.
\end{equation}
Then \eqref{eq:OutboxDer1}, \eqref{eq:SpecialN+1} and \eqref{eq:specialEcontrol} imply that
\begin{equation}\label{eq:SecondCount}
f_1^{(N_1(x_1,L))}(y_{1,h})\leq L<f_1^{(N_1(x_1,L)+1)}(y_{1,h}),
\end{equation}
which gives $N_1(y_1,L)=N_1(x_1,L)$ by \eqref{eq:specialflow}. By the similar argument, we also obtain that $N_2(y_2,L)=N_2(x_2,L)$ and thus finish the proof of second part of the Lemma.
\end{proof}

Now we have enough preparation to state the control lemma of Birkhoff sums' derivative:

\begin{lemma}\label{lem:linearControl}
For every $\epsilon\in(0,\epsilon_{bound})$, there exists $N_{0,\epsilon}$ such that for every $R>N_{0,\epsilon}$, there exists a set $\bar{E}_{\epsilon}\subset\mathbb{T}^{f_1}\times\mathbb{T}^{f_2}$ with $\tilde{\mu}(\bar{E}_{\epsilon})>1-\epsilon$, such that for every $x\in \bar{E}_{\epsilon}$ and $y\in\mathbb{T}^{f_1}\times\mathbb{T}^{f_2}$, if $x\in\operatorname{Box}^{\operatorname{out}}(y,\epsilon^2,R)$, then for every $L\in [0,R^{\frac{1}{(1-\epsilon_0)(1+|\gamma_1|+2\epsilon_1)}}]$, we have
$$x^{L'}\in\operatorname{Box}^{\operatorname{out}}_{\epsilon}(y^L,\epsilon,R),$$
where $|L'-L|<6\epsilon^2$.
\end{lemma}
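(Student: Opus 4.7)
The plan is to invoke Lemma \ref{lm:hittingTimeEstimates} with the parameters $a_1=a_2=\tfrac{1}{1-\epsilon_0}$ (to match the horizontal bound $\epsilon^2 R^{-\frac{1}{1-\epsilon_0}}$ appearing in $\operatorname{Box}^{\operatorname{out}}(y,\epsilon^2,R)$) and $a_3=\tfrac{1}{(1-\epsilon_0)(1+|\gamma_1|+2\epsilon_1)}$ (matching the allowed range of $L$). The hypothesis $a_i-(1+|\gamma_1|+\epsilon_1)(1+\tfrac{\epsilon_1}{50(1+|\gamma_1|)})a_3>0$ reduces to $(1+|\gamma_1|+\epsilon_1)(1+c\epsilon_1)<1+|\gamma_1|+2\epsilon_1$ with $c=\tfrac{1}{50(1+|\gamma_1|)}$, which is elementary to verify. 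I then take $\bar E_\epsilon$ and $N_{0,\epsilon}$ to be those produced by Lemma \ref{lm:hittingTimeEstimates}; for $x\in\bar E_\epsilon$ this supplies a set $D_R(x)\subset[0,R^{a_3}]$ of measure at least $(1-\tfrac{1}{10}\epsilon)R^{a_3}$ on which the ceiling-counts of $x$ and $y$ coincide in both factors.

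I would split the argument by whether $L\in D_R(x)$. In the first case set $L'=L$; the equalities $N_i(x_i,L)=N_i(y_i,L)$ together with the formula \eqref{eq:specialflow} for a special flow give
\[d_H(x_i^L,y_i^L)=d_H(x_{i,h},y_{i,h})\leq\epsilon^2 R^{-\frac{1}{1-\epsilon_0}},\]
since the rotation is an isometry of $\mathbb{T}$, while the vertical differences are bounded by
\[|x_{i,v}^L-y_{i,v}^L|\leq|x_{i,v}-y_{i,v}|+|f_i^{(N_i)}(x_{i,h})-f_i^{(N_i)}(y_{i,h})|\leq\epsilon^2+\epsilon^2,\]
using the Birkhoff-sum estimate \eqref{eq:OutboxDer1}. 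Both thresholds are well inside those defining $\operatorname{Box}^{\operatorname{out}}(y^L,\epsilon,R)$ (since $\epsilon<\epsilon_{bound}<1$), so $x^L\in\operatorname{Box}^{\operatorname{out}}(y^L,\epsilon,R)\subset\operatorname{Box}^{\operatorname{out}}_\epsilon(y^L,\epsilon,R)$ by taking the $t=0$ slice in Definition \ref{def:outBox}.

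The substantive case is $L\notin D_R(x)$, where I must locate a nearby $L'\in D_R(x)$. From \eqref{eq:ChoiceOfX} and \eqref{eq:setB}, the complement of $D_R(x)$ inside $[0,R^{a_3}]$ is exactly the set of times at which either $x_1^t$ or $x_2^t$ lies within $\epsilon^2$ of the base or within $2\epsilon^2$ of its roof. Since the vertical coordinate moves at unit speed, each roof-crossing of coordinate $i$ contributes a bad interval of length at most $3\epsilon^2$, and by the choice of $\epsilon_{bound}$ in \eqref{def:ebound} we have $\min f_i\geq100\epsilon_{bound}\geq100\epsilon^2$, so consecutive bad intervals for a fixed coordinate are separated by $\geq95\epsilon^2$. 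Consequently a window of length $12\epsilon^2$ around $L$ contains at most one bad interval from each coordinate, of combined length at most $6\epsilon^2$, leaving a good subset of positive measure. Pick any $L'$ in it; then $|L'-L|<6\epsilon^2$ and $L'\in D_R(x)$, so the previous case applied to $L'$ gives $x^{L'}\in\operatorname{Box}^{\operatorname{out}}(y^{L'},\epsilon,R)$. Since $y^{L'}=(y^L)^{L'-L}$ with $|L'-L|<\epsilon$, Definition \ref{def:outBox} delivers $x^{L'}\in\operatorname{Box}^{\operatorname{out}}_\epsilon(y^L,\epsilon,R)$, completing the plan.

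The main obstacle is the combinatorial step in the second case: proving that good times are never more than $6\epsilon^2$ away from any $L$. This relies on the sparseness of roof-crossings, which is exactly why $\epsilon_{bound}$ is chosen so that $\min f_i$ dwarfs $\epsilon^2$. Boundary situations where $L$ is within $6\epsilon^2$ of $0$ or $R^{a_3}$ are handled analogously by shifting the search window to one side.
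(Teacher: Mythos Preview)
Your proposal is correct and follows essentially the same route as the paper: both invoke Lemma~\ref{lm:hittingTimeEstimates} with exactly the parameters $a_1=a_2=\tfrac{1}{1-\epsilon_0}$, $a_3=\tfrac{1}{(1-\epsilon_0)(1+|\gamma_1|+2\epsilon_1)}$, verify $x^{L'}\in\operatorname{Box}^{\operatorname{out}}(y^{L'},\cdot,R)$ for $L'\in D_R(x)$ via the special-flow formula and \eqref{eq:OutboxDer1}, and then handle arbitrary $L$ by locating a nearby $L'\in D_R(x)$ using the fact that bad times form at most two short intervals per window. The only cosmetic difference is that the paper searches in the one-sided window $U_L=[L,L+10\epsilon^2]$ rather than your symmetric window of length $12\epsilon^2$; either way the structure of $B_{\epsilon^2}^1\cup B_{\epsilon^2}^2$ forces a good time within $6\epsilon^2$ of $L$.
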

\begin{proof}
Let $\epsilon_1\in(0,\epsilon_{bound}]$, then we apply Lemma \ref{lm:hittingTimeEstimates} with $\epsilon=\epsilon$, $\epsilon_1=\epsilon_1$, $a_1=a_2=\frac{1}{1-\epsilon_0}$ and $a_3=\frac{1}{(1-\epsilon_0)(1+|\gamma_1|+2\epsilon_1)}$, where $\epsilon_0\in(0,\frac{|\gamma_2|}{2(1+|\gamma_1|)}]$. By the second part of Lemma \ref{lm:hittingTimeEstimates}, we obtain that for any $x\in\bar{E}_{\epsilon}$, $y\in\mathbb{T}^{f_1}\times\mathbb{T}^{f_2}$ and $x\in\operatorname{Box}^{\operatorname{out}}(y,\epsilon^2,R)$ (see Definition \ref{def:outBox}), there exists $D_R(x)\subset[0,R^{\frac{1}{(1-\epsilon_0)(1+|\gamma_1|+2\epsilon_1)}}]$ with $\bar{\lambda}(D_R(x))\geq(1-\frac{1}{10}\epsilon)R^{\frac{1}{(1-\epsilon_0)(1+|\gamma_1|+2\epsilon_1)}}$ such that for any $L\in D_R(x)$, we have
$$N_1(y_1,L)=N_1(x_1,L),\text{ and }N_2(y_2,L)=N_2(x_2,L).$$





By the special flow representation, we have $x_i^{L}=(x_{i,h}+N_i(x_i,L)\alpha_i,L-f_i^{(N(x_i,L))}(x_{i,h}))$ and $y_i^{L}=(y_{i,h}+N_i(y_i,L)\alpha_i,L-f_i^{(N(y_i,L))}(y_{i,h}))$ for $i=1,2$. Then $N_i(x_i,L)=N_i(y_i,L)$ implies $d_H(x_i^L,y_i^L)=d_H(x_i,y_i)=\epsilon^2R^{-\frac{1}{1-\epsilon_0}}<\epsilon R^{-\frac{1}{1-\epsilon_0}}$; $N_i(x_i,L)=N_i(y_i,L)$ and \eqref{eq:OutboxDer1} implies that $d_V(x_i^L,y_i^L)\leq d_V(x_i,y_i)+\epsilon^2<4\epsilon^2$. This gives that $x^L\in\operatorname{Box}^{\operatorname{out}}(y^L,4\epsilon^2,R)$ when $L\in D_R(x)$.

Now for any $L\in[0,R^{\frac{1}{(1-\epsilon_0)(1+|\gamma_1|+2\epsilon_1)}}]$, considering the intervals $U_L=[L,L+10\epsilon^2]$. By the definitions of $B_{\epsilon^2}^1$ and $B_{\epsilon^2}^2$, there exist $u_L\in U_L\cap D_R(x)$. More precisely, $u_L\in U_L\cap D_R(x)$ follows from for any $T>0$:
\begin{equation}
\begin{aligned}
\bar{\lambda}(\{t:T\leq t\leq T+10\epsilon^2,y^t\in(B_{\epsilon^2}^1\cup B_{\epsilon^2}^2)\})\leq 6\epsilon^2,
\end{aligned}
\end{equation}
and the reason we have above inequality is $10\epsilon^2<\epsilon<\min_{i=1,2}\{\min_{\mathbb{T}}f_i\}$.

Notice that $x^{u_L}\in\operatorname{Box}^{\operatorname{out}}(y^{u_L},4\epsilon^2,R)$, $u_L\in U_L$ and $10\epsilon^2<\epsilon<\min_{i=1,2}\{\min_{\mathbb{T}}f_i\}$, we obtain that $x^{u_L}\in \operatorname{Box}^{\operatorname{out}}_{\epsilon}(y^L,\epsilon,R)$ as $\operatorname{Box}^{\operatorname{out}}(y^{u_L},4\epsilon^2,R)\subset\operatorname{Box}^{\operatorname{out}}_{\epsilon}(y^L,\epsilon,R)$. This finishes the proof.
\end{proof}

\section{Proof of Theorem \ref{thm:Main}}\label{sec:mainThm}
In this section we will prove Theorem \ref{thm:Main}. The proof is quite complicated and thus we divided it into several sections.

\paragraph{Outline of the proof:} The proof first shows that $e(\phi_t,\log)\leq 1+|\gamma_1|+|\gamma_2|$ and then $e(\phi_t,\log)\geq \frac{2+4|\gamma_2|+2|\gamma_1\gamma_2|}{(2+|\gamma_2|)(1+|\gamma_1|)}$. The proof of the upper bound follows from the relation between $B_R(y,\epsilon,\mathcal{P}_m)$ and $\operatorname{Box}^{\operatorname{in}}(y,\epsilon^4,\mathcal{P}_m)$. More precisely, $B_R(y,\epsilon,\mathcal{P}_m)$ approximately contains $\epsilon R$ different copies of $\operatorname{Box}^{\operatorname{in}}(y_i,\epsilon^4,\mathcal{P}_m)$ for some $y_i$, since the $f_R$ metric has $\epsilon R$ flexibility along the orbit direction. Once we estimate the lower bound of $\tilde{\mu}(B_R(x,\epsilon,\mathcal{P}_m))$ from the above relation, we will obtain the upper bound by applying Lemma \ref{compef}. As for the estimates of the lower bound, we connect $B_R(y,\epsilon,\mathcal{P}_m)$ with $\operatorname{Box}^{\operatorname{out}}(y,\epsilon,R)$. More precisely, we use Theorem \ref{lm:mainLemma} to show that roughly $B_R(y,\epsilon,\mathcal{P}_m)$ can be covered by $R$ copies of $\operatorname{Box}^{\operatorname{out}}(y_i,\epsilon,R)$ for some $y_i$. Once we obtain the upper bound of $\tilde{\mu}(B_R(y,\epsilon,\mathcal{P}_m))$, we will obtain the lower bound by applying Lemma \ref{compef}.

The following theorem is a crucial step in estimating the lower bounds of Kakutani invariant as it shows that two points in one Kakutani ball implies that the orbits of these two points are polynomial close.

\begin{lm:mainLemma}
For every $\delta>0$ there exists a set $D=D_{\delta}\subset\mathbb{T}^{f_1}\times\mathbb{T}^{f_2}$ with $\tilde{\mu}(D)>1-\delta$ and $m_{\delta},R_{\delta}\in\mathbb{R}$ such that for every $x=(x_1,x_2),y=(y_1,y_2)\in D$, $m\geq m_{\delta}$, $R\geq R_{\delta}$ and $x,y$ are $(\frac{1}{100},\mathcal{P}_m,R)-$matchable, there exists $t_0\in A(x,y)$\footnote{$A(x,y)$ is defined in Definition \ref{def:ematchable}.} such that $x^{t_0}\in (S^1(n')\cap W^1(\delta))\times(S^2(n')\cap W^2(\delta))$ and
\begin{equation}
L_H(t_0)\leq R^{-\frac{1}{1-\epsilon_0}},
\end{equation}
where $\epsilon_0=\frac{|\gamma_2|}{2(1+|\gamma_2|)}-4\epsilon_2\frac{2+|\gamma_2|}{(|\gamma_1|-|\gamma_2|)(1+|\gamma_2|)}$ and $\epsilon_2\in(0,\frac{|\gamma_2|(|\gamma_1|-|\gamma_2|)}{16(2+|\gamma_2|)})$ is a fixed number.
\end{lm:mainLemma}

\begin{remark}
The particular choices of $\epsilon_0$ and $\epsilon_2$ in Theorem \ref{lm:mainLemma} are because we want to obtain the optimal lower bound of Kakutani invariant. The restrictions of the choices of $\epsilon_0$ and $\epsilon_2$ are the opposite inequalities of \eqref{eq:restrictionChoice1} and \eqref{eq:restrictionChoice2}.
\end{remark}

Before we prove Theorem \ref{lm:mainLemma}, let us give a conditional proof of Theorem \ref{thm:Main} first.
\begin{proof}[Proof of Theorem \ref{thm:Main}]
We will prove the upper bound from a specific matching construction and then prove the lower bound by Theorem \ref{lm:mainLemma}.

\noindent\textbf{Upper bound estimate:}

Fix $m\in\mathbb{N}$ and let $0<\epsilon<\min\{\frac{1}{100},\frac{1}{100}\min_{i=1,2}\{\min_{x\in\mathbb{T}}f_i\},\frac{1}{m^3}\}$. Suppose $V_{m}^{\epsilon^2}$ is the $\epsilon^2$ neighborhood of the boundary of $\mathcal{P}_m$. As these boundaries are $C^1$ it implies that $\tilde{\mu}(V_m^{\epsilon^2})=O(m^2\epsilon^4)$. Then we define sets $B_{\epsilon^2}^1$ and $B_{\epsilon^2}^2$ as in \eqref{eq:setB}.

By applying the ergodic theorem to $\phi_t$ and the set $V^{\epsilon^2}_m\cup B_{\epsilon^2}^1\cup B_{\epsilon^2}^2$, we obtain there exists a set $E'_{\epsilon}$ with $\tilde{\mu}(E'_{\epsilon})>1-m\epsilon^2$ and a number $N_{\epsilon}>0$ such that for every $R\geq N_{\epsilon}$ and every $y\in E'_{\epsilon}$, we have
\begin{equation}
\bar{\lambda}(\{t\in[0,R]:y^t\in V^{\epsilon^2}_m\cup B_{\epsilon^2}^1\cup B_{\epsilon^2}^2\})\leq\frac{\epsilon R}{2}.
\end{equation}

For every $\epsilon_1>0$, let $n'$, $\delta$, $S^i(n')$ and $W^i(\delta)$ be defined as in Lemma \ref{lm:derivativeControl} for $i=1,2$. Then define set $E_{\epsilon}=E'_{\epsilon}\cap\bar{E}_{\epsilon}$.


Also notice that for every $\gamma_1\in(-1,0)$, there exists $C_{\gamma_1}>0$ such that $t^{|\gamma_1|/2}>\log^{11}t$ when $t\geq C_{\gamma_1}$. Finally let $C_0>0$ be the constant such that $\frac{t}{\log^6t}$ is increasing for $t\geq C_0$. Then define $C_{\alpha_1,\gamma_1}=(\max\{N_{\alpha_1},C_{\gamma_1},C_0,1\})^{\frac{1}{1-|\gamma_1|}}$, where $N_{\alpha_1}$ is defined in \eqref{eq:Ncontrol} by letting $\alpha=\alpha_1$.

Our estimate of the upper bound will follow from the following two lemmas:
\begin{lemma}\label{lm:upperBound1}
For every $y\in E_{\epsilon}$ and every $R\geq \max\{N_{\epsilon},C_{\alpha_1,\gamma_1}\}$ and all $x\in\mathbb{T}^{f_1}\times\mathbb{T}^{f_2}$, if there exists $p\in[0,\epsilon^3R]$ such that $x^p\in \operatorname{Box}^{\operatorname{in}}(y,\epsilon^4,R)$, then $x\in B_R(y,\epsilon,\mathcal{P}_m)$.
\end{lemma}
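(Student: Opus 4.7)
The plan is to build an explicit $(\epsilon,\mathcal{P}_m,R)$-matching between $I_R(x)$ and $I_R(y)$ via the time shift $h(t):=t-p$. Since $p\in[0,\epsilon^3 R]$, this $h$ sends some $A\subset[p,R]$ bijectively onto $A'\subset[0,R-p]$, and because $h'\equiv 1$ the derivative condition $|h'-1|<\epsilon$ is automatic. The whole task therefore reduces to producing $A$ with $\bar\lambda(A)>(1-\epsilon)R$ on which $\mathcal{P}_m(x^t)=\mathcal{P}_m(y^{t-p})$.

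To produce such an $A$, I would apply the second part of Lemma \ref{lm:hittingTimeEstimates} with the lemma's base point equal to $y\in\bar E_\epsilon$, the perturbed point equal to $x^p$, and parameters $a_1=1+|\gamma_1|+2\epsilon_1$, $a_2=1+|\gamma_2|+2\epsilon_1$, $a_3=1$, for a small fixed $\epsilon_1>0$. The inclusion $x^p\in\operatorname{Box}^{\operatorname{in}}(y,\epsilon^4,R)$ supplies $d_H(y_i,x_i^p)\leq\epsilon^4 R^{-a_i}<\epsilon^2 R^{-a_i}$ and $d_V(y_i,x_i^p)\leq\epsilon^4<\epsilon^2$, which is strictly stronger than the lemma's hypothesis; the arithmetic inequalities on $(a_1,a_2,a_3)$ are easily verified once $\epsilon_1$ is small. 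The lemma then returns a set $D_R(y)\subset[0,R]$ with $\bar\lambda(D_R(y))\geq(1-\epsilon/10)R$ on which $N_i(x_i^p,L)=N_i(y_i,L)$ for $i=1,2$. Combined with the horizontal identity $x_{i,h}^{p+L}-y_{i,h}^L=x_{i,h}^p-y_{i,h}$ and the derivative estimate \eqref{eq:OutboxDer1}, equality of the return counts upgrades to
\[
d_H(x_i^{p+L},y_i^L)\leq\epsilon^4 R^{-(1+|\gamma_i|+2\epsilon_1)},\qquad d_V(x_i^{p+L},y_i^L)\leq 2\epsilon^4,
\]
so $x^{p+L}$ lies within $O(\epsilon^4)$ of $y^L$ for every $L\in D_R(y)$.

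To finish, I would use that $y\in E_\epsilon\subset E_\epsilon'$ implies $\bar\lambda\{L\in[0,R]:y^L\in V_m^{\epsilon^2}\}\leq\epsilon R/2$, where $V_m^{\epsilon^2}$ is the $\epsilon^2$-neighborhood of the boundary of $\mathcal{P}_m$. Setting
\[
\tilde D:=\bigl(D_R(y)\setminus\{L:y^L\in V_m^{\epsilon^2}\}\bigr)\cap[0,R-p],
\]
one has $\bar\lambda(\tilde D)\geq(1-\epsilon/10)R-\epsilon R/2-\epsilon^3 R>(1-\epsilon)R$ once $\epsilon$ is small. For each $L\in\tilde D$ the point $y^L$ is at distance at least $\epsilon^2$ from every boundary hypersurface of $\mathcal{P}_m$, while $x^{p+L}$ is $O(\epsilon^4)$-close to $y^L$; hence both points lie in the same atom. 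Taking $A:=p+\tilde D$ and $h(t)=t-p$ then gives a valid $(\epsilon,\mathcal{P}_m,R)$-matching, so $f_R(x,y,\mathcal{P}_m)<\epsilon$ and $x\in B_R(y,\epsilon,\mathcal{P}_m)$.

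There is no single deep difficulty here; the main obstacle is the careful bookkeeping of scales. The exponents $\epsilon,\epsilon^2,\epsilon^3,\epsilon^4$ attached respectively to the matching radius, the partition-boundary thickness, the shift $p$, and the Box-in radius are tuned exactly so that the $O(\epsilon^4)$ closeness of $x^{p+L}$ to $y^L$ beats the $\epsilon^2$ safety margin from the partition, while the combined measure loss $\epsilon^3 R+\epsilon R/10+\epsilon R/2$ stays below $\epsilon R$. The threshold $R\geq\max\{N_\epsilon,C_{\alpha_1,\gamma_1}\}$ is what is needed for the ergodic-average estimate defining $E_\epsilon'$ and the Diophantine hypotheses underlying Lemma \ref{lm:hittingTimeEstimates} to kick in.
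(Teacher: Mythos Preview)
Your proposal is correct and follows essentially the same route as the paper: apply the second part of Lemma~\ref{lm:hittingTimeEstimates} with $a_1=1+|\gamma_1|+2\epsilon_1$, $a_2=1+|\gamma_2|+2\epsilon_1$, $a_3=1$ to equalize the return counts on a set $D_R(y)$ of near-full measure, combine this with the ergodic bound on visits to $V_m^{\epsilon^2}$ coming from $y\in E_\epsilon'$, and use the shift $t\mapsto t-p$ as the matching. The only cosmetic difference is that the paper substitutes $\epsilon^2$ for the $\epsilon$-parameter in Lemma~\ref{lm:hittingTimeEstimates} (getting $\bar\lambda(D_R(y))\geq(1-\tfrac{1}{10}\epsilon^2)R$) whereas you invoke it with $\epsilon$ itself; both choices leave ample room in the final measure inequality.
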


\begin{lemma}\label{lm:upperBound2}
For every $y\in E_{\epsilon}$, every $R\geq \max\{N_{\epsilon},C_{\alpha_1,\gamma_1}\}$ and every $p,q\in[0,\epsilon^3R]$ satisfying $|p-q|\geq C_{\alpha_1,\gamma_1}$, we have
$$\phi_{-p}(\operatorname{Box}^{\operatorname{in}}(y,\epsilon^4,R))\bigcap \phi_{-q}(\operatorname{Box}^{\operatorname{in}}(y,\epsilon^4,R))=\emptyset.$$
\end{lemma}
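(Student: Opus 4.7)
The plan is to argue by contradiction. Suppose there exists $x$ lying in both $\phi_{-p}(\operatorname{Box}^{\operatorname{in}}(y,\epsilon^4,R))$ and $\phi_{-q}(\operatorname{Box}^{\operatorname{in}}(y,\epsilon^4,R))$. Without loss of generality $q > p$; set $s := q - p \in [C_{\alpha_1,\gamma_1}, \epsilon^3 R]$ and $z := x^p$, so that both $z$ and $z^s = \phi_s(z)$ lie in $\operatorname{Box}^{\operatorname{in}}(y,\epsilon^4,R)$. Comparing the first horizontal coordinates of $z$ and $z^s$ and using that the horizontal radius of the in-box in the first factor equals $\epsilon^4 R^{-(1+|\gamma_1|+2\epsilon_1)}$, the special-flow representation of $\mathscr{T}^{\alpha_1,\gamma_1}$ yields
$$\|N_1 \alpha_1\| \leq 2\epsilon^4 R^{-(1+|\gamma_1|+2\epsilon_1)},$$
where $N_1 := N_1(z_1,s)$ is the number of first-factor wrap-arounds during the time interval $[0,s]$.

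I would first dispose of the degenerate case $N_1 = 0$: then $z_1^s$ shares the first horizontal coordinate with $z_1$ and its vertical coordinate is $z_{1,v}+s$, so the vertical distance between $z_1$ and $z_1^s$ is exactly $s \geq C_{\alpha_1,\gamma_1} \geq 1$, which exceeds $2\epsilon^4$ and prevents both points from lying in the vertically $\epsilon^4$-thin in-box around $y$. For $1 \leq N_1 < N_{\alpha_1}$ the finitely many values $\|N\alpha_1\|$ are uniformly bounded below by some $c(\alpha_1) > 0$, and once $R$ is large enough that $2\epsilon^4 R^{-(1+|\gamma_1|+2\epsilon_1)} < c(\alpha_1)$ these $N_1$ are also excluded. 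Hence we may assume $N_1 \geq N_{\alpha_1}$, and the Diophantine lower bound \eqref{eq:Ncontrol} for $\alpha_1 \in \mathscr{D}$ combined with the displayed inequality gives
$$N_1 \log^{10} N_1 \geq \frac{R^{1+|\gamma_1|+2\epsilon_1}}{2\epsilon^4}.$$

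To contradict this I would bound $N_1$ from above in terms of $s$. Let $s_0$ be such that $N_1 \in [q_{\alpha_1,s_0}, q_{\alpha_1,s_0+1}]$. From $f_1^{(N_1)}(z_{1,h}) \leq s$ and the Denjoy--Koksma lower bound \eqref{eq:DenjoyKoksma1} one extracts $q_{\alpha_1,s_0} \leq 3s$, and the growth condition defining $\mathscr{D}$ then yields $N_1 \leq q_{\alpha_1,s_0+1} \leq q_{\alpha_1,s_0}\log^{4}q_{\alpha_1,s_0} \leq 3s\log^{4}(3s)$. Substituting this back together with $s \leq \epsilon^3 R$ produces an inequality of the form
$$R^{|\gamma_1|+2\epsilon_1} \leq C \epsilon^{-7} \log^{14} R$$
for some constant $C = C(\alpha_1,\gamma_1)$, which fails once $R \geq C_{\alpha_1,\gamma_1}$ because the exponent $\tfrac{1}{1-|\gamma_1|}$ in the definition of $C_{\alpha_1,\gamma_1}$ (together with $C_{\gamma_1}$) guarantees $R^{|\gamma_1|/2} > \log^{11} R$, and hence $R^{|\gamma_1|+2\epsilon_1}$ dominates any polylogarithmic factor in $R$.

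The main point deserving care is the bookkeeping of thresholds: $C_{\alpha_1,\gamma_1}$ must simultaneously absorb (i) the requirement $N_1 \geq N_{\alpha_1}$ needed for the Diophantine estimate, (ii) the asymptotic $q_{\alpha_1,s_0+1} \leq q_{\alpha_1,s_0}\log^{4}q_{\alpha_1,s_0}$ in the relevant range of $s_0$, and (iii) the eventual domination $R^{|\gamma_1|+2\epsilon_1} > C\epsilon^{-7}\log^{14}R$. The essential ingredient is the interplay of the Diophantine lower bound on $\|N\alpha_1\|$ with the Denjoy--Koksma upper bound on $N_1$, both of which are available precisely because $\alpha_1 \in \mathscr{D}$. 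Note that only the first-factor information is used: since $|\gamma_1| > |\gamma_2|$ the in-box is strictly thinner in the first factor, so the first-factor horizontal estimate already delivers the stronger (and sufficient) constraint.
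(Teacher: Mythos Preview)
Your argument is correct and follows the same contradiction scheme as the paper: both reduce to showing that the horizontal displacement $\|N_1\alpha_1\|$ in the first factor cannot simultaneously be $\leq 2\epsilon^4 R^{-(1+|\gamma_1|+2\epsilon_1)}$ (from the in-box) and $\geq (N_1\log^{10}N_1)^{-1}$ (from the Diophantine condition \eqref{eq:Ncontrol}), once $N_1$ is controlled above by a quantity of order $s\leq \epsilon^3 R$.

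The two proofs differ in their peripheral bookkeeping. To ensure $N_1\geq N_{\alpha_1}$ (so that \eqref{eq:Ncontrol} applies), the paper transfers from $N_1(x_1,\cdot)$ to $N_1(y_1,\cdot)$ via Lemma~\ref{lm:hittingTimeEstimates} and then invokes the lower bound $N_1(y_1,L)\geq L/\log^5 L$ (equation~(7.1) of \cite{KanigowskiWei1}), exploiting that $y\in E_\epsilon$ lies in the good set; you instead dispose of $N_1=0$ by the vertical-coordinate argument and of $1\leq N_1<N_{\alpha_1}$ by the uniform positive lower bound on the finitely many $\|N\alpha_1\|$. Your route is more self-contained (no external reference, no need for Lemma~\ref{lm:hittingTimeEstimates}) but requires an additional largeness condition on $R$ to beat $c(\alpha_1)=\min_{1\leq N<N_{\alpha_1}}\|N\alpha_1\|$, which is not literally encoded in $C_{\alpha_1,\gamma_1}$ as defined; this is the threshold caveat you flag yourself. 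For the upper bound on $N_1$, the paper uses the trivial estimate $N_1\leq (\epsilon_{bound})^{-1}s$ coming from $\min_{\mathbb{T}}f_1>0$, which is both simpler and sharper than your Denjoy--Koksma route $N_1\leq 3s\log^4(3s)$; you could replace your second step by this one-line bound with no loss.
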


Before we prove these lemmas, we show at first how these lemmas implies the upper bound.

Take $y\in E_{\epsilon}$, by Lemma \ref{lm:upperBound1} it follows that
$$\bigcup_{p\in[0,\epsilon^3R]}\phi_{-p}(\operatorname{Box}^{\operatorname{in}}(y,\epsilon^4,R))\subset B_R(y,\epsilon,\mathcal{P}_m).$$

Thus by Lemma \ref{lm:upperBound2} and $\epsilon<\frac{1}{100}\min_{i=1,2}\{\min_{\mathbb{T}}f_i\}$, we obtain:
\begin{equation}
\begin{aligned}
\tilde{\mu}(B_R(y,\epsilon,\mathcal{P}_m))\geq&\tilde{\mu}\left(\bigcup_{p\in[0,\epsilon^3R]}\phi_{-p}(\operatorname{Box}^{\operatorname{in}}(y,\epsilon^4,R))\right)\\
\overset{\text{Lemma \ref{lm:upperBound2}}}{\geq}& C_{\alpha_1,\gamma_1}^{-1}\epsilon^4R\tilde{\mu}(\operatorname{Box}^{\operatorname{in}}(y,\epsilon^4,R))\\
\overset{\text{Definition \ref{def:outBox}}}{\geq} &4C_{\alpha_1,\gamma_1}^{-1}\epsilon^4R\epsilon^{16}R^{-(1+|\gamma_1|+2\epsilon_1)}R^{-(1+|\gamma_2|+2\epsilon_1)}\\
=&4C_{\alpha_1,\gamma_1}^{-1}\epsilon^{20}R^{-(1+|\gamma_1|+|\gamma_2|+4\epsilon_1)}.\\
\end{aligned}
\end{equation}

Since this holds for every $y\in E_{\epsilon}$ and $\tilde{\mu}(E_{\epsilon})>1-\epsilon$, it follows from Lemma \ref{compef}, there exists some $C(\epsilon,\alpha_1,\gamma_1)>0$ depending only on $\epsilon$, $\alpha_1$ and $\gamma_1$ such that
\begin{equation}
K_R(5\epsilon,\mathcal{P}_m)\leq C(\epsilon,\alpha_1,\gamma_1)R^{1+|\gamma_1|+|\gamma_2|+4\epsilon_1}.
\end{equation}
As a result, we obtain that $\beta(\log,5\epsilon,\mathcal{P}_m)\leq 1+|\gamma_1|+|\gamma_2|+4\epsilon_1$ and thus Theorem \ref{thm:generatePartition} and the arbitrariness of $\epsilon_1$ imply that $$e(\phi_t,\log)\leq 1+|\gamma_1|+|\gamma_2|.$$
\begin{proof}[Proof of Lemma \ref{lm:upperBound1}]
Suppose $y\in E_{\epsilon}$ and $D_R'(y)=\{t\in[0,R]:y^t\notin V_{\epsilon^2}\}$. Let $\epsilon_1\in(0,\epsilon_{bound}]$, then we apply Lemma \ref{lm:hittingTimeEstimates} with $\epsilon=\epsilon^2$, $\epsilon_1=\epsilon_1$, $a_1=1+|\gamma_1|+2\epsilon_1$, $a_2=1+|\gamma_2|+2\epsilon_1$ and $a_3=1$. By the second part of Lemma \ref{lm:hittingTimeEstimates}, we obtain that for any $y\in E_{\epsilon}\subset\bar{E}_{\epsilon}$, $x^p\in\mathbb{T}^{f_1}\times\mathbb{T}^{f_2}$ and $x^p\in\operatorname{Box}^{\operatorname{in}}(y,\epsilon^4,R)$, there exists $D_R(y)\subset[0,R]$ with $\bar{\lambda}(D_R(y))\geq(1-\frac{1}{10}\epsilon^2)R$ such that for any $t\in D_R(y)$, we have
$$N_1(x_1^p,t)=N_1(y_1,t),\text{ and }N_2(x_2^p,t)=N_2(y_2,t).$$

By the special flow representation, we have $x_i^{p+t}=(x_{i,h}^p+N_i(x_i^p,t)\alpha_i,t-f_i^{(N(x_i^p,t))}(x_{i,h}^p))$ and $y_i^{t}=(y_{i,h}+N_i(y_i,t)\alpha_i,t-f_i^{(N(y_i,t))}(y_{i,h}))$ for $i=1,2$. Then $N_i(x_i^p,t)=N_i(y_i,t)$ implies $d_H(x_i^{p+t},y_i^t)=d_H(x_i^p,y_i)=\epsilon^4R^{-(1+|\gamma_i|+2\epsilon_1)}<\epsilon^3$; $N_i(x_i^p,t)=N_i(y_i,t)$ and \eqref{eq:OutboxDer1} implies that $d_V(x_i^{p+t},y_i^t)\leq d_V(x_i^p,y_i)+\epsilon^4<\epsilon^3$. Recall that for $t\in D_R'(y)\cap D_R(y)$, $y^t\notin V_{\epsilon^2}$ and $\epsilon<\frac{1}{m^3}$, thus above implies $y^t$ and $x^{p+t}$ will stay in same atom of partition $\mathcal{P}_m$. Notice that $\bar{\lambda}(D_R'(y)\cap D_R(y))\geq(1-\frac{2}{3}\epsilon)R$ and $p\in[0,\epsilon^3R]$, this gives $x\in B_R(y,\epsilon,\mathcal{P}_m)$ and thus finishes the proof.
\end{proof}

\begin{proof}[Proof of Lemma \ref{lm:upperBound2}]
We prove this lemma by contradiction. If the statement of the lemma is not hold, then there exist $p_0,q_0\in[0,\epsilon^3R]$ and $x\in\mathbb{T}^{f_1}\times\mathbb{T}^{f_2}$ such that $$x\in\operatorname{Box}^{\operatorname{in}}(y,\epsilon^4,R)\bigcap \phi_{p_0-q_0}(\operatorname{Box}^{\operatorname{in}}(y,\epsilon^4,R)).$$ Without loss of generality, we assume that $p_0> q_0$\footnote{Otherwise we will consider $x\in(\mathscr{T}^{\alpha_1,\gamma_1}\times\mathscr{T}^{\alpha_2,\gamma_2})_{q_0- p_0}(\operatorname{Box}^{\operatorname{in}}(y,\epsilon^4,R))\bigcap \operatorname{Box}^{\operatorname{in}}(y,\epsilon^4,R).$}.

Notice that $x_{1,h}^{p_0-q_0}=R^{N_1(x_1,p_0-q_0)}_{\alpha_1}x_{1,h}$ and this implies:
\begin{equation}\label{eq:horizontalDistDiop}
d_H(x_1^{p_0-q_0},x_1)=\min_{m\in\mathbb{Z}}\{|N_1(x_1,p_0-q_0)\alpha_1-m|\}.
\end{equation}

Applying Lemma \ref{lm:hittingTimeEstimates} with $\epsilon=\epsilon^2$, $\epsilon_1\in(0,\epsilon_{bound}]$, $a_1=1+|\gamma_1|+2\epsilon_1$, $a_2=1+|\gamma_2|+2\epsilon_1$ and $a_3=1$. By the first part of the Lemma \ref{lm:hittingTimeEstimates}, we obtain that for any $y\in\bar{E}_{\epsilon}$, any $x\in\operatorname{Box}^{\operatorname{in}}(y,\epsilon^4,R)$ and any $L\in[0,R]$, we have
\begin{equation}\label{eq:NcloseControl}
\begin{aligned}
N_1(y_1,L)-2\leq N_1(x_1,L)\leq N_1(y_1,L)+2.
\end{aligned}
\end{equation}

Recall that $p_0-q_0\geq C_{\alpha_1,\gamma_1}$, definition of $C_{\alpha_1,\gamma_1}$, \eqref{eq:NcloseControl} and equation $(7.1)$ in \cite{KanigowskiWei1} (notice that here we only need to control $N_1(x_1, p_0-q_0)$ and thus (7.1) in \cite{KanigowskiWei1} will be enough), we obtain that
\begin{equation}\label{eq:SpecialDisCon}
\begin{aligned}
&N_1(x_1, p_0-q_0)\geq N_1(y_1, p_0-q_0)-2\geq \frac{p_0-q_0}{\log^5(p_0-q_0)}-2\\
&\geq\frac{C_{\alpha_1,\gamma_1}}{\log^5C_{\alpha_1,\gamma_1}}-2\geq C_{\alpha_1,\gamma_1}^{1-|\gamma_1|}\geq \max\{N_{\alpha_1},C_{\gamma_1},C_0,1\}.
\end{aligned}
\end{equation}

Then \eqref{eq:Ncontrol}, \eqref{eq:SpecialDisCon} and definition of $C_{\alpha_1,\gamma_1}$ imply that
\begin{equation}\label{eq:finalOfDisjointLemma}
\begin{aligned}
\min_{m\in\mathbb{Z}}\{|N_1(x_1,p_0-q_0)\alpha_1-m|\}&\geq\frac{1}{N_1(x_1,p_0-q_0)\log^{10}N_1(x_1, p_0-q_0)}\\
&\geq\frac{1}{N_1(x_1,p_0-q_0)^{1+(|\gamma_1|/2)}}\\
&\geq\frac{1}{[(\epsilon_{bound})^{-1}(p_0-q_0)]^{1+(|\gamma_1|/2)}}\geq\frac{1}{R^{1+(2|\gamma_1|/3)}},
\end{aligned}
\end{equation}
where $\epsilon_{bound}$ is defined in \eqref{def:ebound}.

But by the definition of $x$, the right side of \eqref{eq:horizontalDistDiop} need to be smaller than $\frac{2\epsilon^4}{R^{1+|\gamma_1|}}$, which contradicts to \eqref{eq:finalOfDisjointLemma} and thus we finish the proof.
\end{proof}

\noindent\textbf{Lower bound estimate:}
Fix $\delta>0$, $0<\epsilon<\frac{1}{100}$ and for every $\epsilon_1>0$, let $m\geq\max\{m_{\delta},4\epsilon_{bound}^{-8}\}$, $R\geq \max\{N_{0,\epsilon}, R_{\delta}\}$ and $D=D_{\delta}$, where $N_{0,\epsilon}$ is defined in Lemma \ref{lem:linearControl} and $R_{\delta}$, $D_{\delta}$ is defined in Theorem \ref{lm:mainLemma}. For every $x,y\in D$, if $h(x,y)$ is a $(\epsilon,\mathcal{P}_m,R)-$matching, by Theorem \ref{lm:mainLemma} there exists $t_0\in A(x,y)$ such that
$$
L_H(t_0)\leq R^{-\frac{1}{1-\epsilon_0}}.
$$
In particular, we have
\begin{equation}\label{eq:HorizontalControl}
d_H(x_1^{t_0},y_1^{h(t_0)})\leq R^{-\frac{1}{1-\epsilon_0}},\ \ d_H(x_2^{t_0},y_2^{h(t_0)})\leq R^{-\frac{1}{1-\epsilon_0}}.
\end{equation}

Combining \eqref{eq:HorizontalControl}, Definition \ref{def:outBox} and recall that $t_0\in A(x,y)$, we obtain
\begin{equation}\label{eq:lowerBoundClosePoints}
x^{t_0}\in \operatorname{Box}^{\operatorname{out}}(y^{h(t_0)},\frac{2}{m},R).
\end{equation}
This implies that
\begin{equation}\label{eq:outcontrol1}
x\in \phi_{-t_0}\left(\operatorname{Box}^{\operatorname{out}}(y^{h(t_0)},\frac{2}{m},R)\right).
\end{equation}



Let $\eta=\eta(\gamma_1,\epsilon_0,\epsilon_1)=\frac{1}{(1-\epsilon_0)(1+|\gamma_1|+2\epsilon_1)}$, $R_i=iR^{\eta}$. It is worth to point out that by the choice of $\epsilon_0$ and $\epsilon_1$, we have $\eta<1$. We will show that any $y\in D$ which are $(\frac{1}{100},\mathcal{P}_m,R)-$ matchable satisfy
\begin{equation}\label{eq:outcontrol3}
B_R(y,\epsilon,\mathcal{P}_m)\cap D\subset\bigcup_{-4R\leq p\leq 4R}\bigcup_{i=1}^{R^{1-\eta}}\phi_{-p}\left(\operatorname{Box}_{\sqrt{\frac{2}{m}}}^{\operatorname{out}}(y^{R_i},\sqrt{\frac{2}{m}},R)\right).
\end{equation}

By Definition \ref{def:outBox} and \eqref{eq:outcontrol3}, we obtain
\begin{equation}\label{eq:upperboundBRball}
\begin{aligned}
\tilde{\mu}\left(B_R(y,\epsilon,\mathcal{P}_m)\cap D\right)&\leq8R^{2-\eta}\max_i\{\tilde{\mu}\left(\operatorname{Box}^{\operatorname{out}}_{\sqrt{\frac{2}{m}}}(y^{R_i},\sqrt{\frac{2}{m}},R)\right)\}\\
&\leq8\cdot\frac{100}{m^2}R^{2-\frac{1}{(1-\epsilon_0)(1+|\gamma_1|+2\epsilon_1)}-\frac{2}{1-\epsilon_0}},
\end{aligned}
\end{equation}
where last inequality obtained by applying Lemma \ref{lm:MovingOurBallMeasure} with $\epsilon=\sqrt{\frac{2}{m}}$.


Then define $W(\epsilon_0,\epsilon_1)=2-\frac{1}{(1-\epsilon_0)(1+|\gamma_1|+2\epsilon_1)}-\frac{2}{1-\epsilon_0}$. Thus by Lemma \ref{compef} and \eqref{eq:upperboundBRball} the number of balls needed to cover $1-\epsilon$ portion of space is at least $\frac{m^2}{800}R^{-W(\epsilon_0,\epsilon_1)}$ and therefore we obtain $\beta(\log,\epsilon,\mathcal{P}_m)\geq-W(\epsilon_0,\epsilon_1)$. Recall that the sequence of partitions $\mathcal{P}_m$ is generating thus we have $$e(\phi_t,\log)\geq-W(\epsilon_0,\epsilon_1).$$

Recall that $\epsilon_0=\frac{|\gamma_2|}{2(1+|\gamma_2|)}-4\epsilon_2\frac{2+|\gamma_2|}{(|\gamma_1|-|\gamma_2|)(1+|\gamma_2|)}$ for $\epsilon_2\in(0,\frac{|\gamma_2|(|\gamma_1|-|\gamma_2|)}{16(2+|\gamma_2|)})$, $-W(\epsilon_0,\epsilon_1)$ is increasing in $\epsilon_0$ and decreasing in $\epsilon_1$, thus we have $$e(\phi_t,\log)\geq\lim_{\epsilon_1\to0,\epsilon_2\to0}-W(\epsilon_0,\epsilon_1)=\frac{2+4|\gamma_2|+2|\gamma_1\gamma_2|}{(2+|\gamma_2|)(1+|\gamma_1|)}.$$
So it remains to show \eqref{eq:outcontrol3}.

Recall \eqref{eq:lowerBoundClosePoints}, we obtain that there exists a $t_0\in[0,R]$ such that
$$x^{t_0}\in \operatorname{Box}^{\operatorname{out}}(y^{h(t_0)},\frac{2}{m},R).
$$

Let $R_{i_0}$ minimize $R_i-h(t_0)$ over all $i$ with $R_i-h(t_0)\geq0$. Then we apply Lemma \ref{lem:linearControl} with $\epsilon=\sqrt{\frac{2}{m}}$, $x=x^{t_0}$ and $y=y^{h(t_0)}$ (recall that $\bar{E}_{\epsilon}=(S^1(n')\cap W^1(\delta))\times(S^2(n')\cap W^2(\delta))$ and $x^{t_0}\in(S^1(n')\cap W^1(\delta))\times(S^2(n')\cap W^2(\delta))$ due to Theorem \ref{lm:mainLemma}), we obtain there exists $L'$ with $|L'-(R_{i_0}-h(t_0))|<\frac{12}{m}$ such that:
$$x^{L'+t_0}\in \operatorname{Box}^{\operatorname{out}}_{\sqrt{\frac{2}{m}}}(y^{R_{i_0}},\sqrt{\frac{2}{m}},R).
$$
Notice that $|t_0+L'|\leq|t_0|+|R_{i_0}-h(t_0)|+\frac{12}{m}\leq 3R$ and this gives \eqref{eq:outcontrol3} and thus finishes the proof.


\end{proof}
\section{Proof of Theorem  \ref{lm:mainLemma}}\label{sec:proofOfMainLemma}
\begin{theorem}
\label{lm:mainLemma}
For every $\delta>0$ there exists a set $D=D_{\delta}\subset\mathbb{T}^{f_1}\times\mathbb{T}^{f_2}$ with $\tilde{\mu}(D)>1-\delta$ and $m_{\delta},R_{\delta}\in\mathbb{R}$ such that for every $x=(x_1,x_2),y=(y_1,y_2)\in D$, $m\geq m_{\delta}$, $R\geq R_{\delta}$ and $x,y$ are $(\frac{1}{100},\mathcal{P}_m,R)-$matchable, there exists $t_0\in A(x,y)$\footnote{$A(x,y)$ is defined in Definition \ref{def:ematchable}.} such that $x^{t_0}\in (S^1(n')\cap W^1(\delta))\times(S^2(n')\cap W^2(\delta))$ and
\begin{equation}
L_H(t_0)\leq R^{-\frac{1}{1-\epsilon_0}},
\end{equation}
where $\epsilon_0=\frac{|\gamma_2|}{2(1+|\gamma_2|)}-4\epsilon_2\frac{2+|\gamma_2|}{(|\gamma_1|-|\gamma_2|)(1+|\gamma_2|)}$ and $\epsilon_2\in(0,\frac{|\gamma_2|(|\gamma_1|-|\gamma_2|)}{16(2+|\gamma_2|)})$ is a fixed number.
\end{theorem}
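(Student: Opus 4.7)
The plan is to prove Theorem~\ref{lm:mainLemma} by a contradiction argument exploiting the incompatible shearing rates of the two factors $\mathscr{T}^{\alpha_1,\gamma_1}$ and $\mathscr{T}^{\alpha_2,\gamma_2}$. First I would define the good set $D$ as an intersection of the products $(S^1(n')\cap W^1(\delta))\times(S^2(n')\cap W^2(\delta))$ (to use the Denjoy--Koksma based derivative estimates from Lemma~\ref{lm:derivativeControl}) with a standard ergodic-theorem set guaranteeing that for $R$ large, the orbit $\{x^t:t\in[0,R]\}$ spends most of its time inside the good product set; this ensures $\tilde\mu(D)>1-\delta$ for $m_\delta,R_\delta$ chosen appropriately. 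Assume toward contradiction that for every $t_0\in A(x,y)$ with $x^{t_0}\in(S^1(n')\cap W^1(\delta))\times(S^2(n')\cap W^2(\delta))$ we have $L_H(t_0)>R^{-1/(1-\epsilon_0)}$.

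Next I would subdivide the matched orbit $[0,R]\cap A(x,y)$ into blocks on which both horizontal distances $d_H(x_i^t,y_i^{h(t)})$ stay comparable to constants $D_i$ and on which the orbits avoid the singularity (this is where the sets $S^i(n')$ are used). On each such block, the $(\frac{1}{100},\mathcal{P}_m,R)$-matching together with the fact that atoms of $\mathcal{P}_m$ have diameter at most $2/m$ pins down the vertical shear: the vertical coordinate of $x_i^t$ evolves from that of $y_i^{h(t)}$ by $D_i\cdot f_i'^{(N_i)}(\cdot)$ up to lower order terms. Invoking Lemma~\ref{lm:derivativeControl} this shear is pinched between $D_i\, t^{1+|\gamma_i|-\epsilon_1}$ and $D_i\, t^{1+|\gamma_i|+\epsilon_1}$, and because $|h'-1|<\frac{1}{100}$ the rate of change of these shears in the two factors must also be approximately matched; in other words, $h'(t)-1$ is simultaneously constrained by both components, which have incompatible polynomial scales because $|\gamma_1|>|\gamma_2|$.

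The crux of the combinatorial argument is to turn the two incompatible constraints into a quantitative contradiction with the assumed lower bound on $L_H$, and this is precisely where I expect to need the intermediate statement Lemma~\ref{lm:thirdMain} mentioned in the outline. The idea is to introduce ``matching balls'' $B(w,L)$ from Definition~\ref{def:MatchingBall}, track the evolution of $L_H$ along such balls, and show that whenever $L_H(w)>R^{-1/(1-\epsilon_0)}$ persists, the relative shear in the $\gamma_1$-factor accumulates too fast to be compensated by an admissible $h$, while the $\gamma_2$-factor forces a much slower accumulation; the specific exponent $\epsilon_0=\frac{|\gamma_2|}{2(1+|\gamma_2|)}-4\epsilon_2\frac{2+|\gamma_2|}{(|\gamma_1|-|\gamma_2|)(1+|\gamma_2|)}$ appears as the value at which the two constraints become marginally compatible, which is why the strict inequality $\epsilon_2<\frac{|\gamma_2|(|\gamma_1|-|\gamma_2|)}{16(2+|\gamma_2|)}$ is required.

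The main obstacle, I expect, is the careful combinatorial bookkeeping in step two and three: one must simultaneously control how often the horizontal distance drops by a factor of $2$ in each factor, how partition atom changes interact with these drops, and how the overall derivative constraint $|h'-1|<\frac{1}{100}$ propagates. This is exactly the sort of bookkeeping that necessitates the multi-section structure of the paper: the combinatorial engine is packaged as Lemma~\ref{lm:thirdMain} (proved in Section~\ref{sec:proofOfThirdLemma} using matching-ball observations), Theorem~\ref{lm:mainLemma} is then deduced in Section~\ref{sec:proofOfSecondLemma} by feeding the shearing estimates above into that engine, and the delicate point is to ensure that the various error exponents $\epsilon_1,\epsilon_2$ can be chosen small enough that the derived contradiction is strict.
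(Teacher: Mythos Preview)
Your proposal captures the physical mechanism correctly (incompatible polynomial shearing rates in the two factors, controlled via Lemma~\ref{lm:derivativeControl} on the good sets $S^i(n')\cap W^i(\delta)$), and you are right that Lemma~\ref{lm:thirdMain} is the underlying engine. However, the actual proof in the paper is \emph{not} the direct contradiction argument you sketch, and the structure you omit is exactly the step that makes the argument work.

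The paper does not assume $L_H(t_0)>R^{-1/(1-\epsilon_0)}$ for all good $t_0$ and derive a contradiction. Instead it proceeds by a \emph{dyadic scale decomposition and pigeonhole}: define
\[
A_j^{R,m}(x,y)=\{t\in A(x,y):L(t)<\tfrac{2}{m},\ 2^{-j-1}<L_H(t)\le 2^{-j}\},
\]
and prove the intermediate Lemma~\ref{lm:secondMain}, which asserts that on a large set $W_R(x,y)\subset A(x,y)$ (of measure $\ge \tfrac{9}{10}R$) one has the summable bound $\bar\lambda(A_j^{R,m}\cap W_R)\le R/j^{1.4}$ for every $j$ with $2^j\le R^{1/(1-\epsilon_0)}$. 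Since $\sum_j j^{-1.4}<\infty$, the total mass of all ``coarse'' scales $j\le j_R$ is negligible compared to $\bar\lambda(W_R)$, forcing some $j_1>j_R$ with $A_{j_1}^{R,m}\cap W_R\neq\emptyset$; any $t_0$ there satisfies $L_H(t_0)\le 2^{-j_1}<R^{-1/(1-\epsilon_0)}$. This is the entire content of Section~\ref{sec:proofOfMainLemma}.

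What you are missing is precisely this intermediate layer. Your outline jumps from ``blocks where horizontal distances stay comparable'' straight to a hoped-for contradiction, but the shearing incompatibility in Lemma~\ref{lm:thirdMain} is purely \emph{local}: it says that around a single time $w$ with $L_H(w)=R_w^{-1}$, the measure of good matching times in a window of length $R_w^{1-\epsilon_0}$ (or $R_w^{1/(1+|\gamma_2|)+\epsilon_0}$) is at most that length divided by $\log^2 R_w$. To pass from this local statement to a global one on $[0,R]$ requires the covering argument of Section~\ref{sec:proofOfSecondLemma}: for each fixed dyadic scale $j$, tile $[0,R]$ by intervals of length $\sim (2^j)^{1-\epsilon_0}/\sqrt{\log 2^j}$ or $\sim (2^j)^{1/(1+|\gamma_2|)+\epsilon_0}/\sqrt{\log 2^j}$ (depending on which alternative in Lemma~\ref{lm:thirdMain}(2) holds at the left endpoint), apply the local bound on each, and sum. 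This is how the $R/j^{1.4}$ estimate arises, and without it there is no mechanism to conclude that the set of times with $L_H(t)>R^{-1/(1-\epsilon_0)}$ has measure strictly less than $\bar\lambda(W_R)$. Your sketch should therefore be reorganized around Lemma~\ref{lm:secondMain} as the key intermediate statement, with Lemma~\ref{lm:thirdMain} feeding into it rather than directly into Theorem~\ref{lm:mainLemma}.
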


Some parts of the proof of Theorem \ref{lm:mainLemma} is similar to the Proposition $3.1$ in \cite{KanigowskiWei1}, we provide a complete proof here as we get \emph{much sharper} estimates by more precise inequalities (see Section \ref{sec:SharpEstimate} for more details) and we also directly conduct our proof in the case of flows
instead of time-one maps of flows as in \cite{KanigowskiWei1}.

For any $m>0$, $R\geq1$ and $j\in\mathbb{N}$, suppose $x,y\in\mathbb{T}^{f_1}\times\mathbb{T}^{f_2}$ are $(\frac{1}{100},\mathcal{P}_m,R)-$matchable, define a set $A_j^{R,m}(x,y)$ as:
$$A_j^{R,m}(x,y)=\{t\in A(x,y):L(t)<\frac{2}{m}\text{ and }2^{-j-1}<L_H(t)\leq2^{-j}\}.$$

The meaning of the set $A_j^{R,m}(x,y)$ can be explained as following: we decompose all the matching pairs into different sets based their horizontal distance. The following two graphs provide an intuitive explanation about what is $A_j^{R,m}(x,y)$, where different colors represent different $A_j^{R,m}(x,y)$:
\begin{figure}[H]
 \centering
 \scalebox{0.6}
 {
 \begin{tikzpicture}[scale=5]
	 \tikzstyle{vertex}=[circle,minimum size=20pt,inner sep=0pt]
	 \tikzstyle{selected vertex} = [vertex, fill=red!24]
	 \tikzstyle{edge1} = [draw,line width=5pt,-,red!50]
     \tikzstyle{edge2} = [draw,line width=5pt,-,green!50]
     \tikzstyle{edge3} = [draw,line width=5pt,-,blue!50]
     \tikzstyle{edge4} = [draw,line width=5pt,-,brown!50]

	 \tikzstyle{edge} = [draw,thick,-,black]
	 \node[vertex] (v00) at (0,0) {$x$};
	 \node[vertex] (v01) at (0.6,0) {$\phi_{t_1}x$};
	 \node[vertex] (v02) at (1.0,0) {$\phi_{t_2}x$};
	 \node[vertex] (v03) at (1.2,0) {$\phi_{t_3}x$};
	 \node[vertex] (v04) at (1.5,0) {$\phi_{t_4}x$};
	 \node[vertex] (v05) at (2.1,0) {$\phi_{t_5}x$};
	 \node[vertex] (v06) at (2.7,0) {$\phi_{t_6}x$};
	 \node[vertex] (v07) at (3.3,0) {$\phi_{t_7}x$};
     \node[vertex] (v10) at (0,0.5) {$\phi_{h(t_1)}y$};
	 \node[vertex] (v11) at (0.4,0.5) {$\phi_{h(t_2)}y$};
	 \node[vertex] (v12) at (1.2,0.5) {$\phi_{h(t_3)}y$};
	 \node[vertex] (v13) at (1.8,0.5) {$\phi_{h(t_4)}y$};
	 \node[vertex] (v14) at (2.1,0.5) {$\phi_{h(t_5)}y$};
	 \node[vertex] (v15) at (2.4,0.5) {$\phi_{h(t_6)}y$};
	 \node[vertex] (v16) at (3.3,0.5) {$\phi_{h(t_7)}y$};
	
	 \draw[edge] (v00)--(v01)--(v02)--(v03)--(v04)--(v05)--(v06)--(v07);
	 \draw[edge] (v10)--(v11)--(v12)--(v13)--(v14)--(v15)--(v16);
     \draw[edge4] (v10)--(v01);
     \draw[edge4] (v11)--(v02);
     \draw[edge4] (v12)--(v03);
     \draw[edge4] (v13)--(v04);
     \draw[edge4] (v14)--(v05);
     \draw[edge4] (v15)--(v06);
     \draw[edge4] (v16)--(v07);

 \end{tikzpicture}
 }
 \caption{Original Matching}
 \end{figure}
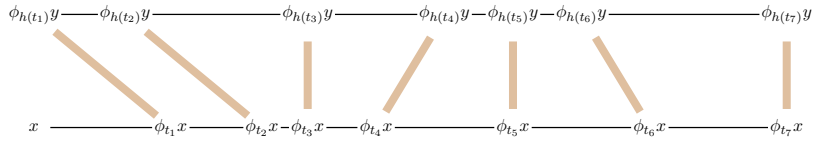
 \begin{figure}[H]
 \centering
 \scalebox{0.6}
 {
 \begin{tikzpicture}[scale=5]
	 \tikzstyle{vertex}=[circle,minimum size=20pt,inner sep=0pt]
	 \tikzstyle{selected vertex} = [vertex, fill=red!24]
	 \tikzstyle{edge1} = [draw,line width=5pt,-,red!50]
     \tikzstyle{edge2} = [draw,line width=5pt,-,green!50]
     \tikzstyle{edge3} = [draw,line width=5pt,-,blue!50]
     \tikzstyle{edge4} = [draw,line width=5pt,-,brown!50]

	 \tikzstyle{edge} = [draw,thick,-,black]
	 \node[vertex] (v00) at (0,0) {$x$};
	 \node[vertex] (v01) at (0.6,0) {$\phi_{t_1}x$};
	 \node[vertex] (v02) at (1.0,0) {$\phi_{t_2}x$};
	 \node[vertex] (v03) at (1.2,0) {$\phi_{t_3}x$};
	 \node[vertex] (v04) at (1.5,0) {$\phi_{t_4}x$};
	 \node[vertex] (v05) at (2.1,0) {$\phi_{t_5}x$};
	 \node[vertex] (v06) at (2.7,0) {$\phi_{t_6}x$};
	 \node[vertex] (v07) at (3.3,0) {$\phi_{t_7}x$};
     \node[vertex] (v10) at (0,0.5) {$\phi_{h(t_1)}y$};
	 \node[vertex] (v11) at (0.4,0.5) {$\phi_{h(t_2)}y$};
	 \node[vertex] (v12) at (1.2,0.5) {$\phi_{h(t_3)}y$};
	 \node[vertex] (v13) at (1.8,0.5) {$\phi_{h(t_4)}y$};
	 \node[vertex] (v14) at (2.1,0.5) {$\phi_{h(t_5)}y$};
	 \node[vertex] (v15) at (2.4,0.5) {$\phi_{h(t_6)}y$};
	 \node[vertex] (v16) at (3.3,0.5) {$\phi_{h(t_7)}y$};
	
	 \draw[edge] (v00)--(v01)--(v02)--(v03)--(v04)--(v05)--(v06)--(v07);
	 \draw[edge] (v10)--(v11)--(v12)--(v13)--(v14)--(v15)--(v16);
     \draw[edge1] (v10)--(v01);
     \draw[edge1] (v11)--(v02);
     \draw[edge2] (v12)--(v03);
     \draw[edge3] (v13)--(v04);
     \draw[edge2] (v14)--(v05);
     \draw[edge3] (v15)--(v06);
     \draw[edge2] (v16)--(v07);

 \end{tikzpicture}
 }
 \caption{Partition of Matching Arrows based on set $A_j^{R,m}$}
 \end{figure}
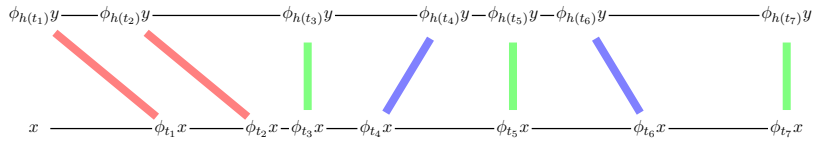

We will use the following Lemma to prove Theorem  \ref{lm:mainLemma}, where, we recall, $K_m^i$ is defined in the end of Section \ref{sec:Notations} as the suspension space $\mathbb{T}^{f_i}$ cut at height $2^n$ for $i=1,2$:
\begin{lemma}\label{lm:secondMain}
For every $\delta>0$ there exists $R_{\delta},m_{\delta}>0$ and a set $D=D_{\delta}\subset\mathbb{T}^{f_1}\times\mathbb{T}^{f_2}$ with $(\mu^{f_1}\times\mu^{f_2})(D)>1-\delta$ such that for all $m\geq m_{\delta}$, $R\geq R_{\delta}$ and $x,y\in D$ that are $(\frac{1}{100},\mathcal{P}_m,R)$-matchable, there exists $W_R(x,y)\subset A(x,y)$ such that
\begin{itemize}
  \item[\textbf{(a)}] $\bar{\lambda}(W_R(x,y))\geq\frac{9}{10}R$ and for any $t\in W_R(x,y)$, $x^t\in(S^1(n')\cap W^1(\delta))\times(S^2(n')\cap W^2(\delta))$,
  \item[\textbf{(b)}] for every $p\in W_R(x,y)$, we have $x^p,y^{h(p)}\in K_m^1\times K_m^2$,
  \item[\textbf{(c)}] for every $j\in\mathbb{N}$ satisfying $2^j\leq R^{\frac{1}{1-\epsilon_0}}$ we have
$\bar{\lambda}(A_j^{R,m}(x,y)\cap W_R(x,y))\leq\frac{R}{j^{1.4}}.$
\end{itemize}
\end{lemma}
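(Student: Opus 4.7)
The plan is to build $D=D_\delta$ through Birkhoff averaging on a handful of carefully chosen ``good sets'', take $W_R(x,y)$ to be the restriction of $A(x,y)$ to those good sets (along both the $x$-orbit and its $h$-image on the $y$-orbit), and then extract the bound (c) from the shearing estimates of Lemma \ref{lm:derivativeControl} combined with a Diophantine counting argument. Parts (a) and (b) should follow cleanly from the ergodic theorem; (c) is the technical heart.

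For the construction, let $n'$ be as in Lemma \ref{lm:derivativeControl} applied with $\epsilon_1=\epsilon_2$, and set $G_\delta := (S^1(n')\cap W^1(\delta))\times (S^2(n')\cap W^2(\delta))$, which has $\tilde\mu(G_\delta)>1-3\delta^3$ by Lemma \ref{lm:Snmeaure} and Lemma \ref{lm:derivativeControl}. Choose $m_\delta$ so that $\tilde\mu((K_m^1\times K_m^2)^c)<\delta^5$ for every $m\geq m_\delta$. Applying Birkhoff's ergodic theorem to $\mathbf{1}_{G_\delta}$ and $\mathbf{1}_{(K_m^1\times K_m^2)^c}$ under $\phi_t$, fix $R_\delta$ and define $D_\delta$ as the set of points on which both orbital averages over $[0,R]$ are within $\delta^4$ of their integrals for every $R\geq R_\delta$; a standard maximal inequality yields $\tilde\mu(D_\delta)>1-\delta$. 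Put
\[
W_R(x,y) := A(x,y)\cap\{t:\, x^t\in G_\delta\}\cap\{t:\, x^t\in K_m^1\times K_m^2\}\cap\{t:\, y^{h(t)}\in K_m^1\times K_m^2\}.
\]
Then (b) is immediate, and for (a) I estimate the complement of $W_R$ in $[0,R]$: $A(x,y)^c$ contributes at most $R/100$, the bad times for $x^t$ contribute $O(\delta^3)R$ by the ergodic estimate, and the bad times for $y^{h(t)}$ contribute $O(\delta^5)R$ after the change of variables $s=h(t)$ (legitimate since $|h'-1|<1/100$). Summing gives $\bar\lambda(W_R(x,y))\geq 9R/10$ once $\delta$ is small.

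For (c), fix $j$ with $2^j\leq R^{1/(1-\epsilon_0)}$ and decompose $A_j^{R,m}(x,y)\cap W_R(x,y)$ into its maximal connected subintervals $(I_k)$. Since on $W_R$ matched points lie in a common atom of $\mathcal{P}_m$ inside $K_m^i$, one has $L(t)<2/m$ automatically, so $A_j^{R,m}\cap W_R$ is empty for $j\leq \log_2 m-1$ and we may assume $j$ large. On each $I_k$ the base-iteration differences $(N_i(x_i,\cdot)-N_i(y_i,h(\cdot)))_{i=1,2}$ are constant, hence $L_H$ is constant at some value in $(2^{-j-1},2^{-j}]$. A length bound for $I_k$ comes from shearing: for the factor $i$ realising $L_H$, Lemma \ref{lm:derivativeControl} yields $|f_i'^{(N)}(\zeta)|\geq\tau^{1+|\gamma_i|-\epsilon_1}$ with $\tau=t-\inf I_k$, so the ergodic-sum discrepancy grows like $\tau^{1+|\gamma_i|-\epsilon_1}\cdot 2^{-j}$; pairing this with the matching constraint $|h(t)-h(\inf I_k)-\tau|\leq\tau/100$ and $L(t)<2/m$ forces $|I_k|\lesssim 2^{j/(1+|\gamma_2|-\epsilon_1)}$ uniformly in $k$ (the weaker-shearing factor supplies the universal bound). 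The number of intervals $I_k$ is then controlled by counting admissible pairs $(k_1,k_2)\in\mathbb{Z}^2$ satisfying $\|x_{i,h}-y_{i,h}+k_i\alpha_i\|\in(2^{-j-1},2^{-j}]$, using equidistribution of $k\alpha_i\bmod 1$ combined with the Diophantine estimate \eqref{eq:Ncontrol}. Multiplying the two yields the claimed polynomial bound $R/j^{1.4}$.

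The main obstacle is the counting step in (c). A naive equidistribution count gives a bound exponential in $j$, which more than suffices for large $j$, but collapses when $j$ is only moderately large: the horizontal band $(2^{-j-1},2^{-j}]$ is then not small enough to benefit from equidistribution, while the trivial bound $\bar\lambda(W_R)\leq R$ is too weak. Securing the uniform polynomial bound $R/j^{1.4}$ over the full range $\log_2 m\lesssim j\lesssim \log_2 R/(1-\epsilon_0)$ requires exploiting the logarithmic Diophantine condition defining $\mathscr{D}$ through \eqref{eq:Ncontrol} and carefully tracking how many distinct ``cells'' (maximal subintervals of constant $(k_1,k_2)$) can simultaneously sit in the narrow horizontal band and satisfy $L<2/m$. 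I expect this combinatorial count to be the content of Lemma \ref{lm:thirdMain} flagged in the outline; once it is in hand, (c) follows by multiplying by the per-interval shearing bound above.
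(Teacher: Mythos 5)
Your construction of $D_\delta$ and $W_R(x,y)$ via Birkhoff averages, and your verification of parts (a) and (b), are in line with the paper. The problem is in part (c), and it is a real gap rather than a cosmetic one.

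You decompose $A_j^{R,m}(x,y)\cap W_R(x,y)$ into ``maximal connected subintervals $(I_k)$'' and assert that on each the base-iteration differences, and hence $L_H$, are constant. Neither half of this is justified: $A(x,y)$ is only measurable, so $A_j^{R,m}\cap W_R$ may have empty interior and its connected components may be singletons; and a maximal subinterval of this set has no reason to keep $N_i(x_i,\cdot)-N_i(y_i,h(\cdot))$ constant unless you instead decompose $[0,R]$ by these counters, but then the pieces are no longer inside $A_j^{R,m}\cap W_R$. Your plan then needs two ingredients -- a uniform length bound per component and a count of components -- and you explicitly leave the count open, anticipating that Lemma \ref{lm:thirdMain} is ``the combinatorial count.'' It is not. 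Lemma \ref{lm:thirdMain} is a local statement: (1) an \emph{isolation} property of matching times inside a ball $B(w,R_w/\log^5 R_w)$, and (2) a \emph{measure-density} bound, \eqref{eq:ControlInequal1} or \eqref{eq:ControlInequal2}, for the good-matching times within such a ball. It does not count Diophantine cells, nor does anything in the paper.

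The paper avoids the counting problem altogether. It tiles $[0,R]$ greedily with disjoint intervals $I_1,\dots,I_k$ of deterministic length $l_j^1=(2^j)^{\frac{1}{1+|\gamma_2|}+\epsilon_0}/\sqrt{\log 2^j}$ or $l_j^2=(2^j)^{1-\epsilon_0}/\sqrt{\log 2^j}$, the choice per tile depending on which alternative of Lemma \ref{lm:thirdMain}(2) holds at that tile's left endpoint; the number of tiles of each type is trivially at most $R/l_j^s+2$. Within each tile, part (1) ensures that the only surviving times of $A_j^{R,m}$ are those where \eqref{eq:HorizontalIsom} holds (the alternative $L_H(s)\geq 100L_H(w)$ would expel $s$ from $A_j^{R,m}$), and part (2) bounds their measure by roughly $l_j^s/\sqrt{\log 2^j}$. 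Multiplying tile count by per-tile measure gives $\bar\lambda(A_j^{R,m}\cap W_R)\lesssim R/\log^{3/2}2^j\leq R/j^{1.4}$, uniformly over the whole range $\log(m/8)<j\leq\frac{1}{1-\epsilon_0}\log R$. So the resolution of the obstacle you flagged is not a finer Diophantine count but a change of decomposition: fixed-length tiling, with the density estimate doing the work inside each tile. Your per-interval shearing bound is correctly in the spirit of Lemma \ref{lm:thirdMain}(2), but without the tiling and the isolation property (1) it cannot be converted into (c), and as written your argument does not close.
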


The proof of Lemma \ref{lm:secondMain} is contained in the next section. Let's first give a conditional proof of Theorem \ref{lm:mainLemma} based on Lemma \ref{lm:secondMain}.

\begin{proof}[Proof of Theorem \ref{lm:mainLemma}]
Fix $\delta>0$, $R\geq R_{\delta}$, $m\geq m_{\delta}$ and $x,y\in D$ are $(\frac{1}{100},\mathcal{P}_m,R)$-matchable with the set $A(x,y)$ and the matching function $h$. By Lemma \ref{lm:secondMain} \textbf{(b)}, the definition of $K_m^1$ and $K_m^2$ and the definition of $A_j^{R,m}$, we obtain
$$W_R(x,y)\subset\bigcup_{j\in\mathbb{N}}A_j^{R,m}.$$
Also notice that by the definition of $A_j^{R,m}$, for $j\leq\log \frac{m}{8}$ we have
\begin{equation}\label{eq:AemptySet}
A_j^{R,m}(x,y)=\emptyset.
\end{equation}
Thus by \textbf{(a)} and \textbf{(b)} of Lemma \ref{lm:secondMain}, we have
\begin{equation}\label{eq:SumSmall}
\begin{aligned}
\frac{1}{100}&>f_R(x,y,\mathcal{P}_m)\\
&\geq 1-\frac{\bar{\lambda}(W_R(x,y)^c)}{R}-\frac{1}{R}\sum_{j\geq0}\bar{\lambda}(A_j^{R,m}(x,y)\cap W_R(x,y))\\
&\geq\frac{9}{10}-\frac{1}{R}\sum_{j\geq0}\bar{\lambda}(A_j^{R,m}(x,y)\cap W_R(x,y)),
\end{aligned}
\end{equation}
where the second inequality follows by considering the possibility such that $f_R(x,y,\mathcal{P}_m)$ as small as we can, i.e. all the elements outside $W_R(x,y)$ are good matching time moments and thus do not contribute to $f_R(x,y,\mathcal{P}_m)$.

Let $j_R$ be such that $2^{j_R}\leq R^{\frac{1}{1-\epsilon_0}}<2^{j_R+1}$. Then by \eqref{eq:AemptySet}, Lemma \ref{lm:secondMain} \textbf{(c)} we obtain
\begin{equation}\label{eq:SumSmall1}
\frac{1}{R}\sum_{0\leq j\leq j_R}\bar{\lambda}(A_j^{R,m}(x,y)\cap W_R(x,y))=\frac{1}{R}\sum_{\log\frac{m}{8}\leq j\leq j_R}\bar{\lambda}(A_j^{R,m}(x,y)\cap W_R(x,y))\leq\frac{1}{1000}
\end{equation}
by picking $m$ large enough (as $\sum_{j}\frac{1}{j^{1.4}}<\infty$). Thus combining \eqref{eq:SumSmall} and \eqref{eq:SumSmall1}, there exists $j_1> j_R$ such that $A_{j_1}^{R,m}(x,y)\cap W_R(x,y)\neq\emptyset$. Finally recalling the definition of $A_{j_1}^{R,m}(x,y)$ and $W_R(x,y)$, we obtain that there exists $t_0\in A(x,y)$ such that $x^{t_0}\in(S^1(n')\cap W^1(\delta))\times(S^2(n')\cap W^2(\delta))$ and $$L_H(t_0)\leq 2^{-j_1}\leq2^{-(j_R+1)}<R^{-\frac{1}{1-\epsilon_0}},$$
which finishes the proof of Theorem \ref{lm:mainLemma}.
\end{proof}

\section{Proof of Lemma \ref{lm:secondMain}}\label{sec:proofOfSecondLemma}
The proof of Lemma \ref{lm:secondMain} will follow from the following lemma, where $B(\cdot,\cdot)$ is defined in Definition \ref{def:MatchingBall}:
\begin{lemma}\label{lm:thirdMain}
For every $\delta>0$, there exists $R_{\delta},m_{\delta}>0$ and $D=D_{\delta}\subset\mathbb{T}^{f_1}\times\mathbb{T}^{f_2}$ with $(\mu^{f_1}\times\mu^{f_2})(D)>1-\delta$ such that for any $R\geq R_{\delta}$, $m\geq m_{\delta}$ and every $x,y\in D$ that are $(\frac{1}{100},\mathcal{P}_m,R)-$matchable, there exists $W_R(x,y)\subset A(x,y)$ such that \textbf{(a)} and \textbf{(b)} of Lemma \ref{lm:secondMain} hold and for every $j\in\mathbb{N}$ satisfying $2^j\leq R^{\frac{1}{1-\epsilon_0}}$ we have
\begin{itemize}
  \item[(1)] for $w\in W_R(x,y)$ and denote $R_w^{-1}\doteq L_H(w)$, if $L_H(w)<\frac{2}{m}$, then for every $s\in B(w,\frac{R_w}{\log^5R_w})$, either
\begin{equation}\label{eq:HorizontalIsom}
d_H(x^s_i,y^{h(s)}_i)=d_H(x^w_i,y^{h(w)}_i)\text{ for }i=1,2
\end{equation}
or
\begin{equation}\label{eq:badsituation}
L_H(s)\geq 100L_H(w);
\end{equation}
  \item[(2)] for every $w\in W_R(x,y)$ and denote $R_w^{-1}\doteq L_H(w)$, if $L_H(w)<\frac{2}{m}$, we have at least one of the following inequalities:
\begin{equation}\label{eq:ControlInequal1}
\begin{aligned}
\bar{\lambda}(\{t\in B(w,R_w^{\frac{1}{1+|\gamma_2|}+\epsilon_0}):\eqref{eq:HorizontalIsom}\text{ holds and }L(t)<\frac{1}{4}\})<\frac{R_w^{\frac{1}{1+|\gamma_2|}+\epsilon_0}}{\log^2 R_w}
\end{aligned}
\end{equation}
or
\begin{equation}\label{eq:ControlInequal2}
\begin{aligned}
\bar{\lambda}(\{t\in B(w,R_w^{1-\epsilon_0}):\eqref{eq:HorizontalIsom}\text{ holds and }L(t)<\frac{1}{4}\})<\frac{R_w^{1-\epsilon_0}}{\log^2 R_w}.
\end{aligned}
\end{equation}
\end{itemize}
\end{lemma}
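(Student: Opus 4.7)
The plan is to build on the Denjoy--Koksma style estimates of Lemma \ref{lm:derivativeControl}, the Diophantine condition \eqref{eq:Ncontrol} on $\alpha_1,\alpha_2 \in \mathscr{D}$, and the rigidity of the special-flow representation to analyze the local cost of a matching around a typical time $w$.

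\textbf{Step 1 (Setup of $D$ and $W_R$).} I would define $D \subset (S^1(n') \cap W^1(\delta)) \times (S^2(n') \cap W^2(\delta))$, intersected further with a Birkhoff-averages set for the indicators of small neighborhoods of the cusp $\{0\}^{f_i}$ and of the partition boundary of $\mathcal{P}_m$, so that $\tilde{\mu}(D) > 1 - \delta$ by Lemma \ref{lm:Snmeaure} and the ergodic theorem. Take $W_R(x,y)$ to be the set of $t \in A(x,y)$ with $x^t, y^{h(t)} \in D \cap (K_m^1 \times K_m^2)$; the matching condition $|h'-1| < 1/100$ transfers the Birkhoff average on $x$'s orbit to $y$'s orbit with at most a constant multiplicative loss, giving $\bar{\lambda}(W_R(x,y)) \geq 9R/10$ and verifying (a) and (b).

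\textbf{Step 2 (Part (1): horizontal rigidity or jump).} Fix $w \in W_R(x,y)$ with $R_w^{-1} = L_H(w) < 2/m$. Writing $\Delta_i := x_{i,h}^w - y_{i,h}^{h(w)}$ and $k_i(s) := N_i(x_i^w, s-w) - N_i(y_i^{h(w)}, h(s) - h(w))$, the special-flow representation gives
\[
d_H(x_i^s, y_i^{h(s)}) = \|\Delta_i + k_i(s)\alpha_i\|.
\]
If $k_1(s) = k_2(s) = 0$ then \eqref{eq:HorizontalIsom} holds. Otherwise, the bounds $|h(s)-h(w)-(s-w)| \leq (s-w)/100$ and $w \in W_R$ force $|k_i(s)|$ to be strictly smaller than the ``dangerous'' denominators at scale $R_w$, so \eqref{eq:Ncontrol} pushes $\|k_i(s)\alpha_i\|$ above $100 R_w^{-1}$; combined with $|\Delta_i| \leq R_w^{-1}$, this produces \eqref{eq:badsituation}.

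\textbf{Step 3 (Part (2): differential shearing dichotomy).} Assuming \eqref{eq:HorizontalIsom} holds at $s$, a direct expansion using the special-flow representation gives
\[
d_V(x_i^s, y_i^{h(s)}) = \bigl|D_i + T(s) - F_i(s-w)\bigr|,
\]
where $D_i := x_{i,v}^w - y_{i,v}^{h(w)}$, the quantity $T(s) := (s-w) - (h(s)-h(w))$ is the single, coordinate-independent time-reparametrization error, and $F_i(t) := f_i^{(N_i)}(x_{i,h}^w) - f_i^{(N_i)}(y_{i,h}^{h(w)}) \approx t^{1+|\gamma_i|}\Delta_i$ by Lemma \ref{lm:derivativeControl}. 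The fact that $T$ is shared across coordinates while $F_1, F_2$ grow at genuinely different polynomial rates ($|\gamma_1| \neq |\gamma_2|$) means that $L(t) < 1/4$ in both coordinates forces $|F_1(t) - F_2(t)| < 1/2$, which can hold only on a short sub-interval. Splitting into two cases according as $|\Delta_1|$ or $|\Delta_2|$ realizes the maximum $R_w^{-1} = L_H(w)$ yields \eqref{eq:ControlInequal1} and \eqref{eq:ControlInequal2} respectively; the exponents $R_w^{1/(1+|\gamma_2|)+\epsilon_0}$ and $R_w^{1-\epsilon_0}$ arise precisely as the time-scales at which the sub-dominant shearing first reaches size $O(1)$, and the $\log^{-2}R_w$ improvement comes from a finer sub-partition of $B(w, L)$ into intervals on which $T$ is essentially affine.

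\textbf{Main obstacle.} Step 3 carries the bulk of the technical weight. One must quantify the measure of $\{t \in B(w, L) : L(t) < 1/4\}$ at the scale $L/\log^2 R_w$, which goes beyond the pointwise growth of $F_i - T$. The key device will be to chop $B(w, L)$ into sub-intervals on which $T - F_1 + D_1$ (respectively $T - F_2 + D_2$) is effectively monotone, using the monotonicity of $f_i'^{(N_i)}$ supplied by \eqref{eq:DenjoyKoksma2}--\eqref{eq:DenjoyKoksma3}, and to then bound how often each sub-interval meets the bad set. This combinatorial argument, sharpened through the $\epsilon_2$-refinement that enters the definition of $\epsilon_0$, is deferred to Section \ref{sec:proofOfThirdLemma}; it parallels and refines the good-matching analysis of \cite{KanigowskiWei1}, with the crucial new point being that the separation between the two competing time-scales $R_w^{1/(1+|\gamma_2|)+\epsilon_0}$ and $R_w^{1-\epsilon_0}$ must be genuinely used to produce the sharp lower bound of Theorem \ref{thm:Main}.
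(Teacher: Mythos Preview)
Your Steps 1 and 2 and the opening of Step 3 are essentially the paper's argument: the construction of $D$ and $W_R(x,y)$ via the ergodic theorem applied to the good set $F=\prod_i\bigl(S^i(n')\cap W^i(\delta)\cap\{f_i<\delta^{-3/(1-|\gamma_i|)}\}\bigr)$ is exactly what Section \ref{sec:proofOfThirdLemma} does, Part (1) is dispatched in the paper by citing Lemma~6.1 of \cite{KanigowskiWei1} along the lines you sketch, and your derivation of the key constraint $|F_1(t)-F_2(t)|<\tfrac12$ (the paper's \eqref{eq:BirkhoffDifference}--\eqref{eq:BirkhoffDifferenceDer}) is correct and central.

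The gap is in how you propose to finish Part (2). First, the correspondence you assert---that $|\Delta_1|=R_w^{-1}$ forces \eqref{eq:ControlInequal1} and $|\Delta_2|=R_w^{-1}$ forces \eqref{eq:ControlInequal2}---is not what is proved and is not obviously true: with $|\Delta_2|$ chosen adversarially one can make $F_1-F_2$ vanish near an intermediate scale, so a direct ``short sub-interval'' claim at a \emph{single} prescribed scale does not follow. Second, your Main Obstacle plan to chop $B(w,L)$ into pieces on which $T-F_i+D_i$ is monotone reintroduces the matching drift $T$, which you had just (correctly) eliminated by subtracting the two vertical constraints; $T$ is merely absolutely continuous with $|T'|<\tfrac{1}{100}$, so there is no mechanism to make it ``essentially affine'' on sub-intervals, and monotonicity of $T-F_i$ is uncontrollable. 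The paper avoids both issues by arguing \emph{by contradiction}: it assumes that \emph{both} \eqref{eq:ControlInequal1} and \eqref{eq:ControlInequal2} fail, so the good set has large measure at \emph{both} scales $R_w^{1/(1+|\gamma_2|)+\epsilon_0}$ and $R_w^{1-\epsilon_0}$. Intersecting with the set $G_T$ of Lemma \ref{lm:derivativeControl} produces specific times $t_1,t_2$ (Claim~\ref{claim}) at which \eqref{eq:BirkhoffDifferenceDer} holds with sharp derivative bounds. Rearranging \eqref{eq:BirkhoffDifferenceDer} at $t_1$ and at $t_2$ yields an upper and a lower bound for the \emph{same} quantity $\|x_{2,h}^w-y_{2,h}^{h(w)}\|$ (or $\|x_{1,h}^w-y_{1,h}^{h(w)}\|$, according to which coordinate realises $R_w^{-1}$), and these two bounds are incompatible precisely for the value \eqref{eq:ChoiceOfEpsilon0} of $\epsilon_0$. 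No monotonicity or sub-partitioning is needed; the two scales do all the work.
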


\begin{remark}
The meaning of Lemma \ref{lm:thirdMain} should be divided into two parts: the first part shows that for $x^w$ and $y^{h(w)}$, if $s$ is in a neighborhood of $w$, then either the horizontal distance between $x^s$ and $y^{h(s)}$ is same as horizontal distance between $x^w$ and $y^{h(w)}$ on each coordinates or the horizontal distance at time $s$ is extremely larger than the horizontal distance at time $w$, which leading to that $w$ and $s$ cannot stay in same $A_j^{R,m}(x,y)$. The second part of the Lemma \ref{lm:thirdMain} shows that for the set consisted of ``good'' matching time moments, i.e. have same horizontal distance as initial matching pair, its Lebesgue measure is always relative small with respect to Lebesgue measure of the full neighborhood.
\end{remark}

We will show how Lemma \ref{lm:thirdMain} give the proof of Lemma \ref{lm:secondMain} now.
\begin{proof}[Proof of Lemma \ref{lm:secondMain}]
Fix $\delta>0$, we only need to prove \textbf{(c)} of Lemma \ref{lm:secondMain} based on $(1)$ and $(2)$ of Lemma \ref{lm:thirdMain}.

If $\frac{m}{8}\geq R^{\frac{1}{1-\epsilon_0}}$, notice that for $j\leq\log \frac{m}{8}$, by \eqref{eq:AemptySet} we get $\bar{\lambda}(A_j^{R,m}(x,y))=0$ and thus the proof of Lemma \ref{lm:secondMain} is finished. From now on, we only need to deal with the situation $\log\frac{m}{8}<j\leq \frac{1}{1-\epsilon_0}\log R$.

For any $\log\frac{m}{8}<j\leq \frac{1}{1-\epsilon_0}\log R$, divide the interval $[0,R]$ into several disjoint intervals $I_1,\ldots,I_k$ of length $l_j^1:=\frac{(2^j)^{\frac{1}{1+|\gamma_2|}+\epsilon_0}}{\sqrt{\log2^j}}$ or $l_j^2:=\frac{(2^j)^{1-\epsilon_0}}{\sqrt{\log2^j}}$ by the following inductive procedure:
\begin{itemize}
  \item[1.] Fix the smallest element $w_1\in W_R(x,y)\cap A_j^{R,m}(x,y)$, if $w_1$ satisfies \eqref{eq:ControlInequal1} of Lemma \ref{lm:thirdMain}, let $I_1$ be an interval with left endpoint $w_1$ and length $l_j^1$; if $w_1$ satisfies \eqref{eq:ControlInequal2} of Lemma \ref{lm:thirdMain}, let $I_1$ be an interval with left endpoint $w_1$ with length $l_j^2$;
  \item[2.] Then inductively for $u>1$, let $w_u$ be the smallest element in $(W_R(x,y)\cap A_j^{R,m}(x,y))\setminus(I_1\cup\ldots\cup I_{u-1})$. Based on whether $w_u$ satisfies \eqref{eq:ControlInequal1} or \eqref{eq:ControlInequal2} of Lemma \ref{lm:thirdMain}, we let $I_u$ be the interval with left endpoint $w_u$ and length $l_j^1$ or $l_j^2$;
  \item[3.] We continue this procedure until we cover $W_R(x,y)\cap A_j^{R,m}(x,y)$ and define these intervals as $I_1,\ldots,I_k$.
\end{itemize}
Notice that since $2^j\leq R^{\frac{1}{1-\epsilon_0}}$ and $\frac{1}{1+|\gamma_2|}+\epsilon_0<1-\epsilon_0$, thus it follows $k>1$. Moreover, we have the following estimate of the number of the intervals based on definition:
\begin{equation}\label{eq:NumberOfIntervals}
\operatorname{Card}(\{i\in\{1,\ldots,k\}:\bar{\lambda}(I_i)=l_j^s\})\leq\frac{R}{l_j^s}+2 \text{ for }s=1,2.
\end{equation}

Recall the definition of $A_j^{R,m}(x,y)$ and  $R_{w_i}^{-1}=L_H(w_i)\in(2^{-j-1},2^{-j}]$, we have $$\frac{R_{w_i}}{\log^5R_{w_i}}\geq\bar{\lambda}(I_i)$$ for every $i\in\{1,\ldots,k\}$ (as $j>\log\frac{m}{8}$ and we can pick $m_\delta$ large enough). Thus by $(1)$ of Lemma \ref{lm:thirdMain} and the definition of $A_j^{R,m}(x,y)$ we obtain
\begin{equation}\label{eq:CombinoticsContain}
A_j^{R,m}(x,y)\cap W_R(x,y)\cap I_i\subset\{t\in I_i:\eqref{eq:HorizontalIsom}\text{ holds, }L(t)<\frac{2}{m}\}.
\end{equation}
It is worth to point that on the right side of \eqref{eq:CombinoticsContain} the reason that we do not need to consider the situation that \eqref{eq:badsituation} holds is that definition of $A_j^{R,m}(x,y)$ forbids this.

By $(2)$ of Lemma \ref{lm:thirdMain}, we have one of the following holds:
\begin{equation}\label{eq:IntervalControl1}
\bar{\lambda}(\{t\in I_i:\eqref{eq:HorizontalIsom}\text{ holds, }L(t)<\frac{2}{m}\})\leq\frac{(2^j)^{\frac{1}{1+|\gamma_2|}+\epsilon_0}}{\log^22^{j+1}},
\end{equation}
or
\begin{equation}\label{eq:IntervalControl2}
\bar{\lambda}(\{t\in I_i:\eqref{eq:HorizontalIsom}\text{ holds, }L(t)<\frac{2}{m}\})\leq\frac{(2^j)^{1-\epsilon_0}}{\log^22^{j+1}}.
\end{equation}

Now summing over $i\in\{1,\ldots,k\}$ and combining with \eqref{eq:NumberOfIntervals}, \eqref{eq:CombinoticsContain}, \eqref{eq:IntervalControl1} and \eqref{eq:IntervalControl2}, we obtain
\begin{equation}\label{eq:finalIntervalControl}
\begin{aligned}
\bar{\lambda}(A_j^{R,m}(x,y)\cap W_R(x,y))&\leq\frac{R}{l_j^2}\frac{(2^j)^{1-\epsilon_0}}{\log^22^{j+1}}+\frac{R}{l_j^1}\frac{(2^j)^{\frac{1}{1+|\gamma_2|}+\epsilon_0}}{\log^22^{j+1}}\leq\frac{R}{\log^{3/2}2^j}+\frac{R}{\log^{3/2}2^j}\\
&\leq\frac{R}{j^{1.4}},
\end{aligned}
\end{equation}
where the second inequality follows by the definition of $l_j^1$ and $l_j^2$ and last inequality due to $j>\log\frac{m}{8}$ and we can pick $m_{\delta}$ large enough. Thus \eqref{eq:finalIntervalControl} finish the proof of Lemma \ref{lm:secondMain}.

\end{proof}
\section{Proof of Lemma \ref{lm:thirdMain}}\label{sec:proofOfThirdLemma}

For $\delta>0$, let $n'$ as in Lemma \ref{lm:derivativeControl}, define set $F$ as:
\begin{equation}
F=F_{\delta}=\prod_{i=1,2}(S^i(n')\cap W^i(\delta)\cap\{x_i\in\mathbb{T}^{f_i}:f_i(x_{i,h})<\delta^{-\frac{3}{1-|\gamma_i|}}\}).
\end{equation}
It follows from Lemma \ref{lm:derivativeControl} that $\tilde{\mu}(F)\geq1-\delta^2$.

\emph{Construction of $D_{\delta}$, $W_R(x,y)$, $R_{\delta}$ and $m_{\delta}$:} By applying ergodic theorem to $\phi_t$ and set $F$, we know that there exists a set $D_{\delta}\subset\mathbb{T}^{f_1}\times\mathbb{T}^{f_2}$ with $\tilde{\mu}(D_{\delta})>1-\delta$ and $R_{\delta}\in\mathbb{R}$ such that for all $x\in D_{\delta}$ and $R\geq R_{\delta}$ we have
$$\bar{\lambda}(\{t\in[0,R]:x^t\in F\})\geq(1-\delta)R.$$

For $x\in D_{\delta}$ define $W_R(x)$ as follows:
\begin{equation}\label{eq:WRxy}
W_R(x)=\{t\in[0,R]:x^t\in F\}.
\end{equation}
By taking $\delta$ small enough, we obtain $\bar{\lambda}(W_R(x))\geq\frac{99R}{100}$ and
\begin{equation}\label{eq:Kdelta}
x^t\in K_{\delta^{-1}}^1\times K_{\delta^{-1}}^2 \text{ for every }t\in W_R(x).
\end{equation}
Then define $W_R(x,y)$ as following:
\begin{equation}
W_R(x,y)=W_R(x)\cap W_R(y)\cap A(x,y).
\end{equation}

Now the Lemma \ref{lm:secondMain} \textbf{(a)} and \textbf{(b)} follow from the definition of $D_{\delta}$ and definition of $W_R(x,y)$ by defining $m_{\delta}=\delta^{-1}$. Thus the only remaining parts of Lemma \ref{lm:thirdMain} is $(1)$ and $(2)$.

In fact, Lemma \ref{lm:thirdMain} $(1)$ follows from the Lemma $6.1$ of \cite{KanigowskiWei1} by setting $t=s-w$ and $z=x_1$, $z'=x_2$.
\subsection{Proof of (2) of Lemma \ref{lm:thirdMain}}\label{sec:SharpEstimate}
Fix $w\in A(x,y)$, we claim that for every $t$ such that \eqref{eq:HorizontalIsom} holds and $L(t)<\frac{1}{4}$, we have
\begin{equation}\label{eq:BirkhoffDifference}
|(f_1^{(N_1(x_1,t-w))}(x_{1,h})-f_1^{(N_1(x_1,t-w))}(y_{1,h}))-(f_2^{(N_2(x_2,t-w))}(x_{2,h})-f_2^{(N_2(x_2,t-w))}(y_{2,h}))|\leq\frac{1}{2}.
\end{equation}

In fact, it follows from \eqref{eq:HorizontalIsom} that the action of $(\mathscr{T}^{\alpha_1,\gamma_1}\times\mathscr{T}^{\alpha_2,\gamma_2})_t$ on the circle $\mathbb{T}$ is isometric, thus by the definition of special flow and $t,w\in A(x,y)$, we have for $i=1,2$:
\begin{equation}\label{eq:BirkhoffDifferenceExp}
|[(t-w)-f_i^{(N_i(x_i,t-w))}(x_{i,h})]-[(h(t)-h(w))-f_i^{(N_i(x_i,t-w))}(y_{i,h})]|<\frac{1}{4}.
\end{equation}
Then \eqref{eq:BirkhoffDifference} follows from \eqref{eq:BirkhoffDifferenceExp} and triangle equality.

Moreover, notice $w\in W_R(x,y)$ implies $x^w\in F$ and thus $x_i^w\in S_i(n')$ for $i=1,2$. Then we obtain that $f_i^{(N_i(x_i,t-w))}$ is differentiable on $[x_{i,h},y_{i,h}]$ by Lemma \ref{lm:differentialF} for $i=1,2$. Thus \eqref{eq:BirkhoffDifference} is equivalent to
\begin{equation}\label{eq:BirkhoffDifferenceDer}
|f_1'^{(N_1(x_1,t-w))}(\theta_{1})(x_{1,h}-y_{1,h})-f_2'^{(N_2(x_2,t-w))}(\theta_2)(x_{2,h}-y_{2,h})|\leq\frac{1}{2},
\end{equation}
where $\theta_1\in[x_{1,h},y_{1,h}]$ and $\theta_2\in[x_{2,h},y_{2,h}]$.

From now on, we will assume that Lemma \ref{lm:thirdMain} $(2)$ does not hold. For any $\epsilon_2\in(0,\frac{|\gamma_2|(|\gamma_1|-|\gamma_2|)}{16(2+|\gamma_2|)})$, we apply Lemma \ref{lm:derivativeControl} to $x^w$ with time interval $[0,R_w]$ and recall
\begin{equation}\label{eq:ChoiceOfEpsilon0}
\epsilon_0=\frac{|\gamma_2|}{2(1+|\gamma_2|)}-4\epsilon_2\frac{2+|\gamma_2|}{(|\gamma_1|-|\gamma_2|)(1+|\gamma_2|)}.
\end{equation}

The next claim is crucial to complete our proof, we postpone its proof to the end of the section:
\begin{claim}\label{claim}
There exists $t_1,t_2>w$ such that:
\begin{itemize}
  \item[(i).] $t_1\in B(w,R_w^{\frac{1}{1+|\gamma_2|}+\epsilon_0})\setminus B(w,\frac{1}{2}R_w^{\frac{1}{1+|\gamma_2|}+\epsilon_0}\log^{-2} R_w)$ and $t_1-w\in G_{R_w^{1/(1+|\gamma_2|)+\epsilon_0}}$, where $G_{R_w^{1/(1+|\gamma_2|)+\epsilon_0}}$ is defined in Lemma \ref{lm:derivativeControl};
  \item[(ii).] $t_2\in B(w,R_w^{1-\epsilon_0})\setminus B(w,\frac{1}{2}R_w^{1-\epsilon_0}\log^{-2} R_w)$ and $t_2-w\in G_{R_w^{1-\epsilon_0}}$.
\end{itemize}
\end{claim}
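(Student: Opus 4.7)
The plan is to prove the Claim via a direct measure-counting argument. Since the Claim asserts existence of two times in prescribed annular regions and in the derivative-control sets, and the hypothesis (failure of Lemma~\ref{lm:thirdMain}~(2)) provides quantitative lower bounds on the measure of ``good matching'' times, it will suffice to show that these good matching times cannot all lie either in the corresponding inner balls or outside $G_{T_i}$.

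First I would apply Lemma~\ref{lm:derivativeControl} to the point $x^w$ at the two time scales $T_1:=R_w^{1/(1+|\gamma_2|)+\epsilon_0}$ and $T_2:=R_w^{1-\epsilon_0}$, obtaining sets $G_{T_i}\subset[0,T_i]$ with $\bar\lambda(G_{T_i})\geq T_i(1-4\log^{-3}T_i)$. This is legitimate because $w\in W_R(x,y)$ together with the definitions of $W_R(x,y)$ and $F$ force $x^w\in(S^1(n')\cap W^1(\delta))\times(S^2(n')\cap W^2(\delta))$. Next I would invoke the assumption that Lemma~\ref{lm:thirdMain}~(2) fails, so that both \eqref{eq:ControlInequal1} and \eqref{eq:ControlInequal2} fail, giving $\bar\lambda(E_i)\geq T_i/\log^2 R_w$ for $i=1,2$, where
\[
E_i:=\bigl\{t\in B(w,T_i):\eqref{eq:HorizontalIsom}\text{ holds and }L(t)<\tfrac{1}{4}\bigr\}.
\]

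The core step is then a short measure-counting calculation. For each $i$, the inner ball $B(w,\tfrac{1}{2}T_i\log^{-2}R_w)$ has Lebesgue measure at most $\tfrac{1}{2}T_i\log^{-2}R_w$, so the portion of $E_i$ lying in the annulus $B(w,T_i)\setminus B(w,\tfrac{1}{2}T_i\log^{-2}R_w)$ has measure at least $\tfrac{1}{2}T_i/\log^2 R_w$. Removing from this annular set those $t$ with $t-w\notin G_{T_i}$ costs at most $4T_i/\log^3 T_i$. Since $\log T_1=(\tfrac{1}{1+|\gamma_2|}+\epsilon_0)\log R_w$ and $\log T_2=(1-\epsilon_0)\log R_w$ are both positive constant multiples of $\log R_w$, the loss $4T_i/\log^3 T_i$ is asymptotically negligible relative to $\tfrac{1}{2}T_i/\log^2 R_w$, so for $R_w\geq R_\delta$ sufficiently large the remaining set is non-empty. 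Any element of this set yields a time $t_i$ with the required annular and derivative-control properties, and the annulus condition automatically forces $t_i>w$.

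The main obstacle is just bookkeeping: one must track three logarithmic thresholds (the radius of the outer ball, the radius of the inner ball, and the measure of $[0,T_i]\setminus G_{T_i}$) and confirm they interact favourably. The crucial positive outputs $c_1=\tfrac{1}{1+|\gamma_2|}+\epsilon_0>0$ and $c_2=1-\epsilon_0>0$ follow immediately from the choice of $\epsilon_0$ in \eqref{eq:ChoiceOfEpsilon0} together with the restriction $\epsilon_2\in(0,\tfrac{|\gamma_2|(|\gamma_1|-|\gamma_2|)}{16(2+|\gamma_2|)})$, so that enlarging $R_\delta$ absorbs any remaining constants.
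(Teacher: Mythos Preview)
Your proposal is correct and follows essentially the same measure-counting argument as the paper: use the failure of \eqref{eq:ControlInequal1} and \eqref{eq:ControlInequal2} to get a lower bound $T_i/\log^2 R_w$ on the good matching set, subtract the inner ball of radius $\tfrac{1}{2}T_i\log^{-2}R_w$, then subtract the complement of $G_{T_i}$ (of size $\leq 4T_i\log^{-3}T_i$), and observe the result is non-empty since $\log T_i$ is a fixed positive multiple of $\log R_w$. The paper's write-up is slightly more compressed (it passes directly to the annulus $\mathscr{B}_w$ after bounding its measure via $E_i$), but the logic and the key inequality $\tfrac{1}{2}T_i\log^{-2}R_w\gg 4T_i\log^{-3}T_i$ are identical.
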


Recall that $R_w^{-1}=\max\{d_H(x_1^w,y_1^{h(w)}),d_H(x_2^{w},y_2^{h(w)})\}$, if $R_w^{-1}=d_H(x_1^w,y_1^{h(w)})$, by \eqref{eq:BirkhoffDifferenceDer} and Claim \ref{claim}, we have:
\begin{equation}\label{eq:contradict1}
\begin{aligned}
\frac{|f_1'^{(N_1(x_1^w,t_2-w))}(\theta_1)|\|x_{1,h}^w-y_{1,h}^{h(w)}\|-\frac{1}{2}}{|f_2'^{(N_2(x_2^w,t_2-w))}(\theta_2)|}&\leq\|x_{2,h}^{w}-y_{2,h}^{h(w)}\|\\&\leq\frac{|f_1'^{(N_1(x_1^w,t_1-w))}(\theta_1)|\|x_{1,h}^w-y_{1,h}^{h(w)}\|+\frac{1}{2}}{|f_2'^{(N_2(x_2^w,t_1-w))}(\theta_2)|}.
\end{aligned}
\end{equation}
By applying Lemma \ref{lm:derivativeControl} and Claim \ref{claim} $(i)$ for $x_1^w$, $x_2^w$ and $t_1-w$; then we apply Lemma \ref{lm:derivativeControl} and Claim \ref{claim} $(ii)$ for $x_1^w$, $x_2^w$, $t_2-w$, \eqref{eq:contradict1} imply:
\begin{equation}\label{eq:middleContradicts1}
\frac{\frac{1}{(2\log^2 R_w)^{1+|\gamma_1|-\epsilon_2}}R_w^{(1-\epsilon_0)(1+|\gamma_1|-\epsilon_2)-1}-\frac{1}{2}}{R_w^{(1-\epsilon_0)(1+|\gamma_2|+\epsilon_2)}}\leq\frac{R_w^{(\frac{1}{1+|\gamma_2|}+\epsilon_0)(1+|\gamma_1|+\epsilon_2)-1}+\frac{1}{2}}{\frac{1}{(2\log^2R_w)^{1+|\gamma_2|-\epsilon_2}}
R_w^{(\frac{1}{1+|\gamma_2|}+\epsilon_0)(1+|\gamma_2|-\epsilon_2)}}.
\end{equation}

By enlarging $m_\delta$ if necessary, we can guarantee that $R_w$ large enough and thus \eqref{eq:middleContradicts1} implies:
\begin{equation}\label{eq:restrictionChoice1}
\frac{R_w^{(1-\epsilon_0)(1+|\gamma_1|-2\epsilon_2)-1}-\frac{1}{2}}{R_w^{(1-\epsilon_0)(1+|\gamma_2|+\epsilon_2)}}\leq\frac{R_w^{(\frac{1}{1+|\gamma_2|}+\epsilon_0)(1+|\gamma_1|+\epsilon_2)-1}+\frac{1}{2}}{
R_w^{(\frac{1}{1+|\gamma_2|}+\epsilon_0)(1+|\gamma_2|-2\epsilon_2)}},
\end{equation}
which contradicts to the choice of $\epsilon_0$ in \eqref{eq:ChoiceOfEpsilon0} and thus we finish the proof in this case.

If $R_w^{-1}=d_H(x_2^w,y_2^{h(w)})$, then by \eqref{eq:BirkhoffDifferenceDer} and Claim \ref{claim}, we have:
\begin{equation}\label{eq:contradict2}
\begin{aligned}
\frac{|f_2'^{(N_2(x_2^w,t_1-w))}(\theta_2)|\|x_{2,h}^w-y_{2,h}^{h(w)}\|-\frac{1}{2}}{|f_1'^{(N_1(x_1^w,t_1-w))}(\theta_1)|}&\leq\|x^w_{1,h}-y^{h(w)}_{1,h}\|\\&\leq\frac{|f_2'^{(N_2(x^w_2,t_2-w))}(\theta_2)|\|x_{2,h}^w-y_{2,h}^{h(w)}\|+\frac{1}{2}}{|f_1'^{(N_1(x_1^w,t_2-w))}(\theta_1)|}.
\end{aligned}
\end{equation}
Again, by applying Lemma \ref{lm:derivativeControl} and Claim \ref{claim} $(i)$ for $x_1^w$, $x_2^w$ and $t_1-w$; then we apply Lemma \ref{lm:derivativeControl} and Claim \ref{claim} $(ii)$ for $x_1^w$, $x_2^w$, $t_2-w$, \eqref{eq:contradict2} imply:
\begin{equation}\label{eq:middleContradicts2}
\frac{\frac{1}{(2\log^2 R_w)^{1+|\gamma_2|-\epsilon_2}}R_w^{(\frac{1}{1+|\gamma_2|}+\epsilon_0)(1+|\gamma_2|-\epsilon_2)-1}-\frac{1}{2}}{R_w^{(\frac{1}{1+|\gamma_2|}+\epsilon_0)(1+|\gamma_1|+\epsilon_2)}}\leq\frac{R_w^{(1-\epsilon_0)(1+|\gamma_2|+\epsilon_2)-1}+\frac{1}{2}}{\frac{1}{(2\log^2R_w)^{1+|\gamma_1|-\epsilon_2}}R_w^{(1-\epsilon_0)(1+|\gamma_1|-\epsilon_2)}}.
\end{equation}

By enlarging $m_\delta$ if necessary, we can guarantee that $R_w$ large enough and thus \eqref{eq:middleContradicts2} implies:
\begin{equation}\label{eq:restrictionChoice2}
\frac{R_w^{(\frac{1}{1+|\gamma_2|}+\epsilon_0)(1+|\gamma_2|-2\epsilon_2)-1}-\frac{1}{2}}{R_w^{(\frac{1}{1+|\gamma_2|}+\epsilon_0)(1+|\gamma_1|+\epsilon_2)}}\leq\frac{R_w^{(1-\epsilon_0)(1+|\gamma_2|+\epsilon_2)-1}+\frac{1}{2}}{R_w^{(1-\epsilon_0)(1+|\gamma_1|-2\epsilon_2)}},
\end{equation}
which contradicts to the choice of $\epsilon_0$ in \eqref{eq:ChoiceOfEpsilon0} and thus we finish the proof of Lemma \ref{lm:thirdMain} $(2)$.

\begin{proof}[Proof of Claim \ref{claim}]
We will prove $(i)$ of the Claim \ref{claim} and $(ii)$ will follow the same lines. As we assume that $(2)$ of Lemma \ref{lm:thirdMain} does not hold, thus we know that \eqref{eq:ControlInequal1} does not hold. Thus for $\mathscr{B}_w=B(w,R_w^{\frac{1}{1+|\gamma_2|}+\epsilon_0})\setminus B(w,\frac{1}{2}R_w^{\frac{1}{1+|\gamma_2|}+\epsilon_0}\log^{-2} R_w)$, we have
\begin{equation}\label{eq:measureOfGood1}
\bar{\lambda}(\mathscr{B}_w)\geq\frac{1}{2}R_w^{\frac{1}{1+|\gamma_2|}+\epsilon_0}\log^{-2} R_w.
\end{equation}

Let $S_w=R_w^{\frac{1}{1+|\gamma_2|}+\epsilon_0}$ and by using Lemma \ref{lm:derivativeControl} for $\epsilon_2$, $S_w$ and $x^w$, we obtain that for $G_{S_w}\subset[0,S_w]$: \begin{equation}\label{eq:measureOfGood2}
\bar{\lambda}(G_{S_w})\geq S_w(1-4\log^{-3} S_w).
\end{equation}

Combining \eqref{eq:measureOfGood1}, \eqref{eq:measureOfGood2},  $\frac{1}{2}R_w^{\frac{1}{1+|\gamma_2|}+\epsilon_0}\log^{-2} R_w\gg4S_w\log^{-3} S_w$ and $$\{w+t:t\in G_{S_w}\}\subset[w,w+S_w], \mathscr{B}_w\subset[w,w+S_w],$$ we obtain that
\begin{equation}
\mathscr{B}_w\cap \{w+t:t\in G_{S_w}\}\neq\emptyset.
\end{equation}

Finally by picking $t_1$ as any element belongs to $\mathscr{B}_w\cap \{w+t:t\in G_{S_w}\}$ and this gives the proof of Claim \ref{claim} $(i)$.
\end{proof}





\end{document}